\providecommand{\keywords}[1]{\small \quad \quad \textbf{Keywords:} #1}
\tikzstyle{vertex}=[draw,circle,minimum size=18pt,inner sep=0pt]
\renewcommand{\citep}[1]{\hspace{-10em}\citep{#1}}
\RenewDocumentCommand\citep{ s O{} O{} m }{%
	\unskip\nobreak\hspace{0.15em}%
	\IfBooleanTF{#1}
	{(\citet*[#2][#3]{#4})}%
	{(\citet[#2][#3]{#4})}%
}
\DeclareMathOperator{\conv}{conv}
\newcommand{\R}{\mathbb{R}}
\newcommand{\X}{\mathcal{X}}
\newcommand{\Z}{\mathbb{Z}}
\newcommand{\D}{\delta}
\newcommand{\EPI}{EPI\xspace}
\newcommand{\EPIs}{EPIs\xspace}
\newcommand{\LEPI}{LEPI\xspace}
\newcommand{\LEPIs}{LEPIs\xspace}
\newcommand{\EF}{\texttt{KE}\xspace}
\newcommand{\EPIF}{\texttt{B\&C+EPI}\xspace}
\newcommand{\LEPIF}{\texttt{B\&C+LEPI}\xspace}
\newcommand{\GUB}{GUB\xspace}
\newcommand{\LP}{{LP}\xspace}
\newcommand{\MILP}{{MILP}\xspace}
\newcommand{\MINLP}{{MINLP}\xspace}
\newcommand{\SOCP}{{SOCP}\xspace}
\newcommand{\cGCMCLP}{MPCLP\xspace}
\newcommand{\MCLP}{MCLP\xspace}
\newcommand{\MPKPG}{MPKP-G\xspace}
\newcommand{\ISP}{ISP\xspace}
\newcommand{\CPX}{\texttt{CPX}\xspace}
\newcommand{\CPXEPI}{\texttt{CPX+EPI}'\xspace}
\newcommand{\CPXLEPI}{\texttt{CPX+LEPI}'\xspace}
\newcommand{\CO}{\mathcal{O}}
\newcommand\polymake{\texttt{polymake}\xspace}
\theoremstyle{plain}
\newtheorem{Theorem}{Theorem}[section]
\newtheorem{Corollary}[Theorem]{Corollary}
\newtheorem{Lemma}[Theorem]{Lemma}
\newtheorem{Remark}[Theorem]{Remark}
\newtheorem{Example}[Theorem]{Example}
\newtheorem{Definition}[Theorem]{Definition}
\newtheorem{Proposition}[Theorem]{Proposition}
\crefname{Theorem}{Theorem}{Theorems}
\crefname{Example}{Example}{Examples}
\crefname{Observation}{Observation}{Observations}
\crefname{Remark}{Remark}{Remarks}
\crefname{Proposition}{Proposition}{Propositions}
\crefname{Lemma}{Lemma}{Lemmas}
\crefname{Corollary}{Corollary}{Corollaries}
\crefname{subsection}{Section}{Sections}
\crefname{algorithm}{Algorithm}{Algorithms}
\crefname{figure}{Figure}{Figures}
\crefname{table}{Table}{Tables}
\crefname{section}{Section}{Sections}
\title{Polyhedral results for two classes of submodular sets with GUB constraints}
\author[a]{Weikang Qian}
\author[b]{Keyan Li}
\author[b]{Wei-Kun Chen}
\author[c,d]{Yu-Hong Dai}
\affil[a]{\small School of Mathematical Sciences, University of Science and Technology of China, Hefei 230026, China\\\textit{wk220@mail.ustc.edu.cn}}
\affil[b]{\small School of Mathematics and Statistics, Beijing Institute of Technology, Beijing 100081, China\\\textit{\{likeyan,chenweikun\}@bit.edu.cn}}
\affil[c]{\small Academy of Mathematics and Systems Science, Chinese Academy of Sciences, Beijing 100190, China}
\affil[d]{\small School of Mathematical Sciences, University of Chinese Academy of Sciences, Beijing 100049, China\\\textit{dyh@lsec.cc.ac.cn}}
\date{\small \today}
\begin{document}

\maketitle

\begin{abstract}
	In this paper, we investigate the polyhedral structure of two submodular sets with generalized upper bound (\GUB) constraints, which arise as important substructures in various real-world applications.
	We derive a class of strong valid inequalities for the two sets using sequential lifting techniques.
	The proposed lifted inequalities are facet-defining for the convex hulls of two sets and are stronger than the well-known extended polymatroid inequalities (\EPIs).
	We provide a more compact characterization of these inequalities and show that each of them can be computed in linear time.
	Moreover, the proposed lifted inequalities, together with bound and \GUB constraints, can completely characterize the convex hulls of the two sets, and can be separated using a combinatorial polynomial-time algorithm. 
	Finally, computational results on probabilistic covering location and multiple probabilistic knapsack problems demonstrate the superiority of the proposed lifted inequalities over the \EPIs within a branch-and-cut framework.
	\vspace{8pt} \\
	\keywords{Submodular set $\cdot$ \GUB constraints $\cdot$ Sequential lifting $\cdot$ Polyhedral approach $\cdot$ Branch-and-cut}
\end{abstract}

\section{Introduction}\label{section1}
In this paper, we investigate the polyhedral structure of the epigraph of a concave function composed with a non-negative linear function under generalized upper bound (\GUB) constraints:
\begin{equation}\label{setX_0}
	X_0=\left\{ (w,x) \in \R \times\{0,1\}^{n}\,:\, 
	w \geq f(a^\top x),~ \sum_{i \in N_k} x_i \leq 1,~ \forall~ k\in K \right\},
\end{equation}
and its generalization with an additional linear function in the nonlinear constraint:
\begin{equation}\label{setX}
	X = \left\{ (w,x) \in \R \times\{0,1\}^{n}\,:\, 
	w \geq f(a^\top x) + b^\top x,~ \sum_{i \in N_k} x_i \leq 1,~ \forall~ k\in K \right\},
\end{equation}
where $f \,:\, \R \rightarrow \R$ is a concave function, 
$a\in \R^n_+$, $b\in \R^n$, 
and $\{N_k\}_{k\in K}$ is a partition of $[n]$; that is, $\bigcup_{k\in K} N_k = [n]$ and $N_{k_1}\cap N_{k_2} = \varnothing$ for any distinct $k_1, k_2\in K$ (throughout, for a non-negative integer $\tau$, we denote $[\tau]:=\{1,2,\dots,\tau\}$ with the convention that $[0]=\varnothing$).

The two sets $X_0$ and $X$ arise as important substructures in many real-world applications such as facility location \citep{Feldman1966, Hajiaghayi2003, Karatas2021}, probabilistic knapsack \citep{Zhang2018, Joung2020, Atamturk2013}, and approximate submodular minimization \citep{Goemans2009}.
In the following, we present two applications---the probabilistic covering location and multiple probabilistic knapsack problems---in detail.

\subsection{Minimum probabilistic covering location problem}
Given a set of customers $I$ with weight $v_i$ for each $i\in I$ and a set of candidate facility locations $J$, the minimal covering location problem (\MCLP) attempts to locate a fixed number of ``undesirable'' facilities (such as waste treatment plants and nuclear power stations) while
minimizing the total weight of covered nodes \citep{Church1976, Church2022, Murray1998}.
\citet{Karatas2021} investigated a probabilistic variant of the \MCLP, the minimum probabilistic covering location problem (\cGCMCLP), which incorporates two key practical factors.
First, the customers are covered in a probabilistic behavior rather than a deterministic behavior (in the \MCLP). 
Second, in addition to determining the locations of open facilities, their types must also be determined.
Let $x_{js}\in \{0,1\}$ be a binary variable denoting whether a facility of type $s \in S$ is located at location $j \in J$. Then,  
the \cGCMCLP can be formulated as:
\begin{equation}\label{cGC-MCLP}\tag{MPCLP}
	\!\!\!\!\min_{x\in \{0,1\}^{|J|\times |S|}}\left\{\sum_{i\in I}v_i \left(1-\prod_{j\in J}\prod_{s\in S} (1-p_{ijs}x_{js}) \right) \, : \,\sum_{j\in J}\sum_{s\in S} c_sx_{js}\geq t,~\sum_{s\in S}x_{js}\leq 1, ~\forall~ j\in J\right\},
\end{equation}
where  $p_{ijs}\in [0,1]$ denotes the probability that customer $i$ is covered by a facility of type $s$ located at $j$;
$1-\prod_{j\in J}\prod_{s\in S}(1-p_{ijs}x_{js})$  denotes the probability that customer $i$ is covered by at least one facility \citep{Drezner1997, Berman2003, Drezner2008, Karatas2017}; 
the objective function minimizes the expected total weight of covered customers;
$c_{s}$ denotes the service capacity of a facility of type $s$;
constraint $\sum_{j\in J}\sum_{s\in S} c_sx_{js}\geq t$ requires that the total service capacity of the open facilities is larger than or equal to a predetermined threshold $t$;
and constraint $\sum_{s\in S}x_{js}\leq 1$ enforces that at most one type $s \in S$ of facility is opened at each location $j \in J$.

Let any customer $i\in I$ be given and  
assume that $p_{ijs}\in [0,1)$ holds for all $j\in J$ and $s\in S$.
From $x_{j s} \in \{0, 1\}$, we obtain $(1-p_{ijs}x_{js})=(1-p_{ijs})^{x_{js}}=\text{exp }([\ln(1-p_{ijs})]x_{js})$.
By introducing a continuous variable $w_i$ to represent the nonlinear term corresponding to customer $i$ in the objective function of problem \eqref{cGC-MCLP} (i.e., $w_i \geq -\prod_{j\in J}\prod_{s\in S} (1-p_{ijs}x_{js}) $), we obtain a substructure taking the form of $X_0$: 
\begin{equation}\label{substructureX0i}
	\!\!\!\! X^i_0=\left\{(w_i,x)\in \R \times\{0,1\}^{|J|\times|S|} \,:\, w_i\geq f\left(\sum_{j\in J}\sum_{s\in S}[-\ln (1-p_{ijs})]x_{js}\right),
	~\sum_{s\in S}x_{js}\leq 1, ~\forall~ j\in J~\right\},
\end{equation}
where $f(z):=-\text{exp }(-z)$ is a concave function and $-\ln (1-p_{ijs})\geq 0$ (as $p_{ijs}\in [0,1)$).
Note that if $p_{ijs}=1$ holds for some $(j,s)\in J \times S$, then the nonlinear constraint $w_i \geq -\prod_{j\in J}\prod_{s\in S} (1-p_{ijs}x_{js}) $ can be equivalently represented as $w_i\geq f\left(\sum\limits_{(j,s)\in J \times S~\text{with}~p_{ijs} < 1}[-\ln (1-p_{ijs})]x_{js}\right)$ and $w_i\geq x_{js}-1$ for $(j,s)\in J\times S$ with $p_{ijs}=1$ \citep{Karatas2021}, and therefore, we can still obtain a  substructure taking the form of $X_0$:
\begin{equation}\label{substructurebarX0i}
\begin{aligned}
	&\bar{X}^i_0=\left\{(w_i,x)\in \R \times\{0,1\}^{|J|\times|S|} \,:\, w_i\geq f\left(\sum\limits_{(j,s)\in J \times S~\text{with}~p_{ijs} < 1}[-\ln (1-p_{ijs})]x_{js}\right),\right.\\
	&\hspace{11cm}\left. \sum_{s\in S}x_{js}\leq 1, ~\forall~ j\in J~\right\}.
\end{aligned}
\end{equation}

\subsection{The multiple probabilistic knapsack problem with \GUB constraints}
Given a finite index set $N$, a probabilistic knapsack constraint on a binary vector $x\in\{0,1\}^{|N|}$ is defined as 
\begin{equation}\label{probconstraint}
	\mathbb{P}\left(\sum_{i\in N}\tilde{a}_ix_i\leq b\right)\geq \rho,
\end{equation}
where $\tilde{a}$ is an $|N|$-dimensional random vector, $b\in \R$, and $\rho\in (0,1)$ is a reliability level chosen by the decision-maker.
Such a constraint arises in diverse industrial applications including healthcare \citep{Deng2016, Wang2017, Zhang2018}, transportation \citep{Dinh2018}, and scheduling \citep{Cohen2019, Lu2021}.
Under the realistic assumptions that $\{\tilde{a}_i\}_{i \in N}$ are independent normally distributed with mean $a_i$ and variance $\sigma_i^2$ for each $i \in N$, and $\rho >0.5$ \citep{Goyal2010, Han2016, Joung2017, Joung2020}, 
the probabilistic knapsack constraint \eqref{probconstraint} can be equivalently reformulated as $\sum_{i\in N}a_ix_i+\Phi^{-1}(\rho)\sqrt{\sum_{i\in N}\sigma_{i}^2x_i^2}\leq b$ \citep{Boyd2004}.
Here $\Phi^{-1}$ is the quantile function of the standard normal cumulative distribution, and $\Phi^{-1}(\rho)>0$ (as $\rho>0.5$).
\citet{Atamturk2013} considered the multiple probabilistic knapsack problem with \GUB constraints, denoted as \MPKPG:
\begin{equation}\label{problemSOCP}\tag{\MPKPG}
	\max_{x\in \{0,1\}^{|N|}}\left\{\sum_{i\in N}c_ix_i\,:\,
	\sum_{i\in N}a_{im}x_i +\Phi^{-1}(\rho)\sqrt{\sum_{i\in N}
		\sigma_{im}^2x_i^2}\leq b_m, ~\forall~ m\in M,
    \sum_{i\in Q_k} x_i\leq 1, ~\forall~ k\in K \right\},
\end{equation}
where $c\in \R^{|N|}$, $a\in \R^{|N|\times |M|}_+$, $\sigma\in \R^{|N|\times |M|}_+$, $b\in \R^{|M|}_+$, and $\{Q_k\}_{k\in K}$ is a partition of $N$.
Define $f(z):=\Phi^{-1}(\rho)\sqrt{z}$, which is a concave function.
As $x_i=x_i^2$ holds for $x_i\in\{0,1\}$, for each $m\in M$, the nonlinear constraint  in problem \eqref{problemSOCP} can be rewritten as 
$\sum_{i\in N}a_{im}x_i +f(\sum_{i\in N}\sigma_{im}^2x_i)\leq b_m$.
By introducing a continuous variable $w_m\in \R$ to represent the right-hand side (i.e., $w_m = b_m$), we have $\sum_{i\in N}a_{im}x_i +f(\sum_{i\in N} \sigma_{im}^2x_i)\leq w_m$, thereby obtaining a substructure taking the form of $X$: 
\begin{equation}\label{substructureXm}
	X^m = \left\{ (w,x) \in \mathbb{R}\times\{0,1\}^{|N|} \, : \, \sum_{i\in N}a_{im}x_i +f\left(\sum_{i\in N}
	\sigma_{im}^2x_i \right)\leq w_m, ~\sum_{i\in Q_k} x_i\leq 1, ~\forall~ k\in K \right\}.
\end{equation}

\subsection{Relevant literature}
It is known that (i) the composition of a non-negative linear function with a concave function yields a submodular function \citep{Ahmed2011}; and (ii) the sum of a submodular and a linear function is also submodular. 
Therefore, $X_0$ and $X$ are both submodular sets \citep{Edmonds2003,Yu2023} that additionally involve GUB constraints. 
Here, a function $g:\{0,1\}^n\rightarrow \R$ is submodular if for any $x,y\in \{0,1\}^n$ with $x\leq y$ (where the ``$\leq$'' is component-wise) and any $i\in [n]$ with $x_i=y_i=0$, it follows that $g(x+\mathbf{e}^i)-g(x)\geq g(y+\mathbf{e}^i)-g(y)$,
where $\mathbf{e}^i\in \R^n$ is the $i$-th standard unit vector and the submodular set is defined as $Y := \left\{(w,x) \in \R \times \{0, 1\}^n \,:\, w \geq g(x)\right\}$.
In the seminal work, \citet{Edmonds2003} developed the so-called extended polymatroid inequalities (\EPIs) that provide a complete linear description of $\conv(Y)$ and can be separated in $\CO(n\log n)$ time.
These results have been widely employed in strengthening the continuous relaxations of and improving the solution efficiency of the branch-and-cut algorithm for the mixed integer linear/nonlinear programming formulations in which $Y$ arises as a substructure \citep{Atamturk2008, Atamturk2020, Kilincc2025, Ahmed2013, Kilincc2022, Xie2021}. 

\citet{Yu2017a} considered a cardinality-constrained submodular set where the submodular function takes the form $f(a^\top x)$ (with $f:\R\rightarrow \R$ concave and $a\in \R^n_+$), that is,
$Z=\{(w,x)\in \R\times \{0,1\}^n \,:\, w\geq f(a^\top x), ~\sum_{i=1}^n x_i\leq c\}$ (with $c \in \Z_+$),
and presented a linear description for $\conv(Z)$ when all coefficients $\{a_i\}_{i \in [n]}$ are identical. 
For the general case where $\{a_i\}_{i \in [n]}$ are arbitrary positive {values}, \citet{Yu2023} derived strong valid inequalities for $\conv(Z)$ using sequential lifting techniques \citep{Wolsey1976, Richard2011}.
\citet{Atamturk2009} considered the submodular knapsack set $Y \cap \{w = b\}$ with $b \in \R$, and developed strong valid inequalities lifted from cover inequalities.
\citet{Yu2025} considered a mixed integer extension of submodularity, known as diminishing returns (DR)-submodularity, and gave the convex hull description of the epigraph of a DR-submodular function under box and monotonicity constraints.
For the submodular $\leq$-set given by $Y_{\leq}:=\left\{(w,x) \in \R \times \{0, 1\}^n \,:\, w \leq g(x)\right\}$,
\citet{Wolsey1999} established a linear characterization using the  submodular inequalities. 
These inequalities were later strengthened for the special case $\left\{(w,x) \in \R \times \{0, 1\}^n \,:\, w \leq f(a^\top x)\right\}$ (with $f:\R\rightarrow \R$ concave and $a\in \R^n_+$) or its constrained variants using sequence-independent lifting techniques; see \citet{Ahmed2011, Yu2017b}, and \citet{Shi2022}.

As discussed, if the \GUB constraints are removed, both $X$ and $X_0$ become submodular sets.
Therefore, the \EPIs, which are valid for submodular set $Y$,   are also valid for $X$ and $X_0$ (and can provide linear descriptions of the two sets).
Although the \EPIs are strong facet-defining inequalities of $\conv(Y)$, they may be weak for $\conv(X_0)$ and $\conv(X)$, as the derivation does not consider the \GUB constraints; see \cref{section2} for an illustration.
On the other hand, previous polyhedral results on various mixed integer (linear) sets \citep{Nemhauser1994, Sherali1995, Gu1998, Wolsey1990, Angulo2015, Gokce2015} have shown that by using the information of \GUB constraints, much stronger valid inequalities can usually be derived.
Motivated by this, we aim to develop strong valid inequalities for the mixed integer nonlinear sets $X_0$ and $X$ by explicitly taking the \GUB constraints into consideration.
 
\subsection{Contributions}
The primary goal of this work is to conduct an in-depth study of strong valid inequalities for $\conv(X_0)$ and $\conv(X)$, and to employ the derived inequalities as cutting planes to improve the computational performance for solving the related optimization problems.
In particular,
\begin{itemize}
	\item We first derive a new class of facet-defining inequalities for $\conv(X_0)$ by sequentially lifting a seed inequality $w\geq f(0)$ for a one-dimensional restriction of $X_0$, obtained by fixing all binary variables in $X_0$ to their lower bounds, according to some prespecified lifting sequence (i.e., a permutation of $[n]$).
	We derive closed-form formulae for the lifting coefficients, with which
	each lifted inequality can be computed in $\CO(n^2)$ time.
	We prove that for a fixed permutation of $[n]$, the lifted inequality, called lifted \EPI (\LEPI), is stronger than the classic \EPI \citep{Edmonds2003}, defined by the same permutation.
	\item We consider a subclass of permutations, which we refer to as \emph{partial ascending permutations} (each of which is a permutation $\D=(\D_1,\dots, \D_n)$ of $[n]$ such that for all $i,j\in [n]$ and $k\in K$ with $i < j $ and $\D_i,\D_j\in N_k$, it follows $a_{\D_i}\leq a_{\D_j}$), and study the corresponding \LEPIs.
	We show that (i) for a \LEPI  defined by an arbitrary permutation, there exists a partial ascending permutation that yields the same \LEPI;
	(ii) for any partial ascending permutation, the corresponding \LEPI can be computed in $\CO(n)$ time, as opposed to $\CO(n^2)$ time for the general case;
	and (iii) for any point $(w^*, x^*)\in \R \times [0,1]^n$ satisfying $\sum_{i\in N_k}x^*_i\leq 1$ for all $k\in K$, we can either verify that $(w^*, x^*)\in \conv(X_0)$ or find a violated \LEPI defined by some partial ascending permutation in $\CO(n\log n)$ time.
	The consequences of these results are two-fold.
	First, all \LEPIs (corresponding to the partial ascending permutations), along with bound and \GUB constraints, provide a complete linear description of $\conv(X_0)$, which
	demonstrates the strength of \LEPIs in strengthening the continuous relaxations of related problems for which $X_0$ appears as a substructure.  
	Second, the separation of \LEPIs (or $\conv(X_0)$) can be conducted in  $\CO(n\log n)$ time, which paves the way for the efficient generation of \LEPIs within a branch-and-cut framework.
	\item We extend our polyhedral results of $\conv(X_0)$ onto $\conv(X)$.
	In particular, we establish a one-to-one correspondence between the valid (and facet-defining) inequalities of $\conv(X_0)$ and those of $\conv(X)$.
	Therefore, $\conv(X)$ can be completely characterized using a class of variant \LEPIs (denoted as \LEPI's), which can also be separated in $\CO(n\log n)$ time.
\end{itemize}

We apply the proposed \LEPIs and \LEPI's as cutting planes within a branch-and-cut framework for solving the \eqref{cGC-MCLP} and \eqref{problemSOCP}, respectively.
Extensive computational experiments demonstrate that compared with the classic \EPIs, the proposed \LEPIs and \LEPI's are much more effective in strengthening the continuous relaxations and  improving the overall computational performance of the branch-and-cut framework for solving these two problems.

\subsection{Outline and notation}
The remainder of the paper is organized as follows.
In \cref{section2}, we present the classic \EPIs of \citet{Edmonds2003} and discuss their weakness for $\conv(X_0)$.
In \cref{section3}, we derive the facet-defining \LEPIs for $\conv(X_0)$ via sequential lifting techniques.
In \cref{section4}, we establish a more compact characterization of the \LEPIs using partial ascending permutations and show that each \LEPI defined by a partial ascending permutation can be computed in $\CO(n)$ time.
In \cref{section5}, we provide a complete linear description for $\conv(X_0)$ (using the \LEPIs corresponding to partial ascending permutations) and propose an $\CO(n\log n)$-time separation algorithm.
In \cref{section6}, we extend the above polyhedral results of $\conv(X_0)$ onto $\conv(X)$.
In \cref{section7}, we report the computational results. 
Finally, in \cref{section8}, we conclude the paper. 

We next present notation used throughout the paper.
For a permutation $\D=(\D_1,\D_2,\dots,\D_n)$ of $[n]$, we use $\D(j):=\{\D_1,\D_2,\dots,\D_j\}$ to denote the set involving the first $j$ items of $\D$ where $\D(0)=\varnothing$.
Given any $r_1,r_2\in [n]$ with $r_1=\D_{j_1}$ and $r_2=\D_{j_2}$, we call that (i) $r_1$ precedes $r_2$ in permutation $\D$ if $j_1<j_2$, and (ii) $r_1$ comes after $r_2$ in permutation $\D$ if $j_1>j_2$.
Let $\boldsymbol{e}_i$ and $\boldsymbol{0}$ be the $i$-th standard unit vector and the zero vector in $\R^n$, respectively. 
Given a vector $a\in \R^n$ and a set $S\subseteq [n]$, let $a(S):=\sum_{j\in S}a_j$ and $\chi^S:=\sum_{i\in S}\boldsymbol{e}_i$ denote the incidence vector of $S$.
We use $|S|$ to denote the cardinality of set $S$.
To simplify the notation, we abbreviate $S\cup \{j\}$ and $S \backslash \{j\}$ as $S\cup j$ and $S \backslash j$, respectively.
Without loss of generality, we impose the following assumptions on sets $X_0$ and $X$:
(i) $f(0)=0$ and $K=[t]$ for some $t\in \Z_{++}$;
(ii) the items in $N_k$ are indexed as follows:
\begin{equation}\label{assumption:Nk}
	N_k=\{i_{k-1}+1,i_{k-1}+2,\dots,i_{k-1}+|N_k|\}, ~\forall~k \in [t],
\end{equation}  
where $i_k:=\sum_{j=1}^{k}|N_j|$ and $i_0:=0$;
and (iii) $\{a_i\}_{i\in N_k}$ are arranged in ascending order:
\begin{equation}\label{asumption:a}
	a_{i_{k-1}+1}\leq a_{i_{k-1}+2}\leq\cdots\leq a_{i_{k-1}+|N_k|},~\forall~ k\in[t].
\end{equation}

\section{\EPIs and their weakness for conv$(X_0)$}\label{section2}
We first present the \EPIs \citep{Edmonds2003} for a generic submodular set: $Y=\{(w,x)\in\R\times\{0,1\}^n:w\geq g(x)\}$, 
where $g:\{0,1\}^n\rightarrow\R$ is a submodular function with $g(\boldsymbol{0})=0$.
Given any permutation $\D=(\D_1,\D_2,\dots,\D_n)$ of $[n]$, the corresponding \EPI takes the form 
\begin{equation}\label{EPI}\tag{EPI}
w\geq \sum_{j=1}^n\rho_{\D_j}x_{\D_j},
\end{equation}
where for each $j \in [n]$, $\D(j) = \{\D_1, \ldots, \D_j\}$ and $\rho_{\D_j}=g(\chi^{\D(j)})-g(\chi^{\D(j-1)})$.
Different permutations of $[n]$ may yield different \EPIs. 
From the classic results in \citet{Edmonds2003}, all \EPIs are facet-defining for $\conv(Y)$ and, along with the bound constraints $0\leq x_i\leq 1$ for $i\in [n]$, are able to describe $\conv(Y)$.
Moreover, \citet{Yu2023} showed that \EPIs can be derived using sequential lifting techniques, as formally stated in the following remark.
\begin{Remark}\label{remark:liftY}
	Let $w\geq g(\boldsymbol{0})=0$ be the seed inequality.
	Given any permutation $\D=(\D_1,\D_2,\dots,\D_n)$ of $[n]$, we can lift the seed inequality with the variables $\{x_i\}_{i\in [n]}$ in the order of $\D$ to obtain the lifted inequality
	\begin{equation}\label{liftY}
	w\geq\sum_{j=1}^n\xi_{\D_j}x_{\D_j},
	\end{equation}
	where the lifting coefficient $\xi_{\D_j}$ for $j\in [n]$ is computed by
	\begin{equation}\label{problem:liftY}
	\begin{aligned}
		\xi_{\D_j}:=\min_{w,\,x} \quad&w-\sum_{i=1}^{j-1}\xi_{\D_i}x_{\D_i}\\
		\textup{s.t.}\quad&w\geq g(x),\\
		&x_{\D_i}=0, \ \forall \ i\in \{j+1,\dots,n\},\\
		&x_{\D_j}=1,\\
		&x\in\{0,1\}^{n},~w\in\R.
	\end{aligned}
	\end{equation}
	\citet{Yu2023} showed that an optimal solution of problem \eqref{problem:liftY} is given by $(w,x)=(g(\chi^{\D(j)}), \chi^{\D(j)})$, and thus $\xi_{\D_j}=\rho_{\D_j}$ for all $j\in[n]$.
	As a result, the lifted inequality \eqref{liftY} coincides with inequality \eqref{EPI}.
\end{Remark}

Next, we apply Edmonds' result to the considered set $X_0$. 
As mentioned in \cref{section1}, $X_0$ is a submodular set with \GUB constraints:
\begin{equation*}
	X_0=Y\cap\left\{(w,x)\in\R\times\{0,1\}^n:\sum_{i\in N_k}x_i\leq 1,~\forall~k\in[t]\right\},
\end{equation*}
where the submodular function in $Y$ is $g(x) = f(a^\top x)$ (with $f \,:\, \R \rightarrow \R$ concave and $a \in \R_+^n$).
It immediately follows from Edmonds' result 
that (i) inequality \eqref{EPI} is valid for $X_0$ where $\rho_{\D_j}=f(a(\D(j)))-f(a(\D(j-1)))$ for $j\in [n]$ 
and (ii) all \EPIs, together with \GUB and binary constraints, provide a linear description of $X_0$.

Unfortunately, the derivation of \EPIs does not take the \GUB constraints $\{  \sum_{i \in N_k} x_i \leq 1\}_{k \in [t]}$ into account, which may lead to weak inequalities for $\conv(X_0)$.
We provide the following example to demonstrate this.
\begin{Example}\label{Example1}
	Letting $f(z)=-z^2$, $n=3$, $N_1=\{1,2\}$, $N_2=\{3\}$, and $a=(1,2,3)^\top$, then $X_0$ reduces to
	\begin{equation}\label{example1X_0}
		X_0=\left\{
		(w,x)\in \R \times\{0,1\}^3
		\,:\,
		w \geq -(x_1 + 2x_2 + 3x_3)^2, ~x_1 + x_2 \leq 1\right\}.
	\end{equation}
	All \EPIs for $X_0$ are presented as follows:
	\vspace{8pt}

	\begin{minipage}{0.48\textwidth}
	\begin{align}
			&w+x_1+8x_2+27x_3\geq 0,\label{eq1}\\
			&w+x_1+20x_2+15x_3\geq 0,\label{eq2}\\
			&w+5x_1+4x_2+27x_3\geq 0,\label{eq3}\
	\end{align}
	\vspace{1pt}
	
	\end{minipage}
	\begin{minipage}{0.48\textwidth}
	\begin{align}
		&w+11x_1+4x_2+21x_3\geq 0,\label{eq4}\\
		&w+7x_1+20x_2+9x_3\geq 0,\label{eq5}\\
		&w+11x_1+16x_2+9x_3\geq 0.\label{eq6}\
	\end{align}
	\vspace{1pt}
	
	\end{minipage}
	Using \polymake \citep{Assarf2017}, we can obtain a linear characterization for $\conv(X_0)$:
	\vspace{8pt}

	\begin{minipage}{0.48\textwidth}
	\begin{align}
			&w+x_1+4x_2+21x_3\geq 0,\label{eq7}\\
			&w+x_1+10x_2+15x_3\geq 0,\label{eq8}\\
			&w+7x_1+16x_2+9x_3\geq 0,\label{eq9}
	\end{align}
	\vspace{1pt}
	
	\end{minipage}
	\begin{minipage}{0.48\textwidth}
	\begin{align}
	    & \hspace{-1.2cm} x_1+x_2\leq 1,~x_3\leq 1,\\
	    & \hspace{-1.2cm} x_i\geq 0,~\forall~i\in[3].
	\end{align}
	\vspace{1cm}
	
	\end{minipage}
	None of the inequalities in \eqref{eq1}--\eqref{eq6} is facet-defining for $\conv(X_0)$.
	Indeed, inequalities \eqref{eq1}, \eqref{eq3}, and \eqref{eq4} are dominated by inequality \eqref{eq7}; inequality \eqref{eq2} is dominated by inequality \eqref{eq8}; and inequalities \eqref{eq5} and \eqref{eq6} are dominated by inequality \eqref{eq9}. 
	This example clearly shows that different from $\conv(Y)$ for which \EPIs are facet-defining,  for $\conv(X_0)$ (a restriction of $\conv(Y)$), \EPIs could be weak.
\end{Example}
This example motivates us to develop strong valid inequalities by directly investigating $\conv(X_0)$, rather than investigating its relaxation $\conv(Y)$.

\section{Strong valid inequalities for conv$(X_0)$}\label{section3}
In this section, we derive strong valid inequalities for $\conv(X_0)$ using sequential lifting techniques.
The lifting problem for $\conv(X_0)$ incorporates the \GUB constraints, and thus is more complicated than the lifting problem \eqref{problem:liftY} for $\conv(Y)$.
Nevertheless, we can still derive a closed-form optimal solution of the lifting problem for $\conv(X_0)$, rendering it efficient to compute the lifted inequality, called lifted \EPI (\LEPI).
We show that the \LEPIs are stronger than the classic \EPIs of \citet{Edmonds2003}.  

\subsection{Lifted extended polymatroid inequalities}\label{section3.1}
We first use sequential lifting techniques to derive strong valid inequalities for $\conv(X_0)$. 
For $S \subseteq [n]$, let 
\begin{equation*}
	X_0(S)=\left\{(w,x)\in\R\times\{0,1\}^{|S|} :w\geq f\left(\sum_{i\in S}a_ix_i\right),
	\sum_{i\in N_k\cap S}x_i\leq 1,  ~\forall~ k\in [t] \right\},
\end{equation*}
which is obtained by fixing $x_i$ to $0$ for all $i\in [n] \backslash S$ in $X_0$.
Note that the \emph{seed inequality} $w\geq f(0)=0$ defines a facet of $\conv(X_0(\varnothing))$.
To obtain a valid inequality
\begin{equation}\label{LEPI}\tag{LEPI}
	w\geq \sum_{i=1}^n\eta^{\D}_{\D_i}x_{\D_i},
\end{equation}
for $X_0$, 
we lift the \emph{seed inequality} with variables $\{x_i\}_{i\in [n]}$ according to some prespecified order/permutation $\delta=(\delta_1,\delta_2,\dots,\delta_n)$ (where $\{ \delta_1,\delta_2,\dots,\delta_n\} = [n]$).

In particular, in an intermediate step, a valid inequality $w\geq \sum_{i=1}^{j-1}\eta^{\D}_{\D_i}x_{\D_i}$ for $\conv(X_0(\D(j-1)))$
is derived and we attempt to lift variable $x_{\D_j}$, where
the lifting coefficient $\eta^{\D}_{\D_j}$ can be computed by solving the lifting problem:
\begin{subequations}\label{prob:liftX_0}
	\begin{align}
		\eta^{\D}_{\D_j}:=\min_{w,\,x} \quad &w-\sum_{i=1}^{j-1}\eta^{\D}_{\D_i}x_{\D_i}\\
		\text{s.t.} \quad &w\geq f\left(\sum_{i=1}^{j}a_{\D_i}x_{\D_i}\right),\label{prob:liftX_0-2}\\
		&\sum_{i\in N_k\cap\D(j)}x_i\leq 1, ~\forall~ k\in[t],\label{prob:liftX_0-3}\\
		&x_{\D_j}=1,~x\in\{0,1\}^{|\D(j)|}, ~ w \in\R.\label{prob:liftX_0-5}
	\end{align}
\end{subequations}
Since the \emph{seed inequality} $w\geq 0$ defines a facet of $\conv(X_0(\varnothing))$, it follows from the classic result of \citet{Wolsey1976} that \eqref{LEPI} defines a facet of $\conv(X_0)$.
\begin{Lemma}[cf. \citet{Wolsey1976}]\label{lem:facet}
	For any $j\in[n]$, $w\geq\sum_{i=1}^j\eta^{\D}_{\D_i}x_{\D_i}$ defines a facet of $\conv(X_0(\D(j)))$.
	In particular, \eqref{LEPI} defines a facet of $\conv(X_0)$.
\end{Lemma}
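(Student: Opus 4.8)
The plan is to argue by induction on $j$, following the standard sequential-lifting template of \citet{Wolsey1976}. The base case $j=0$ is immediate: $\conv(X_0(\varnothing))=\{w\in\R:w\geq 0\}$ is one-dimensional, and the face $\{w=0\}$ is zero-dimensional, hence a facet. For the inductive step I would assume that $w\geq\sum_{i=1}^{j-1}\eta^{\D}_{\D_i}x_{\D_i}$ defines a facet of $\conv(X_0(\D(j-1)))$ and aim to conclude the same for $w\geq\sum_{i=1}^{j}\eta^{\D}_{\D_i}x_{\D_i}$ and $\conv(X_0(\D(j)))$. Three things must be checked: (i) the lifting coefficient $\eta^{\D}_{\D_j}$ is finite and attained; (ii) $\conv(X_0(\D(j)))$ is full-dimensional, so that a face of dimension $j$ is a facet; and (iii) the lifted inequality is valid and its defining face acquires exactly one new affinely independent point.

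For (i): the lifting problem \eqref{prob:liftX_0} is feasible because $\{N_k\}_{k\in[t]}$ is a partition, so setting $x_{\D_j}=1$ and $x_i=0$ for every other $i\in\D(j)$ satisfies the \GUB constraints \eqref{prob:liftX_0-3}, and then $w=f(a_{\D_j})$ satisfies \eqref{prob:liftX_0-2}. Since for each fixed binary $x$ the objective is minimized at $w=f(\sum_i a_{\D_i}x_{\D_i})$, problem \eqref{prob:liftX_0} reduces to a minimum over the finitely many feasible binary assignments; hence $\eta^{\D}_{\D_j}$ is finite and attained at some $(\bar w,\bar x)$ with $\bar x_{\D_j}=1$ and $\bar w=\sum_{i=1}^{j}\eta^{\D}_{\D_i}\bar x_{\D_i}$.

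For (ii) and (iii): the inequality $w\geq\sum_{i=1}^{j}\eta^{\D}_{\D_i}x_{\D_i}$ is valid over $X_0(\D(j))$ --- if $x_{\D_j}=0$ this is the inductive hypothesis applied to the restriction to $\D(j-1)$, and if $x_{\D_j}=1$ it is exactly the definition of $\eta^{\D}_{\D_j}$ as a minimum --- hence valid over $\conv(X_0(\D(j)))$. Full-dimensionality follows by exhibiting the $j+2$ affinely independent points $(0,\boldsymbol{0})$, $(1,\boldsymbol{0})$, and $(f(a_{\D_i}),\boldsymbol{e}_{\D_i})$ for $i\in[j]$ of $X_0(\D(j))$, so $\dim\conv(X_0(\D(j)))=j+1$. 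Finally, let $F:=\conv(X_0(\D(j)))\cap\{w=\sum_{i=1}^{j}\eta^{\D}_{\D_i}x_{\D_i}\}$. The $j$ affinely independent points witnessing the facetness of $w\geq\sum_{i=1}^{j-1}\eta^{\D}_{\D_i}x_{\D_i}$ for $\conv(X_0(\D(j-1)))$, embedded into $\R^{1+j}$ with $x_{\D_j}=0$, all lie in $F$; together with $(\bar w,\bar x)$, which has $\bar x_{\D_j}=1$ and is therefore affinely independent of the former, this gives $j+1$ affinely independent points in $F$. Since $F$ is a proper face of the $(j+1)$-dimensional polytope $\conv(X_0(\D(j)))$, we get $\dim F=j$, i.e.\ $F$ is a facet. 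Taking $j=n$ and noting $X_0(\D(n))=X_0$ gives the ``in particular'' claim.

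Routine bookkeeping aside, the only genuinely load-bearing points are the two flagged above: that the nonlinear mixed-integer lifting problem \eqref{prob:liftX_0} always attains a finite optimum (which is also what makes the closed-form analysis of the subsequent subsections meaningful), and the full-dimensionality of $\conv(X_0(\D(j)))$ --- both of which ultimately reduce to the partition structure of $\{N_k\}$ together with the monotone $w$-constraint and the boundedness of $\{0,1\}^{|\D(j)|}$. I expect the paper's proof to cite \citet{Wolsey1976} for the lifting mechanics and reserve its effort for the harder, quantitative task that follows, namely producing a closed-form optimal solution of \eqref{prob:liftX_0} and hence explicit formulae for the coefficients $\eta^{\D}_{\D_j}$.
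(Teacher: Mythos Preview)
Your proposal is correct and, as you anticipated, the paper gives no proof at all: it simply states the lemma with the attribution ``cf.\ \citet{Wolsey1976}'' and moves directly to the closed-form analysis of the lifting problem. Your write-up is a faithful unpacking of the standard Wolsey argument; the only quibble is that $\conv(X_0(\D(j)))$ is an unbounded polyhedron rather than a polytope, but this does not affect the codimension-one reasoning.
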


\subsection{Solving the lifting problems \eqref{prob:liftX_0}}\label{section3.2}
Next, we solve the lifting problems \eqref{prob:liftX_0} for  $j\in [n]$ to determine the lifting coefficients $\{\eta_{\D_j}^\D\}_{j \in [n]}$ and inequality \eqref{LEPI}.
Observe that when relaxing the complicated GUB constraints \eqref{prob:liftX_0-3} from problem \eqref{prob:liftX_0}, we obtain problem \eqref{problem:liftY}.
As stated in \cref{remark:liftY}, we can derive a closed-form optimal solution for this relaxation problem \eqref{problem:liftY}.
In the following, we show that even with the GUB constraints \eqref{prob:liftX_0-3}, a closed-form optimal solution for problem \eqref{prob:liftX_0} can still be derived.

We first transform the lifting problem \eqref{prob:liftX_0} into a set optimization problem. 
Note that for any optimal solution $(w,x) $ of problem \eqref{prob:liftX_0}, constraint \eqref{prob:liftX_0-2} holds at equality, i.e., $w=f\left(\sum_{i=1}^{j}a_{\D_i}x_{\D_i}\right)$.
Thus, we can project variable $w$ out from problem \eqref{prob:liftX_0}.
Given a feasible solution $x$ of problem \eqref{prob:liftX_0}, we denote the support of $x$ by $S = \{i \in [n] \,:\, x_i=1\}$.
Then the objective value of \eqref{prob:liftX_0} at $x$ is given by $f(a(S))-\sum_{i\in S \backslash\D_j}\eta^{\D}_i$.
Since $x$ satisfies constraints \eqref{prob:liftX_0-3} and \eqref{prob:liftX_0-5}, it follows that
$|S \cap N_k| \leq 1$ for $k \in [t]$, $\D_j \in S$, and $S \subseteq \D(j)$.
Therefore, the lifting problem \eqref{prob:liftX_0} is equivalent to
\begin{equation}\label{prob:equlift}
	\eta^\D_{\D_j} = \min_S \left\{  f(a(S))-\sum_{i\in S \backslash\D_j}\eta^{\D}_i \,:\,
	S \subseteq \D(j), ~ |S \cap N_k |\leq 1, ~\forall~k\in[t], ~\D_j \in S
	 \right\}.
\end{equation}
In what follows, we also refer to problem \eqref{prob:equlift} as the lifting problem.

Before deriving an optimal solution of problem \eqref{prob:equlift}, we introduce some notation. 
Define
\begin{equation}\label{def:U}
	U^{\D}_j : = \bigcup_{k=1}^t\left\{\max_{i\in N_k\cap\D(j)}i\right\}, ~ \forall~ j\in [n] \cup 0.
\end{equation}
Here and throughout the paper,  we let $\max_{i\in\varnothing}i=0$ and $\{\max_{i\in\varnothing}i\}=\varnothing$ (by abuse of notation).
Note that since $\delta(0) =\varnothing$, it follows that $U_0^\delta=\varnothing$. 
By definition and  \eqref{asumption:a}, it follows immediately that
\begin{Remark}\label{lem:propertyU}
	For any $j \in [n]$, (i) $|U_j^\delta \cup N_k|\leq 1$ holds for all $k \in [t]$; and 
	(ii) $a(U^{\D}_j)\geq a(S)$ holds for all $S \subseteq \D(j)$ with $|S \cap N_k |\leq 1$ for $k\in[t]$.
\end{Remark}
For $j \in [n]$, let $k\in [t]$ be such that  $\D_j \in N_k$  and 
\begin{equation}\label{problem:defh}
	h^{\D}_j:=
	\begin{cases}
		j, & 
		~\text{if}~ \D_j= \max_{i\in N_k\cap \D(j)} i,\\
	    \min\{\ell\in [j-1]\,:\,\D_{\ell}>\D_j,
	    ~\D_{\ell}\in N_k\}, 
	    & ~\text{if}~ \D_j < \max_{i\in N_k \cap \D(j)} i.
	\end{cases}
\end{equation}
Observe that if $\D_j= \max_{i\in N_k\cap \D(j)} i$, then by definition, we have $\D_{h^{\D}_j}=\D_j\in U^\D_j$;
otherwise, there exists some $\ell\in [j-1]$ 
such that $\D_{\ell} \in N_k$ and $\D_{\ell}>\D_j$, and as a result, 
$h^\D_j$ is well-defined.
From the definition of $h^\D_j$, $\D_{\ell} \leq \D_{h^\D_j}$ holds for all $\ell\leq h^\D_j$ (or equivalently, $\D_\ell \in \D(h^\D_j)$) with $\D_{\ell}\in N_k$,
and thus $\D_{h^\D_j}=\max_{i\in N_k\cap \D(h^\D_j)} i$.
As a result, 
\begin{Lemma}\label{lem:propertyh}
	$\D_{h^\D_j}\in U^\D_{h^\D_j}$ holds for all $j\in[n]$.
\end{Lemma}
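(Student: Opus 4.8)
The plan is to follow the case split built into the definition \eqref{problem:defh} of $h_j^\D$. Fix $j \in [n]$ and let $k \in [t]$ be the unique index with $\D_j \in N_k$. If $\D_j = \max_{i \in N_k \cap \D(j)} i$, then $h_j^\D = j$, so $\D_{h_j^\D} = \D_j$ is precisely the element contributed by $N_k$ to the union $U_j^\D = U_{h_j^\D}^\D$ in \eqref{def:U}, and the conclusion is immediate.

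The substantive case is $\D_j < \max_{i \in N_k \cap \D(j)} i$. First I would record that $h := h_j^\D$ is well-defined: letting $p$ be the index with $\D_p = \max_{i \in N_k \cap \D(j)} i$, the inequality $\D_p > \D_j$ forces $p \ne j$ and hence $p \in [j-1]$, so the set $\{\ell \in [j-1] \,:\, \D_\ell > \D_j, ~ \D_\ell \in N_k\}$ whose minimum defines $h$ is nonempty; in particular $h \le j-1$ and, since $h$ belongs to this set, $\D_h > \D_j$. It then suffices to prove $\D_h = \max_{i \in N_k \cap \D(h)} i$, because $\D_h \in N_k$ by construction and this equality is exactly the statement $\D_h \in U_h^\D$. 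I would establish it by showing $\D_\ell \le \D_h$ for every $\ell \le h$ with $\D_\ell \in N_k$: the case $\ell = h$ is trivial, and for $\ell < h$ the minimality of $h$ means $\ell$ violates the defining condition; since $\ell \in [j-1]$ (using $\ell < h \le j-1$) and $\D_\ell \in N_k$, the only way to violate it is $\D_\ell \le \D_j$, whence $\D_\ell \le \D_j < \D_h$. Hence $\D_h$ is the largest index in $N_k \cap \D(h)$, that is, $\D_h \in U_h^\D = U_{h_j^\D}^\D$, as desired.

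The argument is elementary bookkeeping about the permutation $\D$, so I do not anticipate a genuine obstacle; the only point requiring care is the second case, where one must check that the minimizing set is nonempty (so that $h$ exists) and, when invoking minimality for indices $\ell < h$, that these indices really lie in $[j-1]$. Both follow once the bound $h \le j-1$ is recorded, and everything else is immediate from the injectivity of $\D$ together with the definitions \eqref{def:U} and \eqref{problem:defh}.
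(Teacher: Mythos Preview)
Your proof is correct and follows essentially the same approach as the paper: the paper's argument (given in the paragraph immediately preceding the lemma) performs the same case split on \eqref{problem:defh}, verifies well-definedness of $h_j^\D$ in the second case, and uses minimality of $h_j^\D$ to deduce $\D_\ell \le \D_{h_j^\D}$ for all $\ell \le h_j^\D$ with $\D_\ell \in N_k$. Your write-up is slightly more explicit about why indices $\ell < h$ lie in $[j-1]$, but the underlying reasoning is identical.
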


Using the notation $U^\D_j$ and $h^\D_j$, we can provide a closed-form optimal solution for the lifting problem \eqref{prob:equlift}.
\begin{Theorem}\label{thm:lifting}
	Let $\D$ be any permutation of $[n]$.
	Then for $j\in[n]$, an optimal solution for the lifting problem \eqref{prob:equlift} is given by
	\begin{equation}\label{def:W}
		W^\D_j:=U^\D_{ h^\D_j}\backslash \D_{h^\D_j}\cup \D_j
	\end{equation}
	and the optimal value is
	\begin{equation*}
	\eta^{\D}_{\D_j}=f(a(W_j^\D))-\sum_{i\in W_j^\D \backslash\D_j}\eta^{\D}_i.
	\end{equation*}
\end{Theorem}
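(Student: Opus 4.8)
The plan is to argue by induction on $j$, using in the inductive step two consequences of the inductive hypothesis: (a) for every $j'<j$ the truncated inequality $w\ge\sum_{\ell=1}^{j'}\eta^{\D}_{\D_\ell}x_{\D_\ell}$ is valid for $\conv(X_0(\D(j')))$ (this is built into the lifting construction and recorded in \cref{lem:facet}), so in particular
\[
  f(a(T))\ \ge\ \sum_{i\in T}\eta^{\D}_i\qquad\text{for every }T\subseteq\D(j-1)\text{ with }|T\cap N_k|\le 1\text{ for all }k\in[t];
\]
and (b) the identity $f(a(W^{\D}_{j'}))=\sum_{i\in W^{\D}_{j'}}\eta^{\D}_i$ for $j'<j$, which is the optimal-value formula at step $j'$ rearranged. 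The base case $j=1$ is immediate, since the unique feasible $S$ in \eqref{prob:equlift} is $\{\D_1\}=W^{\D}_1$.

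In the inductive step I separate a routine feasibility part from the substantive optimality part. Write $k$ for the class index with $\D_j\in N_k$ and set $h:=h^{\D}_j$. For feasibility: by \cref{lem:propertyh} we have $\D_h\in U^{\D}_h$, and since $\D_h\in N_k$ it is --- by \cref{lem:propertyU}(i) --- the unique element of $U^{\D}_h$ in $N_k$; hence $U^{\D}_h\setminus\D_h$ meets each class at most once and meets $N_k$ not at all, so $W^{\D}_j=(U^{\D}_h\setminus\D_h)\cup\D_j$ meets each class at most once. Since $U^{\D}_h\subseteq\D(h)\subseteq\D(j)$ (using $h\le j$, which is immediate from \eqref{problem:defh}) and $\D_j\in\D(j)$, we get $W^{\D}_j\subseteq\D(j)$; moreover $\D_j\in W^{\D}_j$. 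Thus $W^{\D}_j$ is feasible for \eqref{prob:equlift}, which already gives $\eta^{\D}_{\D_j}\le f(a(W^{\D}_j))-\sum_{i\in W^{\D}_j\setminus\D_j}\eta^{\D}_i$; the stated optimal value then follows once the reverse inequality is established.

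For optimality, write $g_j(S):=f(a(S))-\sum_{i\in S\setminus\D_j}\eta^{\D}_i$ for the objective of \eqref{prob:equlift}; I must show $g_j(S)\ge g_j(W^{\D}_j)$ for every feasible $S\ni\D_j$, and I set $S':=S\setminus\D_j$, a subset of $\D(j-1)$ disjoint from $N_k$ and meeting each class at most once. The only analytic input is concavity of $f$ with $f(0)=0$, used in the form that for $0\le\alpha\le\beta$ the map $x\mapsto f(x+\alpha)-f(x+\beta)$ is nondecreasing; the combinatorial inputs are \cref{lem:propertyU}, \cref{lem:propertyh}, the in-class ordering \eqref{asumption:a}, and the shape of $W^{\D}_j$. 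I distinguish the two cases of \eqref{problem:defh}. If $\D_j=\max_{i\in N_k\cap\D(j)}i$ (so $h=j$ and $W^{\D}_j=U^{\D}_j$), I transform $S$ into $U^{\D}_j$ through a sequence of exchanges --- raising the index of each $S'$-element to the largest index of its class available in $\D(j)$, and inserting any element of $U^{\D}_j\setminus\D_j$ absent from $S'$ --- and show, via \eqref{asumption:a}, (b) and concavity, that no such step increases $g_j$. If instead $\D_j<\max_{i\in N_k\cap\D(j)}i$, then $h<j$, $\D_h\in N_k$ with $a_{\D_j}\le a_{\D_h}$ (by \eqref{asumption:a}), and $\D_h=\max_{i\in N_k\cap\D(h)}i$, so that $h^{\D}_h=h$ and hence $W^{\D}_h=U^{\D}_h$ (by definition of $W^{\D}_h$ and \cref{lem:propertyh}), with $f(a(U^{\D}_h))=\sum_{i\in U^{\D}_h}\eta^{\D}_i$ by (b); then I replace $\D_j$ by $\D_h$ in $S$, apply the validity inequality to the feasible set $S'\cup\D_h\subseteq\D(j-1)$ together with the step-$h$ optimality of $W^{\D}_h$, and correct for the gap $a_{\D_h}-a_{\D_j}\ge 0$ and for the $S'$-elements outside $\D(h)$ by the same concavity estimates, arriving at $g_j(S)\ge g_j(W^{\D}_j)$. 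Evaluating $g_j$ at $S=W^{\D}_j$ then gives the claimed value of $\eta^{\D}_{\D_j}$.

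The main obstacle is the optimality step. Its difficulty is that the lifting coefficients $\eta^{\D}_{\D_m}$ are defined recursively, so $\eta^{\D}_{\D_m}$ effectively records the best same-class companion available when variable $\D_m$ was lifted; to run the exchanges above I will need that, whenever $\D_m$ is removed from (or fails to belong to) $S$, this companion has $a$-value at least $a_{\D_j}$, so that re-deploying $\D_m$ alongside $\D_j$ is never profitable by concavity. Establishing this, and sequencing the exchanges so that each one moves $S$ monotonically toward $W^{\D}_j$ without increasing $g_j$, appears to require an auxiliary monotonicity lemma proved by its own induction along the lifting order $\D$. Once that is in hand, every remaining estimate reduces to the elementary concavity fact above together with manipulation of the definitions of $U^{\D}_j$, $h^{\D}_j$ and $W^{\D}_j$; the feasibility part and the within-class index-raising exchanges are, by contrast, straightforward.
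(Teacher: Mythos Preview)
Your inductive framework, the feasibility argument, and the two ingredients (a) and (b) are all correct and match the paper. The divergence is in how the optimality step is organized, and here your exchange-based plan introduces a gap that the paper avoids entirely.

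The paper's case split is not on $h=j$ versus $h<j$ but on whether $a(W^{\D}_j)<a(S)$ or $a(W^{\D}_j)\ge a(S)$. The key auxiliary lemma is slightly sharper than your (b): under the inductive hypothesis one has $f(a(U^{\D}_{j'}))=\sum_{i\in U^{\D}_{j'}}\eta^{\D}_i$ for \emph{every} $j'<j$, proved by a short inner induction (if $\D_{j'}$ is not the class-maximum in $\D(j')$ then $U^{\D}_{j'}=U^{\D}_{j'-1}$, otherwise $U^{\D}_{j'}=W^{\D}_{j'}$ and your (b) applies). With this equality in hand, each of the two cases reduces to \emph{one} application of validity to a single auxiliary set ($S\setminus\D_j\cup\D_h$ in the first case, $S\setminus\D_j\cup U'$ with $U':=\{\max_{i\in N_k\cap\D(h-1)}i\}$ in the second) plus \emph{one} concavity estimate. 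No element-by-element exchanges are performed.

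Your exchange argument is where the proposal is genuinely vulnerable. Raising an element $i\in S'\cap N_{k'}$ to $i':=\max_{\ell\in N_{k'}\cap\D(j)}\ell$ changes $g_j$ by $[f(a(S)-a_i+a_{i'})-f(a(S))]-(\eta^{\D}_{i'}-\eta^{\D}_i)$, and to show this is nonpositive you need pointwise control of $\eta^{\D}_{i'}-\eta^{\D}_i$ relative to a marginal increment of $f$ at the \emph{current, intermediate} set $S$ --- precisely the ``auxiliary monotonicity lemma'' you flag but do not state. It is not clear such a lemma holds in the form you need, because the intermediate $a(S)$ bears no fixed relation to the sets $W^{\D}_{\cdot}$ at which the $\eta$'s were computed. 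Likewise, in your second case the phrase ``correct for \ldots the $S'$-elements outside $\D(h)$'' hides the same difficulty. The paper's approach sidesteps all of this by never comparing $\eta$'s pairwise: it only ever compares $\sum_{i\in T}\eta^{\D}_i$ to $f(a(T))$ for whole sets $T$, using validity for an inequality and the $U$-equality for an equality, and lets a single concavity step absorb the discrepancy between $a(S)$ and $a(W^{\D}_j)$.
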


We proceed with the proof in the remainder of this subsection.
We first show that $W^\D_j$ is a feasible solution for the lifting problem \eqref{prob:equlift}.
\begin{Lemma}\label{lem:feasibilityW}
	For $j\in[n]$, $W_j^\D$ is a feasible solution of  problem \eqref{prob:equlift}.
\end{Lemma}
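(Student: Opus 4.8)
The plan is to check directly that $W_j^\D$ satisfies the three constraints defining the feasible region of the set optimization problem \eqref{prob:equlift}, namely (a) $\D_j\in W_j^\D$; (b) $W_j^\D\subseteq \D(j)$; and (c) $|W_j^\D\cap N_k|\leq 1$ for every $k\in[t]$. Condition (a) is immediate from the definition $W_j^\D=U^\D_{h^\D_j}\backslash \D_{h^\D_j}\cup \D_j$. For condition (b), the first observation is that $h^\D_j\leq j$ in both branches of \eqref{problem:defh}: it equals $j$ in the first branch and lies in $[j-1]$ in the second. Hence $\D(h^\D_j)\subseteq \D(j)$, and combining this with the elementary inclusion $U^\D_{h^\D_j}\subseteq \D(h^\D_j)$ (each singleton $\{\max_{i\in N_k\cap\D(h^\D_j)}i\}$ is either empty or contained in $\D(h^\D_j)$) and with $\D_j\in\D(j)$, we get $W_j^\D=(U^\D_{h^\D_j}\backslash \D_{h^\D_j})\cup\D_j\subseteq \D(j)$.

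The substantive part is condition (c), which I would split according to whether the block in question is the one containing $\D_j$. Fix $k\in[t]$ with $\D_j\in N_k$. For any block $k'\neq k$, since $\D_j\notin N_{k'}$ we have $W_j^\D\cap N_{k'}\subseteq U^\D_{h^\D_j}\cap N_{k'}$, and the right-hand side has at most one element by \cref{lem:propertyU}. For the block $k$ itself I claim $W_j^\D\cap N_k=\{\D_j\}$. The key point is that $\D_{h^\D_j}\in N_k$: in the first branch of \eqref{problem:defh} this holds because $\D_{h^\D_j}=\D_j$, and in the second branch it is part of the defining condition of $h^\D_j$. Then $\D_{h^\D_j}\in U^\D_{h^\D_j}$ by \cref{lem:propertyh}, so $\D_{h^\D_j}\in U^\D_{h^\D_j}\cap N_k$; since $|U^\D_{h^\D_j}\cap N_k|\leq 1$ by \cref{lem:propertyU}, this forces $U^\D_{h^\D_j}\cap N_k=\{\D_{h^\D_j}\}$. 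Consequently $(U^\D_{h^\D_j}\backslash \D_{h^\D_j})\cap N_k=\varnothing$, and therefore $W_j^\D\cap N_k=\bigl((U^\D_{h^\D_j}\backslash \D_{h^\D_j})\cap N_k\bigr)\cup\{\D_j\}=\{\D_j\}$, which has cardinality one. This verifies all three constraints, so $W_j^\D$ is feasible for \eqref{prob:equlift}.

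I expect the only real obstacle to be the bookkeeping across the two cases in the definition of $h^\D_j$ and, in particular, making sure that \cref{lem:propertyh} is applied at the index $h^\D_j$ rather than $j$. The argument collapses neatly because both branches of the definition of $h^\D_j$ are engineered precisely so that $\D_{h^\D_j}$ is the largest-indexed element of block $k$ appearing in $\D(h^\D_j)$ — i.e., the unique representative of $N_k$ in $U^\D_{h^\D_j}$ — which is exactly the element that $W_j^\D$ deletes before inserting $\D_j$; this is what guarantees no conflict with the \GUB constraint of block $k$.
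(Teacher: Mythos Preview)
Your proof is correct and follows essentially the same approach as the paper's own proof: both verify the three feasibility conditions directly, using $h^\D_j\leq j$ together with $U^\D_{h^\D_j}\subseteq \D(h^\D_j)$ for the inclusion $W^\D_j\subseteq\D(j)$, and using \cref{lem:propertyU}(i), \cref{lem:propertyh}, and the fact that $\D_{h^\D_j}$ and $\D_j$ lie in the same block for the \GUB condition. Your treatment of condition (c) is somewhat more explicit than the paper's (you split into the cases $k'=k$ and $k'\neq k$ and spell out that $U^\D_{h^\D_j}\cap N_k=\{\D_{h^\D_j}\}$), but the underlying argument is identical.
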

\begin{proof}
	It suffices to show that $\D_j\in W^\D_j$, $W^\D_j\subseteq \D(j)$, and $|W^\D_j \cap N_k |\leq 1$ for all $k \in [t]$.
	$\D_j\in W^\D_j$ directly follows from the definition of $W_j^\D$ in \eqref{def:W}.
	From the definition of $h^\D_j$ in \eqref{problem:defh}, it follows  $h^\D_j\leq j$, which, together with 
	 $U^\D_{h^\D_j}\subseteq \D(h^\D_j)$, implies $U^\D_{h^\D_j}\subseteq \D(j)$ and thus $W^{\D}_j\subseteq \D(j)$.
	 Finally, for any $k \in [t]$, $|W^\D_j \cap N_k |\leq 1$ follows from $|U^\D_{h^\D_j} \cap N_k |\leq 1$ (by \cref{lem:propertyU}(i)), $\D_{h^\D_j} \in U_{h^\D_j}^\D$, and $\D_{h^\D_j},\D_j\in N_{k'}$ for some $k'\in [t]$. 
\end{proof}
Next, we establish the optimality of $W^\D_j$ for problem \eqref{prob:equlift} by induction on $j$.
We begin with the base case $j=1$.
\begin{Lemma}\label{lem:case j=1}
	If $j =1$, then $W_1^\D$ is an optimal solution of problem \eqref{prob:equlift} and $\eta_{\D_1}^\D= f(a_{\D_1})$.
\end{Lemma}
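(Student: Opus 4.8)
The plan is to exploit the fact that for $j=1$ the feasible region of the lifting problem \eqref{prob:equlift} is a singleton, so optimality is automatic, and then to check that the closed-form set $W_1^\D$ from \eqref{def:W} collapses to that same singleton.

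First I would pin down the feasible set of \eqref{prob:equlift} when $j=1$. Since $\D(1)=\{\D_1\}$, every feasible $S$ satisfies $S\subseteq\{\D_1\}$, and the constraint $\D_1\in S$ then forces $S=\{\D_1\}$. This $S$ trivially obeys $|S\cap N_k|\le 1$ for all $k\in[t]$ because $|S|=1$. Hence $\{\D_1\}$ is the unique feasible point, so it is optimal, and the optimal value equals $f(a(\{\D_1\}))-\sum_{i\in\{\D_1\}\backslash\D_1}\eta^\D_i=f(a_{\D_1})$, that is, $\eta^\D_{\D_1}=f(a_{\D_1})$.

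Second I would verify $W_1^\D=\{\D_1\}$, so that the stated closed form agrees with the unique optimizer. Let $k\in[t]$ be the block with $\D_1\in N_k$. Then $N_k\cap\D(1)=\{\D_1\}$, so $\D_1=\max_{i\in N_k\cap\D(1)}i$, and the first branch of \eqref{problem:defh} gives $h^\D_1=1$. Consequently $U^\D_1=\bigcup_{k'=1}^t\{\max_{i\in N_{k'}\cap\D(1)}i\}=\{\D_1\}$, since block $k$ contributes $\{\D_1\}$ and every other block contributes $\varnothing$ by the convention $\{\max_{i\in\varnothing}i\}=\varnothing$. Substituting into \eqref{def:W}, $W_1^\D=U^\D_{h^\D_1}\backslash\D_{h^\D_1}\cup\D_1=\{\D_1\}\backslash\{\D_1\}\cup\{\D_1\}=\{\D_1\}$. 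Therefore $W_1^\D$ is the (unique) optimal solution of \eqref{prob:equlift}, and $\eta^\D_{\D_1}=f(a(W_1^\D))-\sum_{i\in W_1^\D\backslash\D_1}\eta^\D_i=f(a_{\D_1})$, matching the formula asserted in \cref{thm:lifting}.

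There is no substantive obstacle: this statement is merely the anchor of the induction on $j$ in the proof of \cref{thm:lifting}. The only care needed is in unwinding the nested definitions of $h^\D_1$, $U^\D_1$, and $W^\D_1$ and in correctly applying the conventions $\max_{i\in\varnothing}i=0$ and $\{\max_{i\in\varnothing}i\}=\varnothing$; one could also cite \cref{lem:feasibilityW} for feasibility of $W_1^\D$ and \cref{lem:propertyh} for $\D_{h^\D_1}\in U^\D_{h^\D_1}$, but for $j=1$ both are immediate.
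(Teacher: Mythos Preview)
Your proposal is correct and follows the same approach as the paper: the paper's proof simply observes that $W_1^\D=\{\D_1\}$ is the only feasible solution of \eqref{prob:equlift} and hence optimal. You provide more detail by explicitly unwinding the definitions of $h^\D_1$, $U^\D_1$, and $W^\D_1$, but the underlying argument is identical.
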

\begin{proof}
	If $j=1$,  $W_1^\D= \{\D_1\}$ is the only feasible solution of problem \eqref{prob:equlift} and therefore is optimal.
\end{proof}
Now, suppose that $W^{\D}_j$ is an optimal solution of the lifting problem \eqref{prob:equlift} for all $j\in[r-1]$ where $r \in \{2,\ldots,n\}$; that is, 
\begin{equation}\label{induction hypothesis 1}
	\eta^{\D}_{\D_j}=f(a(W_j^\D))-\sum_{i\in W_j^\D \backslash\D_j}\eta^{\D}_i, ~\forall~ j\in[r-1].
\end{equation}
To show that $W^{\D}_r$ is an optimal solution of problem \eqref{prob:equlift} with $j=r$, we require the following two lemmas.
\begin{Lemma}\label{lem:validity}
	For any set $S \subseteq [n]$ satisfying $|S \cap N_k |\leq 1$ for $k\in[t]$, the inequality $f(a(S))\geq \sum_{i\in S}\eta^\D_i$ holds.
\end{Lemma}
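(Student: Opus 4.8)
The plan is to induct on the structure implied by the lifting sequence, using the fact that the lifting coefficients themselves were defined as minima over feasible sets of the lifting problem. Concretely, fix a set $S \subseteq [n]$ with $|S \cap N_k| \leq 1$ for all $k \in [t]$; I want to show $f(a(S)) \geq \sum_{i \in S} \eta^\D_i$. If $S = \varnothing$ the claim is $f(0) = 0 \geq 0$, which holds by assumption. Otherwise, let $j$ be the largest index such that $\D_j \in S$, so that $S \subseteq \D(j)$ and $\D_j \in S$. Then $S$ is a feasible solution of the lifting problem \eqref{prob:equlift} for this particular $j$, because it satisfies $S \subseteq \D(j)$, the \GUB constraints $|S \cap N_k| \leq 1$, and $\D_j \in S$. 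By the optimality of $\eta^\D_{\D_j}$ as the \emph{minimum} objective value of \eqref{prob:equlift}, we get $f(a(S)) - \sum_{i \in S \backslash \D_j} \eta^\D_i \geq \eta^\D_{\D_j}$, i.e.\ $f(a(S)) \geq \sum_{i \in S \backslash \D_j} \eta^\D_i + \eta^\D_{\D_j} = \sum_{i \in S} \eta^\D_i$, as desired.

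The subtlety is that $\eta^\D_{\D_j}$ being the optimal value of \eqref{prob:equlift} is exactly what is being established by the induction in \cref{thm:lifting} — we are in the middle of that induction, having assumed \eqref{induction hypothesis 1} only for $j \in [r-1]$, and this lemma is an intermediate step toward handling $j = r$. So I would phrase the argument relative to that induction: for any $S$ with $\max\{j : \D_j \in S\} \leq r-1$, the bound $f(a(S)) \geq \sum_{i\in S}\eta^\D_i$ follows from the induction hypothesis \eqref{induction hypothesis 1}, since $S$ is feasible for \eqref{prob:equlift} at index $j = \max\{i : \D_i \in S\} \leq r-1$ and $\eta^\D_{\D_j} = f(a(W^\D_j)) - \sum_{i \in W^\D_j \backslash \D_j} \eta^\D_i$ is the optimal (hence minimal) value of that problem by the inductive hypothesis. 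For $S$ with $\max\{j : \D_j \in S\} = r$ the same inequality is needed, and here one should be careful: this is precisely the content that \cref{thm:lifting} is about to prove for $j = r$; depending on how the authors order things, Lemma \ref{lem:validity} may be intended to be applied only to sets $S \subseteq \D(r-1)$ (i.e.\ only the already-established indices), in which case the statement should implicitly carry that restriction, or the lemma is proved by a self-contained induction on $|S|$ that does not circularly invoke \cref{thm:lifting}.

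Given that ambiguity, the cleanest self-contained route — and the one I would actually write — is a direct induction on $\max\{j : \D_j \in S\}$ (with the empty set as base case, value $0$), establishing simultaneously that $W^\D_j$ solves \eqref{prob:equlift}; but since \cref{thm:lifting} is stated separately, the intended reading is almost certainly that Lemma \ref{lem:validity} uses hypothesis \eqref{induction hypothesis 1} and applies to sets whose largest $\D$-index is at most $r-1$, with the $j=r$ case being handled inside the proof of \cref{thm:lifting} right after. The main obstacle is therefore not a computation but pinning down the precise inductive scope so the argument is not circular: one must verify that every $S$ to which the lemma is applied later (e.g.\ in the proof completing \cref{thm:lifting} for $j=r$) has its largest index strictly below $r$, so that \eqref{induction hypothesis 1} legitimately gives $\eta^\D_{\D_j} \leq f(a(S)) - \sum_{i \in S \backslash \D_j}\eta^\D_i$. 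Once that scoping is fixed, the proof is the two-line optimality-of-the-minimum argument above.
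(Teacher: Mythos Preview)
Your first paragraph is correct and is essentially the paper's proof, just unpacked: the paper simply observes that \eqref{LEPI} is a valid inequality for $X_0$ (by construction of sequential lifting) and then plugs in the point $(w,x)=(f(a(S)),\chi^S)\in X_0$. Your argument via ``take the largest $j$ with $\D_j\in S$ and use that $S$ is feasible for \eqref{prob:equlift}'' is exactly the standard reason why sequentially lifted inequalities are valid, so you are reproving what the paper cites in one line.

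Your worry about circularity, however, is misplaced. You write that ``$\eta^\D_{\D_j}$ being the optimal value of \eqref{prob:equlift} is exactly what is being established by the induction in \cref{thm:lifting}'', but that is not so: $\eta^\D_{\D_j}$ is \emph{defined} as the optimal value of the lifting problem \eqref{prob:liftX_0} (equivalently \eqref{prob:equlift}); see \cref{section3.1}. What \cref{thm:lifting} proves is only that the \emph{optimizer} has the explicit form $W^\D_j$. Hence the inequality $\eta^\D_{\D_j}\le f(a(S))-\sum_{i\in S\backslash\D_j}\eta^\D_i$ for any feasible $S$ holds for every $j\in[n]$ directly from the definition, with no appeal to \cref{thm:lifting} or to the induction hypothesis \eqref{induction hypothesis 1}. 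There is no scoping issue: the lemma is valid for all $S$ satisfying the \GUB constraints, including those with largest $\D$-index equal to $r$ (and indeed the paper applies it to such sets in the proof of \cref{lem:induction}). You can therefore delete the second and third paragraphs entirely.
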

\begin{proof}
	This follows directly from the validity of \eqref{LEPI} for $X_0$ and the observation that for any such set $S$, the point $(w,x)= (f(a(S)), \chi^S)$ belongs to $X_0$.
\end{proof}
\begin{Lemma}\label{lem:propertyeta}
	Suppose that \eqref{induction hypothesis 1} holds. Then it follows that
	\begin{equation}\label{property:eta}
		f(a(U^\D_j))=\sum_{i\in U^{\D}_j}\eta^{\D}_i, ~\forall~ j\in[r-1].
	\end{equation}
\end{Lemma}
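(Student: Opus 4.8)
The plan is to prove the identity $f(a(U^\D_j))=\sum_{i\in U^{\D}_j}\eta^{\D}_i$ for each $j\in[r-1]$ by induction on $j$, relying on the induction hypothesis \eqref{induction hypothesis 1} (which asserts that $W^\D_\ell$ solves the lifting problem for all $\ell\le r-1$) together with the already-established \cref{lem:validity}. First I would fix $j\in[r-1]$ and let $k\in[t]$ be such that $\D_j\in N_k$. The key case distinction mirrors the definition of $h^\D_j$ in \eqref{problem:defh}: either $\D_j=\max_{i\in N_k\cap\D(j)}i$, in which case $\D_j\in U^\D_j$ and $h^\D_j=j$, or $\D_j<\max_{i\in N_k\cap\D(j)}i$, in which case $h^\D_j<j$ and $\D_j\notin U^\D_j$.

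In the first case, $U^\D_j=(U^\D_{j-1}\setminus\D_{h^\D_j}')\cup\D_j$ where the removed element (if any) is the previous maximum in $N_k\cap\D(j-1)$; more cleanly, $W^\D_j = U^\D_{h^\D_j}\setminus\D_{h^\D_j}\cup\D_j = U^\D_j$ since $h^\D_j=j$ gives $U^\D_{h^\D_j}=U^\D_j$ and $\D_{h^\D_j}=\D_j$, so $W^\D_j=U^\D_j$. Then \eqref{induction hypothesis 1} reads $\eta^\D_{\D_j}=f(a(U^\D_j))-\sum_{i\in U^\D_j\setminus\D_j}\eta^\D_i$, which rearranges immediately to the desired identity. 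In the second case, $\D_j\notin U^\D_j$, and in fact $U^\D_j=U^\D_{j-1}$ because adding $\D_j$ does not change the maximum of $N_k\cap\D(\cdot)$ and leaves all other $N_{k'}$ untouched; here I would invoke the inner induction hypothesis (the identity for $j-1$) to conclude $f(a(U^\D_j))=f(a(U^\D_{j-1}))=\sum_{i\in U^\D_{j-1}}\eta^\D_i=\sum_{i\in U^\D_j}\eta^\D_i$. The base case $j=1$ is immediate: $U^\D_1=\{\D_1\}$ and $\eta^\D_{\D_1}=f(a_{\D_1})$ by \cref{lem:case j=1}.

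I expect the main obstacle to be the bookkeeping in the first case — verifying that $W^\D_j$ genuinely equals $U^\D_j$ when $h^\D_j=j$, and checking that the set $U^\D_j\setminus\D_j$ appearing in \eqref{induction hypothesis 1} is a legitimate feasible set (it satisfies $|(U^\D_j\setminus\D_j)\cap N_{k'}|\le 1$ for all $k'$ by \cref{lem:propertyU}(i)) so that the $\eta$-values on it are the ones referenced by the lifting recursion. A secondary subtlety is making the transition $U^\D_j=U^\D_{j-1}$ in the second case airtight: one must confirm that $\D_j<\max_{i\in N_k\cap\D(j)}i$ forces the maximizing index in $N_k\cap\D(j)$ to already lie in $\D(j-1)$, hence $\max_{i\in N_k\cap\D(j)}i=\max_{i\in N_k\cap\D(j-1)}i$, and that all components indexed by $k'\ne k$ are unchanged since $\{N_{k'}\}$ partition $[n]$. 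Once these set identities are pinned down, the arithmetic is a one-line rearrangement, and \cref{lem:validity} is not strictly needed for this lemma — though it may serve as a sanity check that $\sum_{i\in U^\D_j}\eta^\D_i$ never exceeds $f(a(U^\D_j))$, consistent with the claimed equality.
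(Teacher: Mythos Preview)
Your proposal is correct and follows essentially the same approach as the paper: induction on $j$ with base case $j=1$ via \cref{lem:case j=1}, and the same two-case split according to whether $\D_j=\max_{i\in N_k\cap\D(j)}i$ (where $W^\D_j=U^\D_j$ and \eqref{induction hypothesis 1} rearranges directly) or not (where $U^\D_j=U^\D_{j-1}$ and the inner induction hypothesis applies). Your observation that \cref{lem:validity} is not actually needed here is also accurate.
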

\begin{proof}
	We prove the statement by induction on $j$.
	If $j=1$, then $U_1^\D= \{\D_1\}$, and by \cref{lem:case j=1}, it follows
	\begin{equation*}
		\sum_{i\in U^\D_1}\eta^\D_i=\eta^{\D}_{\D_1} =f(a_{\D_1})= f(a(U^\D_1)).
	\end{equation*}
	Suppose that the statement holds for $j=1, \ldots, \tau-1$ where $\tau$ is an integer satisfying $2 \leq \tau \leq r-1$.
	Now we consider the case $j=\tau$.
	Let $k' \in [t]$ be such that $\D_\tau\in N_{k'}$.
	We consider the following two cases.\\[8pt]
	(i) $\D_\tau=\max_{i\in N_{k'}\cap\D(\tau)}i$. Then, by the definition of $h^\D_\tau$ in \eqref{problem:defh} and the definition of $W^\D_\tau$ in \eqref{def:W}, we have $\D_\tau \in W^\D_\tau=U^\D_\tau$.
	Therefore, by \eqref{induction hypothesis 1}, it follows
	$\eta^{\D}_{\D_\tau}=f(a(U^\D_\tau))-\sum_{i\in U^{\D}_\tau\backslash\D_\tau}\eta^{\D}_i$,
	and thus, \eqref{property:eta} holds for $j=\tau$ in this case.\\[8pt]
	(ii) $\D_\tau<\max_{i\in N_{k'}\cap\D(\tau)}i$.
	Then $\max_{i\in N_{k'}\cap\D(\tau)}i=\max_{i\in N_{k'}\cap\D(\tau-1)}i$.
	Given any $k\in[t]\backslash k'$, it follows from $\D_\tau\notin N_k$ 
	that $\max_{i\in N_{k}\cap\D(\tau)}i=\max_{i\in N_{k}\cap\D(\tau-1)}i$.
	Therefore,
	\begin{equation*}
		U^{\D}_\tau=\bigcup^t_{k=1}\left\{\max_{i\in N_k\cap\D(\tau)}i\right\}=\bigcup^t_{k=1}\left\{\max_{i\in N_k\cap\D(\tau-1)}i\right\}=U^{\D}_{\tau-1}.
	\end{equation*}
	As a result,
	\begin{equation*}
		f(a(U^\D_\tau)) = f(a(U^\D_{\tau-1})) 
		\stackrel{(a)}{=}\sum_{i\in U^{\D}_{\tau-1}}\eta^{\D}_i
		= \sum_{i\in U^{\D}_\tau}\eta^{\D}_i,
	\end{equation*}
	where (a) follows from the induction hypothesis applied to $j = \tau -1$. 
\end{proof}
We now complete the induction step by proving that $W^{\D}_r$ is an optimal solution of problem \eqref{prob:equlift} with $j=r$.
\begin{Lemma}\label{lem:induction}
	Suppose that the induction hypothesis \eqref{induction hypothesis 1} holds. Then $W^{\D}_r$ is an optimal solution of problem \eqref{prob:equlift} with $j=r$.
\end{Lemma}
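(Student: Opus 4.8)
The plan is to show that $W_r^\D$ attains the minimum in the set optimization problem \eqref{prob:equlift} with $j=r$. Recall the objective is $\phi(S):=f(a(S))-\sum_{i\in S\backslash\D_r}\eta_i^\D$ over feasible sets $S\subseteq\D(r)$ with $|S\cap N_k|\le 1$ for all $k\in[t]$ and $\D_r\in S$. By \cref{lem:feasibilityW}, $W_r^\D$ is feasible, so it remains to prove $\phi(S)\ge \phi(W_r^\D)$ for every feasible $S$. First I would split $S=(S\backslash\D_r)\cup\D_r$ and, using submodularity/concavity of $f$ composed with the nonnegative linear map $a^\top\cdot$, bound $f(a(S))=f(a(S\backslash\D_r)+a_{\D_r})$ from below. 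The key quantity to control is $a(S\backslash\D_r)$: since $S\backslash\D_r\subseteq\D(r)$ picks at most one index from each $N_k$, \cref{lem:propertyU}(ii) gives $a(S\backslash\D_r)\le a(U_{r-1}^\D)$ — wait, more carefully $S\backslash\D_r\subseteq\D(r-1)$ when $\D_r\notin S\backslash\D_r$, but $\D_r$ could force us to think about which $N_k$ it lies in; the relevant comparison set is $U_{h_r^\D}^\D$, reflecting the definition of $h_r^\D$.

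The core of the argument is a case analysis mirroring \cref{lem:propertyeta}. In case (i), $\D_r=\max_{i\in N_{k'}\cap\D(r)}i$, so $h_r^\D=r$ and $W_r^\D=U_r^\D$; here I would use that any feasible $S$ satisfies $a(S)\le a(U_r^\D)$ by \cref{lem:propertyU}(ii), combine with concavity of $f$ to get $f(a(S))\le f(a(U_r^\D))$ when... no — $f$ is concave, not monotone, so I cannot push this inequality in the needed direction directly. Instead the right tool is \cref{lem:validity} applied to $S\backslash\D_r$ (or to an augmentation of it): $f(a(S))\ge\sum_{i\in S}\eta_i^\D$ would give $\phi(S)\ge\eta_{\D_r}^\D$, and then I must show $\phi(W_r^\D)=\eta_{\D_r}^\D$, which is the definition. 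So actually the inequality $\phi(S)\ge\phi(W_r^\D)$ follows once I establish $f(a(S))\ge\sum_{i\in S}\eta_i^\D$ for all feasible $S$ — but that is exactly \cref{lem:validity}, which however presupposes validity of \eqref{LEPI}, i.e., presupposes the very lifting coefficients being well-defined up through index $r$. The honest route is therefore: use the induction hypothesis \eqref{induction hypothesis 1} and \cref{lem:propertyeta} to evaluate $\phi(W_r^\D)$ explicitly as $f(a(W_r^\D))-\sum_{i\in W_r^\D\backslash\D_r}\eta_i^\D = f(a(W_r^\D)) - f(a(U_{h_r^\D}^\D)) + \eta_{\D_{h_r^\D}}^\D$ after substituting \eqref{property:eta}, and then compare against a general feasible $S$ using submodularity to bound the marginal gain of adding $\D_r$.

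Concretely, for a general feasible $S$ write $T:=S\backslash\D_r$, a set hitting each $N_k$ at most once and contained in $\D(r-1)\cup\D_r$; since $\D_r\notin T$, in fact $T\subseteq\D(r-1)$. Then $\phi(S)=f(a(T)+a_{\D_r})-\sum_{i\in T}\eta_i^\D$. The induction hypothesis lets me lower-bound $\sum_{i\in T}\eta_i^\D\le f(a(T))$ only if I know \eqref{LEPI} restricted to $\D(r-1)$ is valid — which is \cref{lem:facet}/\cref{lem:validity} at level $r-1$, available by induction. So $\phi(S)\ge f(a(T)+a_{\D_r})-f(a(T))$, the marginal value of $\D_r$ at $T$. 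By submodularity this marginal is minimized when $a(T)$ is as large as possible among feasible $T$, and \cref{lem:propertyU}(ii) identifies that maximizer: among $T\subseteq\D(r-1)$ with one element per $N_k$, $a(T)\le a(U_{r-1}^\D)$; but I must instead take $T$ to avoid the class of $\D_r$ appropriately, landing on $T^*:=U_{h_r^\D}^\D\backslash\D_{h_r^\D}$, whence $T^*\cup\D_r=W_r^\D$. This yields $\phi(S)\ge f(a(W_r^\D))-f(a(T^*))\ge f(a(W_r^\D))-\sum_{i\in T^*}\eta_i^\D=\phi(W_r^\D)$, using \cref{lem:validity} (level $r-1$) on $T^*$ for the last inequality and \eqref{property:eta} to identify the sum. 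The main obstacle is the bookkeeping around $h_r^\D$: in case (ii), $\D_r$ is not the max of its class in $\D(r)$, so the "worst" feasible $T$ is forced to include the larger index $\D_{h_r^\D}\in N_{k'}$ rather than $\D_r$'s slot being free, and one must verify via the monotonicity \eqref{asumption:a} and the submodular marginal inequality that swapping in $\D_r$ for that index cannot decrease $\phi$ below $\phi(W_r^\D)$ — this is where concavity of $f$ and the precise definition of $W_r^\D$ interlock, and getting the direction of every inequality right is the delicate part.
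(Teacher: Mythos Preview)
Your chain has a genuine gap at the last inequality. You write
\[
\phi(S)\ \ge\ f(a(W_r^\D))-f(a(T^*))\ \ge\ f(a(W_r^\D))-\sum_{i\in T^*}\eta_i^\D\ =\ \phi(W_r^\D),
\]
but the second ``$\ge$'' requires $f(a(T^*))\le\sum_{i\in T^*}\eta_i^\D$, whereas \cref{lem:validity} gives the \emph{opposite} direction. More fundamentally, already the intermediate bound $\phi(S)\ge f(a(T)+a_{\D_r})-f(a(T))$ (obtained by applying validity to $T=S\backslash\D_r$) is too lossy to recover $\phi(W_r^\D)$. In \cref{Example1} with $\D=(1,3,2)$ and $r=3$, one computes $\phi(W_r^\D)=-10$ but your marginal bound at $T=\{3\}$ gives $f(5)-f(3)=-16<-10$; with $\D=(2,3,1)$ and $r=3$, $\phi(W_r^\D)=-1$ but the bound at $T=\{3\}$ gives $f(4)-f(3)=-7<-1$. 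In the second of these you also see that $T^*=U_{h_r^\D}^\D\backslash\D_{h_r^\D}=\varnothing$ is \emph{not} the maximizer of $a(T)$ (which is $\{3\}$), so the submodular-marginal step ``minimize the marginal by taking $a(T)$ largest, landing on $T^*$'' is unjustified whenever $h_r^\D<r$.

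The paper's proof never discards the slack $f(a(T))-\sum_{i\in T}\eta_i^\D$. It compares $f(a(S))-f(a(W_r^\D))$ directly, splitting on whether $a(W_r^\D)<a(S)$ or $a(W_r^\D)\ge a(S)$. In each case it uses concavity to shift the increment $a_{\D_r}$ to a carefully chosen amount ($a_{\D_{h_r^\D}}$ in case~(i), $a(U')$ with $U'=\{\max_{i\in N_{k'}\cap\D(h_r^\D-1)}i\}$ in case~(ii)), then applies \cref{lem:validity} to the \emph{augmented} set $S\backslash\D_r\cup\D_{h_r^\D}$ or $S\backslash\D_r\cup U'$, and crucially uses the \emph{equality} from \cref{lem:propertyeta} on the matching $U_{h_r^\D}^\D$ or $U_{h_r^\D-1}^\D$. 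The point is that equality $f(a(\cdot))=\sum\eta_i^\D$ is available only on sets of the form $U_j^\D$, so the argument must be arranged so that the ``$-\sum\eta$'' side lands on some $U_j^\D$, not on $T^*=U_{h_r^\D}^\D\backslash\D_{h_r^\D}$. Your outline does not reach this arrangement.
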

\begin{proof}
	Letting $S$ be an arbitrary feasible solution of the lifting problem \eqref{prob:equlift} with $j=r$, then 
	$\D_r\in S$, $S\subseteq\D(r)$, and $|S \cap N_k |\leq 1$ for $k\in[t]$.
	By \cref{lem:feasibilityW}, it suffices to show $f(a(W^\D_r))-\sum_{i\in W^{\D}_r\backslash \D_r} \eta^\D_i \leq f(a(S))-\sum_{i\in S\backslash \D_r}\eta^\D_i$, 
	or equivalently, 
	\begin{equation}\label{ourgoal2}
		 f(a(S))-f(a(W^\D_r)) \geq \sum_{i\in S\backslash \D_r}\eta^\D_i-\sum_{i\in W^{\D}_r\backslash \D_r} \eta^\D_i.
	\end{equation}
	Let $k' \in [t]$ be such that  $\D_r\in N_{k'}$.
    We next consider the following two cases (i) $a(W^{\D}_r)<a(S)$ and (ii) $a(W^{\D}_r)\geq a(S)$, separately. \\[8pt]
	{(i)} $a(W^{\D}_r)<a(S)$.
 	If $h^\D_r=r$, then $W^\D_r=U^\D_r$, which, 
	together with \cref{lem:propertyU}(ii), implies $a(S)\leq a(U^{\D}_r)=a(W^{\D}_r)$,
	a contradiction.
	Thus, by the definition of $h^\D_r$ in \eqref{problem:defh}, $h^\D_r< r$ and $\D_{h^\D_r}> \D_{r}$ must hold.
	Observe that
	\begin{equation}\label{tmpeq1}
		\begin{aligned}
			 &\sum_{i\in S\backslash\D_r}
			\eta^{\D}_i-\sum_{i\in W^{\D}_{r}\backslash\D_{r}}\eta^{\D}_i
			 \stackrel{(a)}{=} \sum_{i\in S\backslash \D_r}
			\eta^{\D}_i-\sum_{i\in U^{\D}_{h^\D_r}\backslash\D_{h^\D_r}}\eta^{\D}_i \\
			&\qquad \stackrel{(b)}{=} \sum_{i\in S\backslash \D_r\cup\D_{h^\D_r}}
			\eta^{\D}_i-\sum_{i\in U^{\D}_{h^\D_r}}\eta^{\D}_i 
			\stackrel{(c)}{=}  \sum_{i\in S\backslash \D_r\cup\D_{h^\D_r}}\eta^{\D}_i-f(a(U^{\D}_{h^\D_r})),
		\end{aligned}
	\end{equation}
	where (a) follows from the definition of $W^\D_j$ in \eqref{def:W} and $\D_r\notin U^\D_{h^\D_r}\backslash \D_{h^\D_r}$;
	(b) follows from $\D_{h^\D_r}\notin S\backslash \D_r$ (by $\D_r,\D_{h^\D_r}\in N_{k'}$, $|S \cap N_{k'}|\leq 1$, and $\D_r \in S$) and $\D_{h^\D_r}\in U^{\D}_{h^\D_r}$ (by \cref{lem:propertyh});
	and (c) follows from the induction hypothesis in \eqref{induction hypothesis 1} and \cref{lem:propertyeta}; that is, \eqref{property:eta} with $j=h^\D_r<r$ holds.
	 On the other hand,
	 \begin{equation}\label{tmpeq2}
	 	\begin{aligned}
	 		&f(a(S))-f(a(W^{\D}_r)) \stackrel{(a)}{=} f(a(S\backslash \D_r)+a_{\D_r})
	 		-f(a(U^{\D}_{h^\D_r}\backslash \D_{h^\D_r})+a_{\D_r})\\
	 		&\qquad \stackrel{(b)}{\geq} f(a(S\backslash \D_r)+a_{\D_{h^\D_r}})
	 		-f(a(U^{\D}_{h^\D_r}\backslash \D_{h^\D_r})+a_{\D_{h^\D_r}}) \\
	 		&\qquad \stackrel{(c)}{=}f(a(S\backslash \D_r\cup\D_{h^\D_r}))
	 		-f(a(U^{\D}_{h^\D_r}))\stackrel{(d)}{\geq} \sum_{i\in S\backslash \D_r\cup\D_{h^\D_r}}
	 		\eta^{\D}_i-f(a(U^{\D}_{h^\D_r})),
	 	\end{aligned}
	 \end{equation}
	 where (a) follows from $\D_r\in S$ and $\D_r\notin U^\D_{h^\D_r}\backslash \D_{h^\D_r}$; 
	 (b) follows from  $a(S\backslash \D_r)= a(S)-a_{\D_r}> 
	 a(W^{\D}_r)-a_{\D_r}= a(U^{\D}_{h^\D_r}\backslash\D_{h^\D_r})$,
	 $a_{\D_{h^\D_r}}\geq a_{\D_r}$ (by $\D_{h^\D_r},\D_r\in N_{k'}$, $\D_{h^\D_r}> \D_{r}$, 
	 and \eqref{asumption:a}), and the concavity of $f$ (that is, $f(y_2)-f(y_1)\geq f(y_2+d)-f(y_1+d)$ for any $d\in\R_+$ and $y_1,y_2\in\R$ with $y_1\leq y_2$);
	 (c) follows from $\D_{h^\D_r}\notin S\backslash \D_r$ 
	 (by $\D_r,\D_{h^\D_r}\in N_{k'}$ and $|S \cap N_{k'}|\leq 1$)
	 and $\D_{h^\D_r}\in U^{\D}_{h^\D_r}$ (by \cref{lem:propertyh});
	 and (d) follows from \cref{lem:validity} and $|(S\backslash \D_r\cup \D_{h^\D_r})\cap N_k|\leq 1$ for $k \in [t]$.
	 Combining \eqref{tmpeq1} and \eqref{tmpeq2} yields the desired result in \eqref{ourgoal2}.\\[8pt]
	{(ii)} $a(W^{\D}_r)\geq a(S)$.
	Then, by $W_r^\D= U^\D_{ h^\D_r}\backslash \D_{h^\D_r}\cup \D_r$, $\D_r \notin U^\D_{ h^\D_r}\backslash \D_{h^\D_r}$, and $\D_r \in S$, it follows
	\begin{equation}\label{tmpeq3}
		a(U^\D_{h^\D_r}\backslash\D_{h^\D_r})
		= a(W^\D_r)-a_{\D_r}\geq a(S)-a_{\D_r}= a(S\backslash \D_r).
	\end{equation}
	Let
	\begin{equation}\label{defU'}
		U':=
		\begin{cases}
			\varnothing & 
			~\text{if}~  N_{k'}\cap\D(h^\D_r-1)=\varnothing,\\
			\{\max_{i\in N_{k'}\cap\D(h^\D_r-1)}i\},
			& ~\text{otherwise}.
		\end{cases}
	\end{equation}
	By definition, it follows that
	\begin{equation}\label{J'-1J'}
		U^\D_{h^\D_r-1}\backslash U'= \bigcup_{k\in [t]\backslash k'}
		\left\{\max_{i\in N_k\cap\D(h^\D_r-1)}i\right\}= \bigcup_{k\in [t]\backslash k'}
		\left\{\max_{i\in N_k\cap\D(h^\D_r)}i\right\}=U^\D_{h^\D_r}\backslash \D_{h^\D_r}
	\end{equation}
	and hence
	\begin{equation}\label{tmpeq4}
		a(U^{\D}_{h^\D_r-1})= a(U^{\D}_{h^\D_r}\backslash\D_{h^\D_r})+a(U').
	\end{equation}
	From the definition of $h^\D_r$ in \eqref{problem:defh}, 
	$\D_{\ell}<\D_r$ holds for all $\ell<h^\D_r$ with $\D_{\ell}\in N_{k'}$.
	This, together with \eqref{asumption:a} and the definition of $U'$ in \eqref{defU'}, implies $a(U')\leq a_{\D_r}$.
	To prove the desired result in \eqref{ourgoal2}, observe that
	\begin{equation*}
		\begin{aligned}
			&f(a(S))-f(a(W^{\D}_r))=-[f(a(U^{\D}_{h^\D_r}
			\backslash \D_{h^\D_r})+a_{\D_r})-f(a(S\backslash \D_r)+a_{\D_r})]\\
			&\qquad \stackrel{(a)}{\geq} -[f(a(U^\D_{h^\D_r}\backslash \D_{h^\D_r})+a(U'))-f(a(S\backslash \D_r)+a(U'))]
			\stackrel{(b)}{=}f(a(S\backslash \D_r\cup U'))-f(a(U^{\D}_{h^\D_r-1}))\\
			&\qquad \stackrel{(c)}{\geq} \sum_{i\in S\backslash \D_r\cup U'}\eta^{\D}_i-f(a(U^{\D}_{h^\D_r-1}))
			\stackrel{(d)}{=}\sum_{i\in S\backslash \D_r\cup U'}\eta^{\D}_i- \sum_{i\in U^{\D}_{h^\D_r-1}}\eta^{\D}_i\\
			&\qquad \stackrel{(e)}{=} 
			\sum_{i\in S\backslash \D_r}\eta^{\D}_i-\sum_{i\in U^{\D}_{h^\D_r-1}\backslash U'}\eta^{\D}_i
			\stackrel{(f)}{=} \sum_{i\in S\backslash\D_r}\eta^{\D}_i-\sum_{i\in W^{\D}_{r}\backslash\D_{r}}
			\eta^{\D}_i,
		\end{aligned}
	\end{equation*} 
	where (a) follows from \eqref{tmpeq3}, $a(U')\leq a_{\D_r}$, and the concavity of $f$;
	(b) follows from \eqref{tmpeq4};
	(c) follows from \cref{lem:validity} and $|(S\backslash \D_r\cup U')\cap N_k|\leq 1$ for $k \in [t]$;
	(d) follows from the induction hypothesis in \eqref{induction hypothesis 1} and \cref{lem:propertyeta}, that is, \eqref{property:eta} with $j=h^\D_r-1\leq r-1$ holds;
	(e) follows from $U'\cap (S\backslash \D_r)=\varnothing$ and $U'\subseteq U^{\D}_{h^\D_r-1}$; and 
	(f) follows from \eqref{J'-1J'} and $U^\D_{h^\D_r}\backslash \D_{h^\D_r}=W^\D_r\backslash \D_r$. 
\end{proof}
\begin{proof}{(Proof of \cref{thm:lifting})}
	Combining Lemmas \ref{lem:case j=1} and \ref{lem:induction}, we obtain the desired result in \cref{thm:lifting}.
\end{proof}

Given any permutation $\D$ of $[n]$, it follows from \eqref{problem:defh} and \cref{thm:lifting} that
\begin{Proposition}\label{prop:complex}
 The lifting coefficients $\{\eta^\D_{\D_j}\}_{j\in [n]}$ can be computed in $O(n^2)$ time.
\end{Proposition}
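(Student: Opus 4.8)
The plan is to turn the closed-form expression of \cref{thm:lifting} into an explicit dynamic-programming recursion along the lifting sequence $\D$ and then bound the per-step cost. First I would process the positions $j=1,2,\dots,n$ in the order prescribed by $\D$, maintaining an array $M\in\Z^t$ with $M[k]=\max_{i\in N_k\cap\D(j)}i$ (and $M[k]=0$ when $N_k\cap\D(j)=\varnothing$); this array encodes $U^\D_j$ and is updated in $O(1)$ per step, since only the entry for the block containing $\D_j$ can change. At step $j$, let $k$ be the block with $\D_j\in N_k$ (available in $O(1)$ from the indexing convention \eqref{assumption:Nk}). By \eqref{problem:defh}, deciding which case of $h^\D_j$ applies is just a comparison of $\D_j$ with the pre-update value $M[k]$, and in the second case $h^\D_j=\min\{\ell\in[j-1]:\D_\ell\in N_k,\ \D_\ell>\D_j\}$ is obtained by a single scan over $\ell=1,\dots,j-1$, costing $O(n)$.

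Next I would arrange access to the \emph{historical} set $U^\D_{h^\D_j}$ (note $h^\D_j\le j$ may be strictly smaller than $j$): either store, for each position, a snapshot of $M$, which costs $O(n)$ per step and $O(n^2)$ overall and makes $U^\D_{h^\D_j}$ retrievable in $O(n)$ time; or simply recompute $U^\D_{h^\D_j}$ by a scan over $\D(h^\D_j)$, again $O(n)$. With $h^\D_j$ and $U^\D_{h^\D_j}$ in hand, I would form $W^\D_j=U^\D_{h^\D_j}\backslash\D_{h^\D_j}\cup\D_j$ in $O(n)$ time, compute $a(W^\D_j)=\sum_{i\in W^\D_j}a_i$ in $O(n)$ time, evaluate $f$ at this value with a single oracle call, and set $\eta^\D_{\D_j}=f(a(W^\D_j))-\sum_{i\in W^\D_j\backslash\D_j}\eta^\D_i$. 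The correctness point I would state explicitly is that the trailing sum references only already-computed coefficients: since $W^\D_j\subseteq\D(j)$ and $\D_j\notin W^\D_j\backslash\D_j$, every index in $W^\D_j\backslash\D_j$ equals $\D_\ell$ for some $\ell<j$, so $\eta^\D_{\D_\ell}$ was fixed at an earlier step. Hence each of the $n$ steps runs in $O(n)$ time, giving a total of $O(n^2)$.

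There is no deep obstacle: essentially all the mathematical content sits in \cref{thm:lifting}, and what remains is careful bookkeeping. The only points deserving attention are (i) the need to access $U^\D_{h^\D_j}$ at a past position rather than only the current one, handled by snapshots or an $O(n)$ recomputation; (ii) the verification of the dependency order for the $\eta$-recursion described above; and (iii) the standard convention that arithmetic operations and evaluations of $f$ are $O(1)$, under which the bound is exactly $O(n^2)$.
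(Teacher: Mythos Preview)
Your proposal is correct and follows the same approach as the paper: the paper simply asserts that the $O(n^2)$ bound follows from the definition of $h^\D_j$ in \eqref{problem:defh} and the closed-form formula of \cref{thm:lifting}, and you have spelled out precisely the per-step $O(n)$ bookkeeping (computing $h^\D_j$, recovering $U^\D_{h^\D_j}$, forming $W^\D_j$, and evaluating the recursion) that makes this claim rigorous. The dependency-order check you give---that $W^\D_j\backslash\D_j\subseteq\D(j-1)$ so all required $\eta$-values are already available---is exactly the point the paper leaves implicit.
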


\subsection{Strength of \LEPIs over \EPIs}\label{section3.3}
In this subsection, we show that  \LEPIs are stronger than \EPIs in the sense that for a fixed permutation $\D$ of $[n]$, \eqref{LEPI} is at least as strong as \eqref{EPI}.
\begin{Theorem}\label{thm:LEPI-EPI}
	Given any permutation $\D$ of $[n]$, it follows that $\eta^\D_{\D_j}\geq \rho_{\D_j}=f(a(\D(j)))-f(a(\D(j-1)))$ for $j\in [n]$.
\end{Theorem}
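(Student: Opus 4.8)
The plan is to avoid any fresh induction on $j$ and instead read off the value of $\eta^\D_{\D_j}$ directly from the closed-form optimal solution $W^\D_j$ already produced in \cref{thm:lifting}, and then bound it from below using two facts that are likewise already at hand: the validity inequality of \cref{lem:validity} and the ``diminishing returns'' consequence of the concavity of $f$.

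Concretely, fix $j\in[n]$. First I would invoke \cref{thm:lifting} to write $\eta^\D_{\D_j}=f(a(W^\D_j))-\sum_{i\in W^\D_j\backslash\D_j}\eta^\D_i$. Next, since $W^\D_j$ is feasible for the lifting problem \eqref{prob:equlift} by \cref{lem:feasibilityW}, the subset $W^\D_j\backslash\D_j$ still satisfies $|(W^\D_j\backslash\D_j)\cap N_k|\le 1$ for every $k\in[t]$, so \cref{lem:validity} applied to $S=W^\D_j\backslash\D_j$ gives $\sum_{i\in W^\D_j\backslash\D_j}\eta^\D_i\le f(a(W^\D_j\backslash\D_j))$. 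Substituting this and using $\D_j\in W^\D_j$ (so that $a(W^\D_j)=a(W^\D_j\backslash\D_j)+a_{\D_j}$), I obtain $\eta^\D_{\D_j}\ge f(a(W^\D_j\backslash\D_j)+a_{\D_j})-f(a(W^\D_j\backslash\D_j))$. The remaining step is to compare this marginal increment with $\rho_{\D_j}=f(a(\D(j-1))+a_{\D_j})-f(a(\D(j-1)))$: because $W^\D_j\subseteq\D(j)$ and $\D_j\in W^\D_j$ force $W^\D_j\backslash\D_j\subseteq\D(j-1)$, and $a\in\R^n_+$, we get $a(W^\D_j\backslash\D_j)\le a(\D(j-1))$; then the concavity of $f$ (the increment $f(y+a_{\D_j})-f(y)$ is nonincreasing in $y$) yields $\eta^\D_{\D_j}\ge \rho_{\D_j}$, using also $a(\D(j-1))+a_{\D_j}=a(\D(j))$. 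That is the claim.

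I do not anticipate a real technical obstacle; the only conceptually subtle point is that the statement is \emph{not} simply ``the \LEPI lifting problem has more constraints, hence a larger optimal value than the \EPI one'': the \LEPI objective subtracts the \emph{larger} coefficients $\eta^\D_i$ rather than the \EPI coefficients $\rho_{\D_i}$, so the two lifting problems cannot be compared term by term. The device that resolves this is \cref{lem:validity}, which lets me re-absorb the negative term $-\sum_{i\in W^\D_j\backslash\D_j}\eta^\D_i$ into $-f(a(W^\D_j\backslash\D_j))$; after that only the sign of $a$ and the concavity of $f$ are needed, and in particular no appeal to an inductive hypothesis of the form $\eta^\D_{\D_i}\ge\rho_{\D_i}$ for $i<j$ is required.
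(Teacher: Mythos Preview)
Your proof is correct and follows essentially the same route as the paper's: both apply \cref{lem:validity} to replace $\sum_{i\in S\backslash\D_j}\eta^\D_i$ by $f(a(S\backslash\D_j))$ and then use the concavity of $f$ together with $a\in\R^n_+$ to compare the resulting increment with $\rho_{\D_j}$. The only cosmetic difference is that the paper carries out this bound uniformly over every feasible $S$ in the lifting problem \eqref{prob:equlift} (and hence never invokes \cref{thm:lifting}), whereas you instantiate it at the known optimizer $S=W^\D_j$; the argument and the ingredients are otherwise identical.
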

\begin{proof}
	Let any $j\in [n]$ be given.
	Note that for any set $S\subseteq [n]$ satisfying $|S\cap N_k|\leq 1$ for $k\in [t]$, it follows from \cref{lem:validity} that
	$f(a(S))-\sum_{i\in S\backslash\D_j}\eta^{\D}_i\geq f(a(S))-f(a(S\backslash\D_j))$.
	As a result,
	\begin{equation*}
		\begin{aligned}
			\eta^\D_{\D_j}=&\min_S \left\{  f(a(S))-\sum_{i\in S \backslash\D_j}\eta^{\D}_i \,:\,
			S \subseteq \D(j), ~|S\cap N_k|\leq 1, ~\forall~ k\in [t], ~\D_j \in S \right\},\\
			\geq& \min_S \left\{  f(a(S))-f(a(S\backslash\D_j)) \,:\,
			S \subseteq \D(j), ~|S\cap N_k|\leq 1, ~\forall~ k\in [t], ~\D_j \in S\right\},\\
			\geq & \min_S \left\{  f(a(S))-f(a(S\backslash\D_j)) \,:\,
			S \subseteq \D(j), ~\D_j \in S
			\right\},\\
			=&f(a(\D(j)))-f(a(\D(j-1))),
		\end{aligned}
	\end{equation*}
	where the last equality holds due to $a_{\D_j} \geq 0$, $a(\D(j)) \geq a(S)$ for all $S$ with $S \subseteq \D(j)$ and $\D_j \in S$ (since $a\in\R^n_+$),
	and the concavity of $f$. 
\end{proof}
We provide an example to illustrate \cref{thm:LEPI-EPI}.
\begin{Example}[\cref{Example1} Continued]\label{Example2}
	Consider $X_0$ in \eqref{example1X_0} of \cref{Example1}.
	We list \EPIs and \LEPIs corresponding to all the permutations of $[3]$:
	\begin{align*}
		&(1,2,3):\quad w+1x_1+8x_2+27x_3\geq 0,\quad w+1x_1+\boldsymbol{4}x_2+\boldsymbol{21}x_3\geq 0.\\
		&(1,3,2):\quad w+1x_1+20x_2+15x_3\geq 0,\quad w+1x_1+\boldsymbol{10}x_2+15x_3\geq 0.\\
		&(2,1,3):\quad w+5x_1+4x_2+27x_3\geq 0,\quad w+\boldsymbol{1}x_1+4x_2+\boldsymbol{21}x_3\geq 0.\\
		&(2,3,1):\quad w+11x_1+4x_2+21x_3\geq 0,\quad
		w+\boldsymbol{1}x_1+4x_2+21x_3\geq 0.\\
		&(3,1,2):\quad w+7x_1+20x_2+9x_3\geq 0,\quad w+7x_1+\boldsymbol{16}x_2+9x_3\geq 0.\\
		&(3,2,1):\quad w+11x_1+16x_2+9x_3\geq 0,\quad w+\boldsymbol{7}x_1+16x_2+9x_3\geq 0.
	\end{align*}
	Clearly, for any permutation $\D$ of $[3]$, \eqref{LEPI} defined by the same $\D$ dominates \eqref{EPI} defined by $\D$. This observation is consistent with \cref{thm:LEPI-EPI}.
\end{Example}

\cref{Example2} reveals two important properties of \LEPIs. 
First, different permutations of $[n]$ may define the same \eqref{LEPI}.
In the next section, we will identify \emph{partial ascending permutations} that are enough to characterize all \LEPIs.
Second, all \LEPIs, together with \GUB and bound constraints, provide a complete linear description of $\conv(X_0)$; see this description in \cref{Example1}.
In \cref{section5}, we will show that 
such property of \LEPIs is indeed not random, but hold in general.

\section{A more compact characterization of \LEPI{s}}\label{section4}

In this section, we will identify a subclass of permutations, called \emph{partial ascending permutations}, with which all \LEPIs can be characterized. 
We show that \eqref{LEPI} defined by any partial ascending permutation can be computed in $\mathcal{O}(n)$ time. 
\subsection{Characterizing \eqref{LEPI} by the set collection $\{W_j^\D\}_{j\in [n]}$}
Let $\D = (\D_1, \D_2, \ldots, \D_n)$ be a permutation of $[n]$ that defines \eqref{LEPI}.
Obviously, the point $(w,x)=(0, \boldsymbol{0})\in X_0$ satisfies \eqref{LEPI} at equality, and 
from \cref{thm:lifting}, the $n$ points $\{(f(a(W^\D_j)), \chi^{W^\D_j})\}_{j\in [n]}\in X_0$ also satisfy \eqref{LEPI} at equality, where we recall that $W_j^\D = U^\D_{h^\D_j} \backslash \D_{h^\D_j} \cup \D_j$ and $\chi^{W^\D_j} = \sum_{i \in W^\D_j} \boldsymbol{e}_i$ for $j\in [n]$.
Moreover, these $n+1$ points are affinely independent (as $W^\D_j$ is the only set in $\{W^\D_r\}_{r\in [j]}$ that contains element $\D_j$ for any $j\in [n]$), thereby determining the facet-defining inequality \eqref{LEPI}.
Since the points $\{(f(a(W^\D_j)), \chi^{W^\D_j})\}_{j\in [n]}$ are defined by the set collection  $\{W_j^\D\}_{j\in [n]}$, 
it follows that
\begin{Remark}\label{prop:set-collection}
	\eqref{LEPI} can be determined by the set collection $\{W_j^\D\}_{j\in [n]}$.
\end{Remark}
 Different permutations may lead to the same set collection $\{W_j^\D\}_{j\in [n]}$ and thus the same \eqref{LEPI}.
\begin{Example}[\cref{Example1} Continued]\label{Example3}
   	The set collections $\{W^\D_j\}_{j\in [3]}$ defined in \eqref{def:W} for all permutations $\D$ of $[3]$ are listed in \cref{table1}. 
   	As shown in \cref{table1}, 
   	permutations $(1,2,3)$, $(2,1,3)$, and $(2,3,1)$ define the same set collection  $\{\{1\},\{2\},\{2,3\}\}$, and thus  the corresponding \LEPIs are identical; see also \cref{Example2}.
   	Similarly, the \LEPIs defined by
   	permutations $(3,1,2)$ and $(3,2,1)$ are also identical.
    \begin{table}[htbp]
    	\centering
    	\fontsize{13}{15}\selectfont
    	\caption{The set collections $\{W^\D_j\}_{j\in [3]}$ for all permutations $\D$ of $[3]$ in \cref{Example1}.}
    	\label{table1}
    	\begin{tabular}{c c c c c c c c c c c}
    		\toprule
    		$\D$ & $h^\D_1$ & $U^\D_{h^\D_1} $ & $W^\D_1$
    		& $h^\D_2$ & $U^\D_{h^\D_2} $ & $W^\D_2$
    		& $h^\D_3$ & $U^\D_{h^\D_3} $ & $W^\D_3$ & $\{W^\D_j\}_{j\in[3]}$\\
    		\midrule
    		$(1, 2, 3) $ & 1 & $\{1\}$ & $\{1\}$
    		& 2 & $\{2\}$ & $\{2\}$
    		& 3 & $\{2, 3\}$ & $\{2, 3\}$ & $\{\{1\},\{2\},\{2,3\}\}$\\
    		$(2, 1, 3) $ & 1 & $\{2\}$ & $\{2\}$
    		& 1 & $\{2\}$ & $\{1\}$
    		& 3 & $\{2, 3\}$ & $\{2, 3\}$ & $\{\{1\},\{2\},\{2,3\}\}$\\
    		$(2,3,1)$ & 1 & $\{2\}$ & $\{2\}$
    		& 2 & $\{2,3\}$ & $\{2,3\}$
    		& 1 & $\{2\}$ & $\{1\}$ & $\{\{1\},\{2\},\{2,3\}\}$\\
    		$(3,1,2)$ & 1 & $\{3\}$ & $\{3\}$
    		& 2 & $\{1,3\}$ & $\{1,3\}$
    		& 3 & $\{2,3\}$ & $\{2,3\}$ & $\{\{3\},\{1,3\},\{2,3\}\}$\\
    		$(3,2,1)$ & 1 & $\{3\}$ & $\{3\}$
    		& 2 & $\{2,3\}$ & $\{2,3\}$
    		& 2 & $\{2,3\}$ & $\{1,3\}$ & $\{\{3\},\{1,3\},\{2,3\}\}$\\
    		$(1,3,2)$ & 1 & $\{1\}$ & $\{1\}$
    		& 2 & $\{1,3\}$ & $\{1,3\}$
    		& 3 & $\{2,3\}$ & $\{2,3\}$ & $\{\{1\},\{1,3\},\{2,3\}\}$\\
    		\bottomrule
    	\end{tabular}
    \end{table}
\end{Example}

\subsection{Partial ascending permutations}
To avoid the unnecessary permutations that define the same set collection $\{W_j^\D\}_{j\in [n]}$ (and thus the same \eqref{LEPI}),  we introduce the \emph{partial ascending permutations}, which maintain the ascending order of all items within each \GUB set. 
\begin{Definition}\label{Def:validperm}
	A permutation $\D$ of $[n]$ is called partial ascending if for all $i,j\in [n]$ and $k\in [t]$ with $i < j $ and $\D_i,\D_j\in N_k$, it follows $\D_i<\D_j$. 
\end{Definition}

In the remainder of this paper, let $\Delta$ denote the set of all partial ascending permutations of $[n]$.

\begin{Lemma}\label{prop:W-equals-U}
	The following four statements are equivalent: (i) $\D\in \Delta$; 
	(ii) $\D_j=\max_{i\in N_k\cap \D(j)}i$, or equivalently, $U_j^\D \cap N_k = \{\D_j\}$, holds for all $j \in [n]$, 
	where $k \in [t]$ satisfying $\D_j \in N_k$ and $U^{\D}_j$ is defined in \eqref{def:U}; 
	(iii) $j=h_j^\D$ holds for all $j \in [n]$ where  $h^\D_j$ is defined in \eqref{problem:defh};
	and (iv) $W^{\D}_j = U^{\D}_j$ holds for all $ j\in [n]$, where $W^{\D}_j$ is  defined in \eqref{def:W}. 
\end{Lemma}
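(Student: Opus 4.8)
The plan is to establish the four equivalences via the short chain $\text{(i)}\Leftrightarrow\text{(ii)}\Leftrightarrow\text{(iii)}$ together with $\text{(iii)}\Rightarrow\text{(iv)}\Rightarrow\text{(ii)}$. The one fact I would isolate at the outset is the following elementary observation about $U_j^\D$ from \eqref{def:U}: for every $j\in[n]$, if $k\in[t]$ is the index with $\D_j\in N_k$, then $U_j^\D\cap N_k=\{\max_{i\in N_k\cap\D(j)}i\}$. This holds because $\{N_k\}_{k\in[t]}$ is a partition of $[n]$, so for $k'\ne k$ the term $\{\max_{i\in N_{k'}\cap\D(j)}i\}$ lies in $N_{k'}$ and contributes nothing to the intersection with $N_k$, while the $k'=k$ term is a singleton since $\D_j\in N_k\cap\D(j)\ne\varnothing$. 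In particular, the two phrasings of statement (ii) are literally the same condition.

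For $\text{(i)}\Rightarrow\text{(ii)}$, fix $j$ with $\D_j\in N_k$; any $i\in N_k\cap\D(j)$ with $i\ne\D_j$ is of the form $\D_\ell$ with $\ell<j$, and \cref{Def:validperm} then gives $\D_\ell<\D_j$, so $\D_j=\max_{i\in N_k\cap\D(j)}i$. For $\text{(ii)}\Rightarrow\text{(i)}$, given $i<j$ with $\D_i,\D_j\in N_k$, we have $\D_i\in N_k\cap\D(j)$ and $\D_i\ne\D_j$, so (ii) forces $\D_i<\max_{i'\in N_k\cap\D(j)}i'=\D_j$. The equivalence $\text{(ii)}\Leftrightarrow\text{(iii)}$ is then immediate from \eqref{problem:defh}: its first branch, which sets $h_j^\D=j$, is taken exactly when $\D_j=\max_{i\in N_k\cap\D(j)}i$, and in the second branch $h_j^\D\le j-1<j$; hence $h_j^\D=j$ for all $j$ iff (ii) holds.

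It remains to route through (iv). For $\text{(iii)}\Rightarrow\text{(iv)}$: substituting $h_j^\D=j$ into \eqref{def:W} gives $W_j^\D=U_j^\D\backslash\D_j\cup\D_j$, and since $\D_j\in U_j^\D$ (by \cref{lem:propertyh} with $h_j^\D=j$, or directly from (ii)), this equals $U_j^\D$. For $\text{(iv)}\Rightarrow\text{(ii)}$: from $W_j^\D=U_j^\D$ and $\D_j\in W_j^\D$ we get $\D_j\in U_j^\D\cap N_k$, which by the opening observation is the singleton $\{\max_{i\in N_k\cap\D(j)}i\}$, whence $\D_j=\max_{i\in N_k\cap\D(j)}i$, that is, (ii). This closes the loop and yields all four equivalences.

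I do not expect a genuine obstacle: the whole argument is bookkeeping with the definitions \eqref{def:U}, \eqref{problem:defh}, and \eqref{def:W}. The only step deserving care is the opening observation that $U_j^\D\cap N_k$ is always exactly $\{\max_{i\in N_k\cap\D(j)}i\}$, since it relies on both the partition structure of $\{N_k\}_{k\in[t]}$ and on $N_k\cap\D(j)\ne\varnothing$, and it is the common ingredient in $\text{(i)}\Leftrightarrow\text{(ii)}$ and $\text{(iv)}\Rightarrow\text{(ii)}$.
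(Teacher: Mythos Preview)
Your proposal is correct and follows essentially the same route as the paper: $(\text{i})\Leftrightarrow(\text{ii})$ from \cref{Def:validperm}, $(\text{ii})\Leftrightarrow(\text{iii})$ from the case split in \eqref{problem:defh}, and then linking (iv) via \eqref{def:W}; the only cosmetic difference is that you close the loop with $(\text{iv})\Rightarrow(\text{ii})$ whereas the paper asserts $(\text{iii})\Leftrightarrow(\text{iv})$ directly. Your added detail---in particular the opening observation that $U_j^\D\cap N_k=\{\max_{i\in N_k\cap\D(j)}i\}$ is always a singleton---makes explicit what the paper's terse proof leaves to the reader.
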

\begin{proof}
	The equivalence of (i) and (ii) follows from the definition of partial ascending permutations and the fact that $\D(j) = \{\D_1, \ldots, \D_j\}$.
	The equivalence of (ii) and (iii) follows from  the definition of $h^\D_j$ in \eqref{problem:defh}.
	Finally, $W^\D_j = U^\D_{ h^\D_j}\backslash \D_{h^\D_j}\cup \D_j= U_j^\D$ holds if and only if $j=h_j^\D$, which implies the equivalence of (iii) and (iv). 
\end{proof}

Given any permutation $\D$, we can use an iterative procedure to construct a permutation $\D'\in \Delta$.
Specifically, in the $j$-th iteration, we check whether  $\D_j=\max_{i\in N_k\cap \D(j)}i$ holds (where $k\in [t]$ satisfying $\D_j\in N_k$); see \cref{prop:W-equals-U}(ii). 
If it does not hold, we move $\D_j$ to the position right before $\D_{h_j^\D}$ (where we recall that $\D_{h_j^\D}$ is the first item in $\D$ that is larger than $\D_j$ and belongs to $N_k$). 
The procedure is repeated until $j=n$. 
\cref{alg:permutationfinder} summarizes the overall procedure. 
Note that after the $j$-th iteration of \cref{alg:permutationfinder}, $\D^{j}_\tau = \max_{i\in N_k\cap \D^{j}(\tau)} i$ must hold for $\tau=1, \ldots, j$, and therefore, it follows from \cref{prop:W-equals-U}(ii) that  \cref{alg:permutationfinder} will output a permutation $\D'\in \Delta$.

\begin{algorithm}[htbp]
	\SetAlgoLined
	\textbf{Input} A permutation $\D$ of $[n]$\;
	$\D^0 \leftarrow \D$\; 
	\For{$j=1,2,\dots,n$}{
		Let $k \in [t]$ be such that $\D^{j-1}_j\in N_k$\;
		\eIf{$\D^{j-1}_j = \max_{i\in N_k\cap \D^{j-1}(j)} i$}{$\D^j\leftarrow \D^{j-1}$\;}{
			$j' \leftarrow h^{\D^{j-1}}_j$;	~~\tcp{$h^{\D^{j-1}}_j$ is the index of the first item in $\D^{j-1}$ that is larger than $\D^{j-1}_j$ and belongs to $N_k$.}
			$\D^j \leftarrow (\D^{j-1}_1,\dots,\D^{j-1}_{j'-1},\D^{j-1}_j,\D^{j-1}_{j'},\dots,\D^{j-1}_{j-1},\D^{j-1}_{j+1},\dots,\D^{j-1}_n)$; ~~\tcp{move $\D^{j-1}_j$ to the position right before $\D^{j-1}_{j'}$.}
		}
	}
	$\D' \leftarrow \D^n$\;
	\textbf{Output} A partial ascending permutation $\D'\in \Delta$.
	\caption{{An iterative procedure to construct a partial ascending permutation $\D'$ from a given permutation $\D$ of $[n]$}}
	\label{alg:permutationfinder}
\end{algorithm}

\cref{alg:permutationfinder} constructs $n$ permutations: $\D^1, \ldots, \D^n$. 
We now summarize some properties of these permutations as follows.
\begin{Remark}\label{ob:condition}
	Let $\D^1, \ldots, \D^n$ be the permutations constructed in \cref{alg:permutationfinder}.
	Then the following statements hold.
	\begin{itemize}
		\item [\rm{(i)}] 
		For any $j \in [n]$, it follows that $\D^j(j) = \D(j)$.
		\item [\rm{(ii)}] Letting  $\tau_1,  \tau_2, j \in [n] $ be such that $\tau_1 \leq \tau_2 \leq j$, then for any $r \geq j$	, $\D_{\tau_1}^j$ does not come after $\D_{\tau_2}^j $ in permutation $\D^r$.
		\item [\rm{(iii)}] For any $k \in [t]$ and $j \in [n]$, let $\beta = \max_{i \in N_k \cap \D^j(j)}i $. 
		Then for $r \in  [n]$ with $r >j$, it follows $\beta = \max_{i \in N_k \cap \D^r(j')}i  $ where $j'$ is the position of $\D_{j}^j$ in $\D^r$ (that is, $\D^r_{j'}=\D_j^j$).
	\end{itemize}
\end{Remark}

Using the above results, we can give a sufficient condition under which $\D_j$ precedes $\D_{\ell}$ in permutation $\D'$.
\begin{Lemma}\label{lem:precede}
	Let $\ell,j\in [n]$ be such that $\D_j$ precedes $\D_\ell$ in permutation $\D$ (i.e., $j < \ell$), and $k\in [t]$ be such that $\D_{\ell}\in N_k$.
	If $\D_{\ell}> \max_{i\in N_k\cap \D(j)}i$, then $\D_j$ precedes $\D_{\ell}$ in permutation $\D'$.
\end{Lemma}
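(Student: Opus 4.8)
The plan is to track the relative position of $\D_j$ and $\D_\ell$ through the sequence of permutations $\D^0 = \D, \D^1, \ldots, \D^n = \D'$ produced by \cref{alg:permutationfinder}, and to argue that $\D_j$ never gets moved past $\D_\ell$. Since moves in \cref{alg:permutationfinder} only ever shift an element \emph{leftward} (an element $\D^{r-1}_r$ is relocated to sit right before the first later element of its own \GUB set that exceeds it), the only way $\D_j$ could end up after $\D_\ell$ in $\D'$ is if, in some iteration $r$, the element currently in position $r$ is $\D_\ell$ (or rather, the current image of $\D_\ell$) and it jumps leftward over the current image of $\D_j$. So it suffices to show this jump cannot happen: whenever $\D_\ell$ sits in position $r$ about to be moved, its target slot $h^{\D^{r-1}}_r$ is still strictly to the right of $\D_j$.

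First I would use \cref{ob:condition}(i) together with the hypothesis $j < \ell$ to observe that $\D_j$ and $\D_\ell$ both lie in $\D^{r-1}(\ell)$ for the relevant iterations, and that their relative order is preserved up through iteration $j$ by \cref{ob:condition}(ii) (with $\tau_1 \leftrightarrow$ position of $\D_j$, $\tau_2 \leftrightarrow$ position of $\D_\ell$): for all $r \geq j$, $\D_j$ does not come after $\D_\ell$ in $\D^r$. The content of the lemma is then that this non-inversion, which \cref{ob:condition}(ii) guarantees only as long as neither element has been processed, is in fact maintained even when $\D_\ell$ is finally processed. Let $r$ be the iteration at which the current image of $\D_\ell$ occupies position $r$ and gets relocated; necessarily $r \geq \ell > j$, so by \cref{ob:condition}(i)–(ii) $\D_j$ already sits somewhere in positions $1, \ldots, r-1$ of $\D^{r-1}$ and has been ``finalized'' relative to this block. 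Let $k$ be the \GUB index with $\D_\ell \in N_k$; the element is moved to position $h^{\D^{r-1}}_r$, which is the index of the first element of $N_k$ in $\D^{r-1}$ that is larger than $\D_\ell$, among positions $< r$.

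The key step — and the main obstacle — is to show $\D_j$ does \emph{not} lie weakly to the right of that target position, i.e.\ that $\D_j$ is strictly before position $h^{\D^{r-1}}_r$ in $\D^{r-1}$. I would argue by contradiction: if $\D_j$ were at or after position $h^{\D^{r-1}}_r$, then (since $\D_j$ precedes $\D_\ell$, which is at position $r$) $\D_j$ would lie strictly between the target slot and $\D_\ell$. Invoking \cref{ob:condition}(iii) with the block $\D^j(j)$ containing $\D_j$, and the fact that positions have only been permuted leftward, the maximum of $N_k$ among the first $r-1$ entries of $\D^{r-1}$ equals the maximum of $N_k$ among the entries of $\D$ preceding $\D_\ell$, which under the hypothesis $\D_\ell > \max_{i \in N_k \cap \D(j)} i$ forces any $N_k$-element in position $h^{\D^{r-1}}_r$ to have index exceeding $\max_{i \in N_k \cap \D(j)} i \geq$ (the index of $\D_j$ if $\D_j \in N_k$). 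This pins down where the ``first $N_k$-element larger than $\D_\ell$'' can be: it must itself come from elements that were already after $\D_j$ in the original $\D$, because the hypothesis says everything of $N_k$ weakly before $\D_j$ in $\D$ is smaller than $\D_\ell$. Hence $h^{\D^{r-1}}_r$ corresponds to an element that was to the right of $\D_j$ in $\D^j$, and by \cref{ob:condition}(ii) it is still to the right of $\D_j$ in $\D^{r-1}$; thus the target slot is after $\D_j$, so the leftward move of $\D_\ell$ stops before reaching $\D_j$. This contradicts the assumption, completing the induction and showing $\D_j$ precedes $\D_\ell$ in $\D' = \D^n$.

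I expect the delicate bookkeeping to be reconciling the position indices across the relabelings in \cref{alg:permutationfinder} — in particular, translating ``first $N_k$-element larger than $\D_\ell$ in $\D^{r-1}$'' back to a statement about the original $\D$ via repeated application of \cref{ob:condition}(iii), and making sure the inequality $\D_\ell > \max_{i \in N_k \cap \D(j)} i$ is applied to exactly the right prefix. Everything else is a direct consequence of the monotone (leftward-only) nature of the moves together with \cref{ob:condition}.
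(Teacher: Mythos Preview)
Your overall strategy---tracking the relative order of $\D_j$ and $\D_\ell$ through the iterations of \cref{alg:permutationfinder} and arguing that the only dangerous iteration is the one in which $\D_\ell$ is processed---is sound and can be made to work. The gap is in the key step. You correctly observe that the target element $\alpha := \D^{\ell-1}_{h^{\D^{\ell-1}}_\ell}$ satisfies $\alpha\in N_k$ and $\alpha>\D_\ell>\max_{i\in N_k\cap\D(j)}i$, hence $\alpha=\D_m$ for some $m>j$. But then you write that ``by \cref{ob:condition}(ii) it is still to the right of $\D_j$ in $\D^{r-1}$.'' This invocation fails: \cref{ob:condition}(ii) requires $\tau_1\le\tau_2\le j$, i.e.\ both elements must lie in the first $j$ positions of $\D^j$, whereas $\alpha$ sits at position $m>j$ in $\D^j$. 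So \cref{ob:condition}(ii) gives you nothing here, and the claim that $\D_j$ precedes $\alpha$ in $\D^{\ell-1}$ is exactly the same type of statement you are trying to prove (with $\alpha$ in place of $\D_\ell$ and $m<\ell$ in place of $\ell$). Your argument is circular unless you set up an explicit strong induction on $\ell$---which does work, since the hypothesis $\alpha>\max_{i\in N_k\cap\D(j)}i$ is inherited---but you never state or structure that induction.

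The paper sidesteps this recursion entirely with a pivot trick: instead of chasing $\alpha$, it compares both $\D_j$ and $\D_\ell$ to the fixed reference element $\D^j_j$. \cref{ob:condition}(ii) applies cleanly to show $\D_j$ never comes after $\D^j_j$ in any $\D^r$ with $r\ge j$ (here both positions are $\le j$, as required). Then \cref{ob:condition}(iii), applied once with $r=n$, gives $\max_{i\in N_k\cap\D(j)}i=\max_{i\in N_k\cap\D'(j')}i$ where $j'$ is the position of $\D^j_j$ in $\D'$; since $\D_\ell$ exceeds this maximum and $\D_\ell\in N_k$, it lies beyond position $j'$. You mention \cref{ob:condition}(iii) in passing but never actually use it; it is precisely the tool that replaces your missing induction.
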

\begin{proof}
	First, from \cref{ob:condition}(i), $\D_j \in \D(j)=\D^j(j)$ holds and thus $\D_j$ does not come after $\D^j_j$ in permutation $\D^j$.
	Then, using \cref{ob:condition}(ii), $\D_j$ must not come after $\D^j_j$ in permutations $\D^{j+1}, \D^{j+2}, \ldots, \D^n = \D'$.
	Finally, it follows from  \cref{ob:condition}(iii) and $\D_\ell> \max_{i \in N_k \cap \D(j)}i=\max_{i \in N_k\cap \D^j(j)}i$ that $\D_j^j$ precedes $\D_\ell$ in permutation $\D^n = \D'$. 
\end{proof}
The following theorem demonstrates that the set collection $\{W^\D_j\}_{j\in [n]}$ for permutation $\D$ is identical to that $\{W^{\D'}_j\}_{j\in [n]}$ for the partial ascending permutation $\D'$, where $\D' $ is computed by \cref{alg:permutationfinder}.
\begin{Theorem}\label{thm:D-D'}
	Let $\D$ be a permutation of $[n]$ and $\D'$ be the corresponding partial ascending permutation returned by \cref{alg:permutationfinder}. 
	Then $\{W^\D_j\}_{j\in [n]}=\{W^{\D'}_j\}_{j\in [n]}$.
\end{Theorem}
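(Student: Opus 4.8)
The plan is to establish the set-collection equality by tracking how \cref{alg:permutationfinder} modifies $\D$ one position at a time and showing each single move preserves the family $\{W^\D_j\}_{j\in[n]}$. The key observation is that the algorithm builds $\D'=\D^n$ through a sequence of intermediate permutations $\D^0=\D, \D^1, \ldots, \D^n$, where $\D^j$ is obtained from $\D^{j-1}$ either by doing nothing (when $\D^{j-1}_j = \max_{i\in N_k\cap \D^{j-1}(j)}i$) or by moving a single item $\D^{j-1}_j$ earlier to just before its predecessor $\D^{j-1}_{h^{\D^{j-1}}_j}$ within the same \GUB block. So it suffices to prove $\{W^{\D^{j-1}}_r\}_{r\in[n]} = \{W^{\D^j}_r\}_{r\in[n]}$ for each $j\in[n]$; chaining these equalities gives the theorem. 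I would state this as a lemma and carry out the whole argument on a single elementary move.

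For a single move, fix $j$, write $\sigma=\D^{j-1}$, $\sigma'=\D^j$, let $k$ be the block containing $\sigma_j$, and let $j'=h^\sigma_j < j$ be the target position (the non-trivial case). The permutations $\sigma$ and $\sigma'$ agree on positions $1,\ldots,j'-1$ and on positions $j+1,\ldots,n$; the item $\sigma_j$ moves into position $j'$, and the items formerly in positions $j',\ldots,j-1$ each shift right by one. The heart of the argument is to show that $U^\sigma_\cdot$ and $h^\sigma_\cdot$ change in a controlled way: for positions $r < j'$ and $r\geq j$ (reindexed appropriately), both $U$ and $h$ are unaffected, and for the ``shifted'' middle positions the sets $U^{\sigma}_\cdot$ differ only by whether $\sigma_j$ (the smallest $N_k$-item among $\sigma_{j'},\ldots,\sigma_j$, by the definition of $h^\sigma_j$ and assumption \eqref{asumption:a}) has been inserted yet. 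Crucially, because $\sigma_j$ is \emph{smaller} than every $N_k$-item sitting in positions $j',\ldots,j-1$, inserting it at position $j'$ never changes any $\max_{i\in N_{k}\cap\sigma(\cdot)}i$ value once we are at or past the original position of that larger $N_k$-item — so $U^\sigma_r = U^{\sigma'}_r$ (under the index shift) for $r\geq$ that larger item's position, while for the intervening positions $U^{\sigma'}$ picks up $\sigma_j$ as a new $N_k$-representative. One then checks directly from \eqref{def:W} that the collection $\{W^\sigma_r\}_r$ — which is just $\{U^\sigma_{h^\sigma_r}\setminus\D_{h^\sigma_r}\cup\D_r\}_r$ — is setwise unchanged: the set assigned to the element $\sigma_j$ itself is $W^\sigma_j = U^\sigma_{j'}\setminus\D_{j'}\cup\D_j$ under $\sigma$, whereas under $\sigma'$ the item $\sigma_j$ now sits at position $j'$ with $h^{\sigma'}_{j'}=j'$, so $W^{\sigma'}_{j'}=U^{\sigma'}_{j'}=U^\sigma_{j'}\setminus\sigma_{j'}\cup\sigma_j$ (using that $\sigma_j$ replaces $\sigma_{j'}$'s role... ); one verifies these two sets coincide, and that all other members of the two collections match up under the obvious correspondence of the shifted positions.

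Rather than the raw index-bookkeeping above, a cleaner route I would actually take is to invoke \cref{lem:precede}: that lemma gives a sufficient condition ($\D_\ell > \max_{i\in N_k\cap\D(j)}i$ with $j<\ell$) for the relative order of two items to be \emph{preserved} from $\D$ to $\D'$, and combined with \cref{ob:condition}(i)–(iii) it pins down enough of the structure of $\D'$ that one can compare $W^\D_j$ and $W^{\D'}_{j'}$ (where $j'$ is the final position of $\D_j$) directly. Concretely, for each $j\in[n]$ I would identify the element $\D_j$, locate its final position $j'$ in $\D'$, and argue $W^\D_j = W^{\D'}_{j'}$ by showing both equal $\{\,\text{the }N_k\text{-max of the appropriate prefix}\,\}$-type set: the right-hand side is $U^{\D'}_{j'}$ since $\D'\in\Delta$ makes $h^{\D'}_{j'}=j'$ (by \cref{prop:W-equals-U}(iii)), while the left-hand side is $U^\D_{h^\D_j}\setminus\D_{h^\D_j}\cup\D_j$, and one shows these two sets have the same elements by examining, block by block, the maximum $N_{k''}$-index appearing before the relevant cutoff, using that \cref{alg:permutationfinder} only ever reorders items \emph{within} a block and only moves an item \emph{earlier}. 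Since $j\mapsto$ (final position of $\D_j$) is a bijection of $[n]$, this elementwise matching yields the claimed equality of collections.

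\textbf{Main obstacle.} The delicate point is controlling how $U^\D_{h^\D_j}$ (a union of per-block maxima over a \emph{prefix}) transforms under the reindexing induced by the algorithm's moves: a move that reorders block $k$ changes which prefix each other block's maximum is taken over, so one must argue that although the prefixes shift, the per-block maxima they capture do not — precisely because every move is an \emph{earlier} move of a \emph{within-block-minimal-so-far} element. I expect the bulk of the work to be packaging this invariant cleanly (most likely as the single-elementary-move lemma sketched above, proved by a short case analysis on whether a position lies before $j'$, strictly between $j'$ and $j$, or at/after $j$), after which the chaining over $j=1,\ldots,n$ and the final appeal to \cref{prop:W-equals-U}(iv) ($W^{\D'}_j=U^{\D'}_j$ for the partial ascending $\D'$) are routine.
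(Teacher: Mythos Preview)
Your ``cleaner route''---matching each $\D_j$ to its final position $j'$ in $\D'$, reducing $W^{\D'}_{j'}$ to $U^{\D'}_{j'}$ via \cref{prop:W-equals-U}(iv), and then verifying $W^\D_j\cap N_k=U^{\D'}_{j'}\cap N_k$ block by block with \cref{lem:precede}---is exactly the paper's proof. The paper carries this out by proving, for each $k\neq k'$, the identity $\max_{i\in N_k\cap\D(h^\D_j)}i=\max_{i\in N_k\cap\D'(j')}i$ via two separate applications of \cref{lem:precede} (the $\geq$ direction routes through the auxiliary position $\tau$ with $\D'_\tau=\D_{h^\D_j}$; the $\leq$ direction uses that every $N_{k'}$-item before position $h^\D_j$ in $\D$ is smaller than $\D_j$).

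Your first route---chaining the elementary moves $\D^{j-1}\to\D^j$ and showing each preserves the collection $\{W_r\}$---is a valid alternative the paper does not take; it trades two clean invocations of \cref{lem:precede} for explicit position bookkeeping on the shifted range $[j',j]$, and your ``Main obstacle'' paragraph correctly locates where that bookkeeping bites. One caution: the phrase ``only ever reorders items \emph{within} a block'' is loose, since a single move does change the relative positions of $\sigma_j$ and items from \emph{other} blocks; this is exactly why the paper's direct argument needs both inequalities rather than a one-line invariance claim.
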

\begin{proof}
	From \cref{prop:W-equals-U}(iv) and the fact that $\D'\in \Delta$, it suffices to show 
	$\{W^\D_j\}_{j\in [n]}=\{U^{\D'}_j\}_{j\in [n]}$.
	Observe that for any $j\in [n]$,  
	there exists a unique $j'\in [n]$ such that $\D_j=\D'_{j'}$.
	In the following, we shall prove the statement by showing that
	$W^\D_j=U^{\D'}_{j'}$, or equivalently, 
	\begin{equation}\label{tmpeq}
		W^\D_j\cap N_k=U^{\D'}_{j'}\cap N_k, ~\forall~k\in [t].
	\end{equation}
	
	Let $k'\in [t]$ be such that $\D_j\in N_{k'}$.
	From the definition of $W_j^\D$ in \eqref{def:W}, we have $W_j^\D \cap N_{k'} = \{\D_j\}= \{\D'_{j'}\}$, which, together with \cref{prop:W-equals-U}(ii) and $\D'\in \Delta$, implies that $U^{\D'}_{j'}\cap N_{k'}=\{\D'_{j'}\}=W_j^\D \cap N_{k'} $.
	Therefore, \eqref{tmpeq} holds for $k= k'$. 
	Next, we consider the case $k\in [t]\backslash k'$.
	In this case, $W^\D_j \cap N_k = (U^\D_{h^\D_j} \backslash \D_{h^\D_j} \cup \D_j) \cap N_k =
	 U^\D_{h^\D_j}\cap N_k$, and thus it suffices to show that $U^\D_{h^\D_j}\cap N_k =U^{\D'}_{j'}\cap N_k$, or equivalently,
	\begin{equation}\label{eq:1}
		\max_{i\in N_k\cap \D(h^\D_j)}i=\max_{i\in N_k\cap \D'(j')}i.
	\end{equation}
	
	To prove \eqref{eq:1}, we first show that 
	\begin{equation}\label{eq:geq}
		\max_{i\in N_k\cap \D(h^\D_j)}i \geq \max_{i\in N_k\cap \D'(j')}i.
	\end{equation}
	Let $\tau\in [n]$ be such that $\D_{h^\D_j}=\D'_{\tau}$.
	Then $\D'_{\tau} \in N_{k'}$, which, together with $\D'_{j'} \in N_{k'}$,
	$\D'_{\tau}=\D_{h^\D_j}\geq \D_j=\D'_{j'}$, and $\D'\in \Delta$, implies that $\tau\geq j'$.
	As a result, 
	\begin{equation}\label{eq:geq1}
		\max_{i\in N_k\cap \D'(\tau)}i \geq \max_{i\in N_k\cap \D'(j')}i.
	\end{equation}	
	Let $\ell\in [n]$ be such that $\D_{\ell}\in N_k\cap \D'(\tau)$.
	If $\D_{\ell}>\max_{i\in N_k\cap \D(h^\D_j)}i$, then $\ell>h^\D_j$,
	and by \cref{lem:precede}, $\D_{h^\D_j}$ ($=\D'_{\tau}$) precedes $\D_{\ell}$ in permutation $\D'$. 
	Thus, $\D_{\ell}\notin \D'(\tau)$, a contradiction with $\D_{\ell}\in N_k\cap \D'(\tau)$.
	As a result, $\D_{\ell}\leq \max_{i\in N_k\cap \D(h^\D_j)}i$.
	Since $\ell$ is chosen arbitrarily from the set of indices such that $\ell\in [n]$ and $\D_{\ell}\in N_k\cap \D'(\tau)$,
	it follows that
	\begin{equation}\label{eq:geq2}
		\max_{i\in N_k\cap \D(h^\D_j)}i \geq \max_{i\in N_k\cap \D'(\tau)}i.
	\end{equation}
	Combining \eqref{eq:geq1} and \eqref{eq:geq2} yields \eqref{eq:geq}.
	
	Next, we show that 
	\begin{equation}\label{eq:leq}
		\max_{i\in N_k\cap \D(h^\D_j)}i \leq \max_{i\in N_k\cap \D'(j')}i.
	\end{equation}
	If $N_k\cap \D(h^\D_j)=\varnothing$, then by $\max_{i\in \varnothing} i = 0$, \eqref{eq:leq} holds.
	Otherwise, let $\D_s= \max_{i\in N_k\cap \D(h^\D_j)}i$.
	By definition,  it follows that $s \leq h^\D_j$ (as $\D_s \in \D(h^\D_j)$) and $\D_s \in N_k$.
	Together with $\D_{h^\D_j} \in N_{k'}$ and $k\neq k'$, we obtain $s \neq h^\D_j$, and hence $s \leq h^\D_j - 1$.
	As a result, $\max_{i\in N_{k'}\cap \D(s)}i\leq \max_{i\in N_{k'}\cap \D(h^\D_j-1)}i< \D_j$, where the last inequality follows from the definition of $h^\D_j$ in \eqref{problem:defh}.
	This, together with $s \leq h^\D_j - 1<h^\D_j \leq j$,  $\D_j\in N_{k'}$, and \cref{lem:precede}, implies that $\D_s$ precedes $\D_j (= \D'_{j'})$ in $\D'$ (i.e., $\D_s\in \D'(j')$).
	Therefore, $\D_s\in N_k\cap \D'(j')$, and hence
	$\max_{i\in N_k\cap \D'(j')}i\geq \D_{s}= \max_{i\in N_k\cap \D(h^\D_j)}i$. 
\end{proof}

From \cref{thm:D-D'}, for any permutation $\D$ of $[n]$ that is not partial ascending,
the corresponding  set collection $\{W^\D_j\}_{j \in [n]}$ is identical to that corresponds to a partial ascending permutation returned by \cref{alg:permutationfinder}.
This, together with the fact that \eqref{LEPI} can be determined by a set collection, immediately implies the following result.
\begin{Corollary}\label{coro:validD}
	Given a permutation $\D$ of $[n]$, there exists a permutation $\D'\in \Delta$ such that the two \LEPIs defined by $\D$ and $\D'$ are identical.
\end{Corollary}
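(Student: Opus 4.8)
The plan is to derive \cref{coro:validD} as an almost immediate consequence of the machinery already assembled in \cref{section4}. The key chain of implications is: \cref{alg:permutationfinder} takes an arbitrary permutation $\D$ of $[n]$ and produces a permutation $\D'$; \cref{prop:W-equals-U} (together with the remark following \cref{alg:permutationfinder}) guarantees that $\D' \in \Delta$; \cref{thm:D-D'} guarantees that $\{W^\D_j\}_{j\in [n]} = \{W^{\D'}_j\}_{j\in [n]}$; and \cref{prop:set-collection} guarantees that \eqref{LEPI} is determined by the set collection $\{W^\D_j\}_{j\in [n]}$. Stringing these four facts together yields the claim: the \LEPI defined by $\D$ equals the \LEPI defined by $\D'$, and $\D' \in \Delta$.

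Concretely, the proof I would write is as follows. First I would invoke \cref{alg:permutationfinder} on the given permutation $\D$ to obtain the output permutation $\D'$, and recall from the discussion immediately preceding \cref{ob:condition} that after the $j$-th iteration one has $\D^j_\tau = \max_{i \in N_k \cap \D^j(\tau)} i$ for all $\tau \in [j]$ (with $k$ the index with $\D^j_\tau \in N_k$), so that by \cref{prop:W-equals-U}(ii) the final permutation $\D' = \D^n$ lies in $\Delta$. Next I would apply \cref{thm:D-D'} to conclude $\{W^\D_j\}_{j\in [n]} = \{W^{\D'}_j\}_{j\in [n]}$. Finally I would appeal to \cref{prop:set-collection}, which states that \eqref{LEPI} is determined by its set collection $\{W^\D_j\}_{j\in[n]}$: since the two permutations $\D$ and $\D'$ have the same set collection, the affinely independent point sets $\{(0,\boldsymbol{0})\} \cup \{(f(a(W^\D_j)), \chi^{W^\D_j})\}_{j\in[n]}$ and $\{(0,\boldsymbol{0})\} \cup \{(f(a(W^{\D'}_j)), \chi^{W^{\D'}_j})\}_{j\in[n]}$ coincide, hence determine the same facet-defining hyperplane, i.e.\ the same \LEPI.

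I do not expect any genuine obstacle here: all the substantive work — constructing $\D'$, proving $\D' \in \Delta$, and proving the set-collection identity $\{W^\D_j\}_{j} = \{W^{\D'}_j\}_{j}$ — has already been done in \cref{thm:D-D'} and the lemmas supporting it. The only point requiring minor care is making the logical link from "same set collection" to "same inequality" fully explicit; this is exactly the content of \cref{prop:set-collection}, whose justification (the $n+1$ points are affinely independent and lie on the hyperplane defining \eqref{LEPI}) was spelled out just before its statement. So the corollary is really a one-line consequence, and the proof should just cite \cref{thm:D-D'} and \cref{prop:set-collection} in sequence. If one wanted to be maximally self-contained, one could restate the affine-independence argument, but given that it is already recorded, a pointer suffices.
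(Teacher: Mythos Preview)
Your proposal is correct and matches the paper's own approach essentially verbatim: the paper derives \cref{coro:validD} as an immediate consequence of \cref{thm:D-D'} (equal set collections) together with \cref{prop:set-collection} (the set collection determines \eqref{LEPI}), and that is exactly the chain you outline.
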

\begin{Example}[\cref{Example2} Continued]\label{Example4}
	Consider the permutations of $[3]$ in \cref{Example2}.
	Using \cref{prop:W-equals-U}, we can verify that $(1,2,3)$, $(3,1,2)$, and $(1,3,2)$ are partial ascending permutations while $(2,1,3)$, $(2,3,1)$, and $(3,2,1)$ are not. 
	The three partial ascending permutations induce three different set collections (as shown in \cref{table1}) and three different \LEPIs (as shown in \cref{Example2}).
\end{Example}

The following theorem further shows that for any partial ascending permutation $\D \in \Delta$, the corresponding  set collection $\{W^\D_j\}_{j\in [n]}$ is unique.
\begin{Proposition}\label{lem:D1-D2}
	Given two distinct permutations $\D^1,\D^2\in \Delta$,
	their set collections satisfy $\{W^{\D^1}_j\}_{j\in [n]}\neq \{W^{\D^2}_j\}_{j\in [n]}$. 
\end{Proposition}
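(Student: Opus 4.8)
The plan is to show the contrapositive: if $\D^1,\D^2\in\Delta$ induce the same set collection $\{W^{\D^1}_j\}_{j\in[n]}=\{W^{\D^2}_j\}_{j\in[n]}$, then $\D^1=\D^2$. Since both permutations are partial ascending, \cref{prop:W-equals-U}(iv) gives $W^{\D^1}_j=U^{\D^1}_j$ and $W^{\D^2}_j=U^{\D^2}_j$ for all $j$, so the hypothesis becomes $\{U^{\D^1}_j\}_{j\in[n]}=\{U^{\D^2}_j\}_{j\in[n]}$ as set collections. The key structural observation I would exploit is that the sequence $U^\D_0,U^\D_1,\dots,U^\D_n$ is a nested-in-a-controlled-way chain: $U^\D_0=\varnothing$, and passing from $U^\D_{j-1}$ to $U^\D_j$ either leaves the set unchanged (when $\D_j$ is not the new maximum of its \GUB set $N_k$ intersected with $\D(j)$) or replaces the single element of $U^\D_{j-1}\cap N_k$ by $\D_j$ (adding $\D_j$ if $N_k\cap\D(j-1)=\varnothing$). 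For a partial ascending permutation, \cref{prop:W-equals-U}(ii) tells us the second case always occurs, i.e.\ $U^\D_j\cap N_{k}=\{\D_j\}$ where $\D_j\in N_k$; hence $\D_j$ is recoverable from $U^\D_j$ and $U^\D_{j-1}$ as the unique element of $U^\D_j\setminus U^\D_{j-1}$ if $|U^\D_j|=|U^\D_{j-1}|+1$, or the unique element of $U^\D_j\setminus U^\D_{j-1}$ (still a singleton) in general, since exactly one \GUB set changes its representative.

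Concretely, I would first prove a lemma: for $\D\in\Delta$ and each $j\in[n]$, the set $U^\D_j\setminus U^\D_{j-1}$ equals $\{\D_j\}$. This is immediate from the update rule just described together with \cref{prop:W-equals-U}(ii). Next I would show how to reconstruct the whole permutation from the \emph{unordered} collection $\mathcal{W}:=\{U^\D_j\}_{j\in[n]}$. The idea is to recover $U^\D_j$ in order of increasing $j$: $U^\D_0=\varnothing$; having determined $U^\D_{j-1}$, note that $U^\D_j$ is the member of $\mathcal{W}$ not yet used that differs from $U^\D_{j-1}$ in exactly one \GUB block, with the new representative strictly larger (when the block was already represented) — and crucially $\D_j$ is then read off as the symmetric-difference element in that block. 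Since $\D^1$ and $\D^2$ produce the same $\mathcal{W}$ and the reconstruction is deterministic, I would conclude $\D^1_j=\D^2_j$ for all $j$ by induction on $j$, i.e.\ $\D^1=\D^2$, contradicting distinctness.

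The delicate point — and the main obstacle — is verifying that the reconstruction step is genuinely well-defined: that at stage $j$ there is a \emph{unique} unused member of $\mathcal{W}$ obtainable from $U^\D_{j-1}$ by such a single-block replacement, so that no ambiguity arises that could let $\D^1$ and $\D^2$ diverge. This requires ruling out, for a fixed partial ascending permutation, the possibility that two distinct indices $j<j'$ give $U^\D_{j'}$ reachable from $U^\D_{j-1}$ by a legal one-block move; here I would use that $|U^\D_j|$ is nondecreasing in $j$ (it increases precisely when $\D_j$ is the first element of its block to appear) and that within a block the representatives are encountered in strictly increasing order (the defining property of $\Delta$), so the ``trajectory'' $j\mapsto U^\D_j$ visits each member of $\mathcal{W}$ exactly once and monotonically. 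An alternative, perhaps cleaner, route that avoids the reconstruction bookkeeping: argue directly that $\{U^{\D^1}_j\}=\{U^{\D^2}_j\}$ forces $\D^1=\D^2$ by identifying, for each $j$, the element $\D^1_j$ as the unique element lying in exactly $n-j+1$ many of the sets in the collection that are ``$\supseteq$-maximal'' within an appropriate subfamily — but I expect the inductive reconstruction above to be the most transparent, with the uniqueness of the one-block successor being the step that needs the most care.
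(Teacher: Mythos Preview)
Your reconstruction strategy is different from the paper's route and is sound in spirit, but the step you yourself flag as ``delicate'' is a genuine gap as written. The rule ``the unused member of $\mathcal{W}$ that differs from $U^\D_{j-1}$ in exactly one \GUB block, with the new representative strictly larger'' does \emph{not} pin down a unique successor. Concretely, take $n=4$, $N_1=\{1,2,3\}$, $N_2=\{4\}$, and $\D=(1,2,3,4)\in\Delta$, so $\mathcal{W}=\{\{1\},\{2\},\{3\},\{3,4\}\}$. From $U^\D_1=\{1\}$ both $\{2\}$ and $\{3\}$ are one-block moves with a strictly larger $N_1$-representative, and both lie in the unused part of $\mathcal{W}$. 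Your monotonicity remarks (nondecreasing $|U^\D_j|$, increasing representatives within a block) do not by themselves exclude the jump to $\{3\}$. The easy fix is to sharpen the rule using \cref{lem::Uj-Uj-1}: for $\D\in\Delta$ the move from $U^\D_{j-1}$ to $U^\D_j$ either adds $i_{k-1}+1$ to an unrepresented block $N_k$, or replaces the current representative $r\in N_k$ by $r+1$; with ``increment by exactly one'' the successor in $\mathcal{W}$ is unique (any $U^\D_{j'}$ with $j'>j$ differs from $U^\D_{j-1}$ either in at least two blocks, or in the same block by more than one). Once you state and use this, your induction goes through.

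The paper avoids the reconstruction bookkeeping altogether. It argues directly: let $\ell$ be the first index with $\D^1_\ell\neq\D^2_\ell$ (so $\D^1(j)=\D^2(j)$ for $j<\ell$), show the two elements lie in different \GUB blocks $N_{k_1}\neq N_{k_2}$, and then verify $U^{\D^1}_\ell\neq U^{\D^2}_j$ for every $j$ by comparing the $N_{k_1}$-representative when $j\le\ell-1$ and the $N_{k_2}$-representative when $j\ge\ell$. This exhibits a specific set in one collection missing from the other in a few lines, without needing to prove any reconstruction is well-defined. Your approach, once patched, yields the same conclusion and additionally gives an explicit inverse map $\mathcal{W}\mapsto\D$, but at the cost of more machinery; the paper's first-divergence argument is shorter and more to the point.
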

\begin{proof}
See Appendix \ref{appendixA}.
\end{proof}
Although different partial ascending permutations must define different set collections (as stated in \cref{lem:D1-D2}), they may still yield the same \LEPI, as demonstrated in the following example.
\begin{Example}\label{example4}
	Letting $n=4$, $N_1=\{1,2\}$, $N_2=\{3,4\}$, and $a=(1,2,3,4)^\top$, then $X_0$ reduces to 
	\begin{equation*}
		X_0=\{(w,x)\in \R\times\{0,1\}^4\,:\, w\geq f(x_1+2x_2+3x_3+4x_4),
		~x_1+x_2\leq 1, ~x_3+x_4\leq 1\}.
	\end{equation*}
	Consider the concave function $f(z) = \min\{ 2z , z+5 \}$.
	One can verify that permutations $(1,2,3,4)$ and $(1,3,2,4)$ are partial ascending, with corresponding set collections $\{\{1\},\{2\},\{2,3\},\{2,4\}\}$ and $\{\{1\},\{1,3\},\{2,3\},\{2,4\}\}$, respectively. 
	Using \cref{thm:lifting} and \cref{prop:W-equals-U}(iv), it is simple to see that the two distinct partial ascending permutations, however, define the same \eqref{LEPI}:
	\begin{equation*}
		w\geq 2x_1+4x_2+6x_3+7x_4.
	\end{equation*} 
\end{Example}

\subsection{An efficient algorithm for computing \LEPIs corresponding to partial ascending permutations}\label{section4.3}

By \cref{coro:validD}, to characterize all \LEPIs, it suffices to characterize \LEPIs defined by partial ascending permutations.
In the following, we further show that different from the general case which requires $\mathcal{O}(n^2)$ time (see \cref{prop:complex}), \eqref{LEPI} defined by any permutation $\D\in \Delta$ can be computed in $\mathcal{O}(n)$ time.
To achieve this, we need the following two lemmas.  
\begin{Lemma}\label{lem:valideta}
	Given any $\D\in \Delta$ and $j\in [n]$, it follows that $f(a(U^\D_j))=\sum_{i\in U^\D_j} \eta^\D_i$, or equivalently, 
	\begin{equation}\label{coe:valid}
		\eta^\D_{\D_j}
		=f(a(U^\D_j))-\sum_{i\in U^\D_j\backslash \D_j}\eta^\D_i,~\forall~j \in [n].
	\end{equation}
\end{Lemma}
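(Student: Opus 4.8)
The plan is to prove the identity $f(a(U^\D_j))=\sum_{i\in U^\D_j}\eta^\D_i$ for all $j\in[n]$ by induction on $j$, mirroring the structure of the proof of \cref{lem:propertyeta} but now exploiting the simplification afforded by $\D\in\Delta$. Recall from \cref{prop:W-equals-U} that when $\D\in\Delta$ we have $W^\D_j=U^\D_j$ and $h^\D_j=j$ for all $j\in[n]$; in particular \cref{thm:lifting} gives the closed form $\eta^\D_{\D_j}=f(a(U^\D_j))-\sum_{i\in U^\D_j\backslash\D_j}\eta^\D_i$ directly, so the two displayed formulas in the statement are trivially equivalent and it remains only to establish the first one.

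For the base case $j=1$: since $\D\in\Delta$, $U^\D_1=\{\D_1\}$, and by \cref{lem:case j=1} we have $\eta^\D_{\D_1}=f(a_{\D_1})=f(a(U^\D_1))$, so $\sum_{i\in U^\D_1}\eta^\D_i=f(a(U^\D_1))$. For the inductive step, assume the identity holds for all indices $1,\dots,\tau-1$ and consider $j=\tau$. Let $k'\in[t]$ be such that $\D_\tau\in N_{k'}$. Because $\D\in\Delta$, \cref{prop:W-equals-U}(ii) guarantees $\D_\tau=\max_{i\in N_{k'}\cap\D(\tau)}i$, so $\D_\tau\in U^\D_\tau$, and moreover $U^\D_\tau\cap N_{k'}=\{\D_\tau\}$ while for every other $k\in[t]\backslash k'$ the maximizer is unchanged when passing from $\D(\tau-1)$ to $\D(\tau)$ (as $\D_\tau\notin N_k$), i.e. $U^\D_\tau\backslash\D_\tau=U^\D_{\tau-1}\backslash U''$ where $U'':=U^\D_{\tau-1}\cap N_{k'}$. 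From \cref{thm:lifting} (using $h^\D_\tau=\tau$, $W^\D_\tau=U^\D_\tau$) we get $\eta^\D_{\D_\tau}=f(a(U^\D_\tau))-\sum_{i\in U^\D_\tau\backslash\D_\tau}\eta^\D_i$, and rearranging immediately yields $\sum_{i\in U^\D_\tau}\eta^\D_i=f(a(U^\D_\tau))$, which closes the induction.

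In fact this argument is essentially immediate: the closed-form expression for $\eta^\D_{\D_j}$ from \cref{thm:lifting} specialized to $\D\in\Delta$ is exactly the rearrangement of the claimed identity, so the only content is verifying that $\D_j\in U^\D_j$ (so that the sum over $U^\D_j$ legitimately splits as $\eta^\D_{\D_j}$ plus the sum over $U^\D_j\backslash\D_j$), which is \cref{prop:W-equals-U}(ii). The main (very minor) obstacle is just to state cleanly that no induction is truly needed for the equivalence of the two displays — only \cref{thm:lifting} plus \cref{prop:W-equals-U} — whereas a self-contained derivation of the identity $f(a(U^\D_j))=\sum_{i\in U^\D_j}\eta^\D_i$ that does not invoke \cref{thm:lifting} would require the induction sketched above together with the observation that $U^\D_{j-1}$ and $U^\D_j$ differ only in the $N_{k'}$-component. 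I would present the short version: invoke \cref{thm:lifting} and \cref{prop:W-equals-U}(iv) to write $\eta^\D_{\D_j}=f(a(U^\D_j))-\sum_{i\in U^\D_j\backslash\D_j}\eta^\D_i$, note $\D_j\in U^\D_j$ by \cref{prop:W-equals-U}(ii), and conclude.
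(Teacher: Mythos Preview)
Your proposal is correct, and the short version you recommend at the end---invoking \cref{thm:lifting} and \cref{prop:W-equals-U}(iv) to obtain $\eta^\D_{\D_j}=f(a(U^\D_j))-\sum_{i\in U^\D_j\backslash\D_j}\eta^\D_i$ and then using $\D_j\in U^\D_j$---is precisely the paper's proof. The inductive argument you sketch first is correct but, as you note yourself, unnecessary once \cref{thm:lifting} is available.
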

\begin{proof}
	The result follows from \cref{thm:lifting} and \cref{prop:W-equals-U}(iv).
\end{proof}
\begin{Lemma}\label{lem::Uj-Uj-1}
	Given any $\D\in \Delta$, $j \in [n]$, and $k \in [t]$ with $\D_j \in N_k = \{i_{k - 1} + 1, i_{k - 1} + 2, \ldots, i_{k}\}$, it follows that 
	\begin{itemize}
		\item [(i)] if $\D_j = i_{k-1}+1$, then $U^\D_j = U^\D_{j - 1} \cup \{\D_j\}$;
		\item [(ii)] if $i_{k - 1} + 2 \leq \D_j \leq i_{k}$, then $\D_j -1 \in  U^\D_{j - 1}$ and $U^\D_j = U^\D_{j - 1} \backslash \{\D_j - 1\} \cup \{\D_j\}$.
	\end{itemize} 
\end{Lemma}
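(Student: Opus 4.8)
The plan is to unwind the definitions of $U^\D_j$ and $U^\D_{j-1}$ and compare them componentwise across the \GUB sets $N_1, \ldots, N_t$, using the key fact that $\D$ is partial ascending. Since $\D(j) = \D(j-1) \cup \{\D_j\}$, for every $k' \in [t] \setminus k$ we have $N_{k'} \cap \D(j) = N_{k'} \cap \D(j-1)$, so $\max_{i \in N_{k'} \cap \D(j)} i = \max_{i \in N_{k'} \cap \D(j-1)} i$; hence $U^\D_j$ and $U^\D_{j-1}$ can differ only in their intersection with $N_k$. It therefore suffices to determine $U^\D_j \cap N_k$ and $U^\D_{j-1} \cap N_k$.

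First I would handle the $N_k$-component. By \cref{prop:W-equals-U}(ii), since $\D \in \Delta$ and $\D_j \in N_k$, we have $U^\D_j \cap N_k = \{\D_j\}$, i.e. $\D_j = \max_{i \in N_k \cap \D(j)} i$. In particular $\D_i < \D_j$ for every $i < j$ with $\D_i \in N_k$. For case (i), where $\D_j = i_{k-1}+1$ is the smallest index in $N_k$: no earlier item of $\D$ can lie in $N_k$ (such an item would be strictly smaller than $\D_j$, impossible), so $N_k \cap \D(j-1) = \varnothing$, giving $U^\D_{j-1} \cap N_k = \varnothing$; combined with the componentwise equality on all other $N_{k'}$, this yields $U^\D_j = U^\D_{j-1} \cup \{\D_j\}$. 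For case (ii), where $i_{k-1}+2 \le \D_j \le i_k$: I would argue that $\D_j - 1$ must already appear among $\D_1, \ldots, \D_{j-1}$. Indeed, if $\D_j - 1$ appeared at some position $\ge j$, then because $\D_j - 1 \in N_k$, $\D_j - 1 < \D_j$, and $\D \in \Delta$, the item $\D_j - 1$ would have to precede $\D_j$ — contradiction. So $\D_j - 1 \in \D(j-1) \cap N_k$. Moreover $\D_j - 1 = \max_{i \in N_k \cap \D(j-1)} i$: any $i \in N_k \cap \D(j-1)$ satisfies $i < \D_j$ (shown above), hence $i \le \D_j - 1$. Therefore $U^\D_{j-1} \cap N_k = \{\D_j - 1\}$, and since $U^\D_j \cap N_k = \{\D_j\}$, we get $\D_j - 1 \in U^\D_{j-1}$ and $U^\D_j = U^\D_{j-1} \setminus \{\D_j - 1\} \cup \{\D_j\}$.

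The only mildly delicate point — and the part I would write out most carefully — is the claim in case (ii) that $\D_j - 1$ has already been placed before position $j$; it relies squarely on the partial-ascending property and on $\D_j - 1$ lying in the same \GUB set $N_k$ as $\D_j$ (which is where the index-contiguity assumption \eqref{assumption:Nk} on $N_k$ enters). Everything else is bookkeeping: splitting $U^\D_j$ and $U^\D_{j-1}$ over the partition $\{N_{k'}\}_{k' \in [t]}$ and noting the components outside $N_k$ are untouched when we go from $\D(j-1)$ to $\D(j)$. I expect no real obstacle here; the lemma is essentially an immediate consequence of \cref{prop:W-equals-U}(ii) together with the structure of partial ascending permutations.
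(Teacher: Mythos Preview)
Your proposal is correct and follows essentially the same approach as the paper: the paper's proof is a one-line appeal to the definition of $U^\D_j$ and the definition of partial ascending permutations, and your argument is precisely the detailed unwinding of those definitions. The componentwise comparison over the partition $\{N_{k'}\}$ and the use of the partial-ascending property to place $\D_j-1$ before position $j$ are exactly the content implicit in that appeal.
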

\begin{proof}
	The result follows from the definition of $U_j^\D$ in \eqref{def:U} and the definition of partial ascending permutations in \cref{Def:validperm}. 
\end{proof}

Given any $\D\in \Delta$ and $j\in [n]$, $k\in [t]$ with $\D_j\in N_k$, it follows from \cref{lem:valideta,lem::Uj-Uj-1} that
\begin{equation}\label{computeeta1}
	\eta^\D_{\D_j} = \left\{
	\begin{aligned}
		& f(a(U^\D_j))-f(a(U^\D_{j-1})),  && ~\text{if}~ \D_j = i_{k - 1} + 1, \\
		& f(a(U^\D_j))-f(a(U^\D_{j-1}))+\eta^\D_{\D_j-1},  && ~\text{if}~ i_{k - 1} + 2 \leq \D_j \leq i_{k}.
	\end{aligned}
	\right.
\end{equation}
Using \eqref{computeeta1}, we can design an $\mathcal{O}(n)$-time  algorithm to  compute $\{\eta^\D_{\D_j}\}_{j\in [n]}$. 
Specifically, letting $A_j=a(U^\D_j)$ for $j\in 0\cup [n]$, then, by \cref{lem::Uj-Uj-1}, $\{A_j\}_{j \in 0 \cup [n]}$ can be computed in $\CO(n)$ time using the following recursive formula
\begin{equation*}
	A_0=0, \quad A_j=  \left\{
	\begin{aligned}
		& A_{j-1} + a_{\D_j},  && ~\text{if}~ \D_j = i_{k - 1} + 1, \\
		& A_{j-1} + a_{\D_j}- a_{\D_j -1} && ~\text{if}~  i_{k - 1} + 2 \leq \D_j \leq i_{k},
	\end{aligned}
	\right.~\forall~j \in [n].
\end{equation*}
As a result, \eqref{computeeta1} reduces to
\begin{equation}\label{computeeta3}
	\eta^\D_{\D_j} = \left\{
	\begin{aligned}
		& f(A_{j})-f(A_{j-1}),  && ~\text{if}~ \D_j = i_{k - 1} + 1, \\
		& f(A_{j})-f(A_{j-1})+\eta^\D_{\D_j-1}, && ~\text{if}~  i_{k - 1} + 2 \leq \D_j \leq i_{k}.
	\end{aligned}
	\right.
\end{equation}
Note that from \cref{lem::Uj-Uj-1}, if $i_{k - 1} + 2 \leq \D_j \leq i_{k}$,  then $\D_j-1\in U^\D_{j-1}\subseteq \D(j-1)$.
Thus, by \eqref{computeeta3}, we can compute $\{\eta^\D_{\D_j}\}_{j\in [n]}$ sequentially in the order $\{\eta^\D_{\D_1}, \eta^\D_{\D_2}, \dots, \eta^\D_{\D_n}\}$, which takes $\CO(n)$ time.

\begin{Theorem}\label{thm:coecompute}
	Given any permutation $\D\in \Delta$, the coefficients $\{\eta^\D_{\D_j}\}_{j\in[n]}$ of \eqref{LEPI} defined by $\D$ can be computed in $\mathcal{O}(n)$ time.
\end{Theorem}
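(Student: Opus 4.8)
The plan is to assemble the ingredients already in place just before the statement. For $\D\in\Delta$, \cref{lem:valideta} gives the closed form $\eta^\D_{\D_j}=f(a(U^\D_j))-\sum_{i\in U^\D_j\backslash\D_j}\eta^\D_i$, and \cref{lem::Uj-Uj-1} shows that $U^\D_j$ is obtained from $U^\D_{j-1}$ by a single change: an insertion of $\D_j$ when $\D_j=i_{k-1}+1$, or the swap $\D_j-1\mapsto\D_j$ when $i_{k-1}+2\le\D_j\le i_k$ (here $N_k$ is the block containing $\D_j$). Taking the difference of the closed form at $j$ and at $j-1$ collapses the sum $\sum_{i\in U^\D_j\backslash\D_j}\eta^\D_i$ against $\sum_{i\in U^\D_{j-1}\backslash\D_{j-1}}\eta^\D_i$ and yields exactly the recursion \eqref{computeeta3}. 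So the proof reduces to checking that \eqref{computeeta3}, together with the recursion for $A_j:=a(U^\D_j)$, can be evaluated in a single left-to-right sweep over $j=1,\dots,n$.

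Concretely, I would first spend $\mathcal{O}(n)$ time on preprocessing: using the indexing convention \eqref{assumption:Nk}, build (in one pass over the blocks) a flag array telling, for each index $v\in[n]$, whether $v=i_{k-1}+1$ for its block $k$ — this is all the case distinction in \eqref{computeeta3} needs — so that each per-step lookup costs $\mathcal{O}(1)$. Next, maintain the scalar $A_j=a(U^\D_j)$: set $A_0=0$ and, at step $j$, put $A_j=A_{j-1}+a_{\D_j}$ in the first case and $A_j=A_{j-1}+a_{\D_j}-a_{\D_j-1}$ in the second, each an $\mathcal{O}(1)$ update justified by \cref{lem::Uj-Uj-1}. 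Finally, compute the coefficients in the order $\eta^\D_{\D_1},\eta^\D_{\D_2},\dots,\eta^\D_{\D_n}$ via \eqref{computeeta3}, which at step $j$ uses only $f(A_{j-1})$, $f(A_j)$, and (in the second case) the already-stored value $\eta^\D_{\D_j-1}$.

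The only point requiring a genuine argument is that this evaluation order is well-posed: in the case $i_{k-1}+2\le\D_j\le i_k$, computing $\eta^\D_{\D_j}$ refers to $\eta^\D_{\D_j-1}$, so $\D_j-1$ must precede $\D_j$ in $\D$. This is precisely \cref{lem::Uj-Uj-1}(ii), which gives $\D_j-1\in U^\D_{j-1}\subseteq\D(j-1)$, so $\eta^\D_{\D_j-1}$ has already been produced by an earlier step. Granting the standard unit-cost assumptions (one evaluation of $f$ and one arithmetic operation cost $\mathcal{O}(1)$), the three stages run in $\mathcal{O}(n)$, $\mathcal{O}(n)$, and $\mathcal{O}(n)$ time, proving the bound. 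I do not expect a real obstacle: the substance is entirely in \cref{lem:valideta} and \cref{lem::Uj-Uj-1}, which we may assume, and the theorem is essentially a bookkeeping corollary — the one thing to be careful about is stating the cost model for $f$ and verifying the dependency order just described.
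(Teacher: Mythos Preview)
Your proposal is correct and follows essentially the same approach as the paper: derive the recursion \eqref{computeeta3} from \cref{lem:valideta} and \cref{lem::Uj-Uj-1}, maintain $A_j=a(U^\D_j)$ by an $\mathcal{O}(1)$ update per step, and observe via \cref{lem::Uj-Uj-1}(ii) that $\D_j-1\in\D(j-1)$ so the left-to-right evaluation order is well-posed. Your extra remarks on the flag-array preprocessing and the unit-cost model for $f$ are sensible elaborations the paper leaves implicit.
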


\section{Linear description of conv$(X_0)$ and exact separation}\label{section5}
In this section, we further present two key properties of the \LEPIs (defined by partial ascending permutations):
(i) all \LEPIs, together with bound and \GUB constraints, are able to fully characterize $\conv(X_0)$; and (ii) the separation of the exponential family of the \LEPIs (or equivalently, $\conv(X_0)$) can be implemented in $\CO(n\log n)$ time. 
To achieve this, we first investigate the separation problem of $\conv(X_0)$.

Given a point $(w^*,x^*) $, the separation problem of $\conv(X_0)$ attempts to either find a valid inequality that is violated by $(w^*, x^*)$, or to prove that no such inequality exists (i.e., $(w^*, x^*) \in \conv(X_0)$).
Since the trivial inequalities $0\leq x_i\leq 1$ for $i\in [n]$ and $\sum_{i\in N_k}x_i\leq 1$ for $k\in [t]$  can be  checked for violation in linear time, we can, without loss of generality, assume that $(w^*,x^*) \in \R \times [0, 1]^n$ with $\sum_{i \in N_k} x_i^* \leq 1$ for all $k \in [t]$.
The following lemma characterizes a property of  non-trivial facet-defining inequalities for $\conv(X_0)$.
\begin{Lemma}\label{lem:non-trivialfacet}
	Any non-trivial facet-defining  inequality $\pi_0 + \sum_{i=1}^n\pi_ix_i \leq \alpha w$ for $\conv(X_0)$ satisfies $\alpha=1$ up to scaling.
\end{Lemma}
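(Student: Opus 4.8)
The plan is to show that $\alpha > 0$ and then normalize. First I would argue $\alpha \neq 0$: if $\alpha = 0$, the inequality reads $\pi_0 + \sum_{i=1}^n \pi_i x_i \leq 0$, which involves only the $x$-variables. Since $(w,x) \in X_0$ imposes no finite upper bound on $w$ (for any feasible $x$, the point $(w,x)$ with $w$ arbitrarily large lies in $X_0$), such an inequality is valid for $\conv(X_0)$ if and only if it is valid for the projection $\{x \in [0,1]^n : \sum_{i \in N_k} x_i \leq 1,~\forall k \in [t]\}$, which is a bounded polytope whose facets are exactly the trivial inequalities $x_i \geq 0$, $x_i \leq 1$, and $\sum_{i \in N_k} x_i \leq 1$ (the GUB polytope). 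Hence any facet-defining inequality with $\alpha = 0$ would be one of the trivial inequalities, contradicting the hypothesis that it is non-trivial.

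Next I would show $\alpha > 0$ (ruling out $\alpha < 0$). Suppose $\alpha < 0$; rewrite the inequality as $w \leq (1/\alpha)(\pi_0 + \sum_i \pi_i x_i)$, i.e., it provides an \emph{upper} bound on $w$ over $\conv(X_0)$. But for any fixed feasible $x^*$ (e.g. $x^* = \mathbf{0}$), all points $(w, x^*)$ with $w \geq f(a^\top x^*)$ lie in $X_0$, so $w$ is unbounded above on $\conv(X_0)$ along this ray, contradicting that the inequality is valid. Therefore $\alpha > 0$, and scaling the inequality by $1/\alpha > 0$ (which preserves both validity and the facet-defining property, since scaling by a positive constant does not change the face it defines) yields the normalized form with $\alpha = 1$.

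The main obstacle is being careful about what "non-trivial" excludes and making the $\alpha = 0$ case airtight: one must confirm that the only facet-defining inequalities of the GUB polytope $\{x \in [0,1]^n : \sum_{i\in N_k} x_i \leq 1\}$ are the trivial ones listed, and that a facet-defining inequality of $\conv(X_0)$ with $\alpha = 0$ restricts to a facet-defining (or at least non-redundant) inequality of this polytope — this uses the fact that $\conv(X_0)$ is (up to the unbounded $w$-direction) a prism over the GUB polytope, so its facets with $\alpha = 0$ correspond exactly to facets of the base polytope. The remaining sign argument via the recession direction $(1,\mathbf{0})$ of $\conv(X_0)$ is routine, as is the positive rescaling.
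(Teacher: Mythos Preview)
Your proposal is correct and follows essentially the same approach as the paper: both use the recession direction $(1,\mathbf{0})$ to force $\alpha\ge 0$, then argue that $\alpha=0$ yields a trivial inequality, and finally rescale. The only minor difference is in the $\alpha=0$ step: you invoke the prism structure and the (well-known) facet description of the GUB polytope, whereas the paper computes $-\pi_0=\max_{x}\sum_i\pi_ix_i$ explicitly and exhibits the inequality as a nonnegative combination of $\sum_{i\in N_k}x_i\le 1$ and $x_i\ge 0$ directly; both arguments lead to the same conclusion.
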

\begin{proof}
	Since $(w,x)=(1,\boldsymbol{0})$ is a ray of $\conv(X_0)$, $\alpha$ must be non-negative.
	When $\alpha=0$, for $\pi_0 + \sum_{i=1}^n\pi_ix_i \leq 0$ to be facet-defining for $\conv(X_0)$,  it follows that
	\begin{equation*}
		\begin{aligned}
			-\pi_0=&\max_x\left\{\sum_{i=1}^n \pi_ix_i\,:\, (w,x)\in X_0\right\}\\
			=&\max_x\left\{\sum_{k=1}^t\sum_{i\in N_k} \pi_ix_i\,:\,x\in \{0,1\}^n,
			~\sum_{i\in N_k}x_i\leq 1,~\forall~k\in [t]\right\}=
			\sum_{k=1}^t \max\left\{\max_{i\in N_k}\pi_i, 0\right\}.
		\end{aligned}
	\end{equation*}
	Thus, inequality $\pi_0 + \sum\limits_{i=1}^n\pi_ix_i \leq 0$, or equivalently, $\sum_{k=1}^t\left(\sum\limits_{i\in N_k}\pi_ix_i 
		- \max\left\{\max\limits_{i\in N_k}\pi_i, 0\right\} \right)\leq 0$, 
	is dominated by trivial inequalities $\sum_{i \in N_k} x_i \leq 1$ for $ k \in [t]$ and $x_i \geq 0$ for $i\in [n]$, 
	a contradiction with the fact that $\pi_0 + \sum_{i=1}^n\pi_ix_i \leq 0$ is a non-trivial facet-defining inequality for $\conv(X_0)$.
	Therefore, $\alpha>0$ must hold, and hence inequality $\pi_0 + \sum_{i=1}^n\pi_ix_i \leq \alpha w$ can be scaled so that $\alpha=1$. 
\end{proof}

Using \cref{lem:non-trivialfacet}, we can reformulate the separation problem for $\conv(X_0)$ as an optimization problem: 
\begin{equation}
	\label{problem:separation}
	\tag{\text{SEP}}
	\max_{\pi_0,\, \pi}\left\{\pi_0+\sum_{i=1}^n\pi_ix^*_i\,:\,(\pi_0, \pi)\in \R\times \R^n,~
	\pi_0+\sum_{i=1}^n\pi_ix_i\leq w,~\forall~(w, x)\in X_0 \right\}.
\end{equation}
Letting $(\bar{\pi}_0, \bar{\pi})\in \R\times \R^n$ be an optimal solution of problem \eqref{problem:separation},
if $w^* \geq \bar{\pi}_0 + \sum_{i = 1}^{n} \bar{\pi}_i x^*$, 
then $(w^*, x^*) \in \conv(X_0)$; otherwise, $(w^*, x^*)$ violates the valid inequality $w \geq \bar{\pi}_0 + \sum_{i = 1}^{n} \bar{\pi}_i x$.
Given a point $(w, x) \in X_0 \subseteq \R \times \{0, 1\}^n$, we denote the support of $x$ by $S = \{i \in [n] \,:\, x_i = 1\}$.
Then, by the definition of $X_0$, 
it follows that $(w, x) \in X_0$ holds if and only if $
S \in \X$ and $w \geq f(a(S))$ hold, where 
\begin{equation}\label{Xdef}
	\X:= \{ S \subseteq [n]\, : \ |S \cap N_k| \leq 1,~\forall~k \in [t] \}.
\end{equation}
Therefore, problem \eqref{problem:separation} is equivalent to
\begin{equation}
	\tag{\text{SEP}'}
	\label{prob:equi-separation}
	\max_{\pi_0,\, \pi}\left\{\pi_0+\sum_{i=1}^n\pi_ix^*_i\,:\,(\pi_0, \pi)\in \R\times \R^n,~
	\pi_0+\sum_{i\in S}\pi_i\leq f(a(S)),~\forall~S\in\X \right\}.
\end{equation}
Next, we attempt to find an optimal solution of the separation problem \eqref{prob:equi-separation}.
To do this, we first define a vector $y^* \in\R^n$ as follows:
\begin{equation}\label{def:y}
	y_{\ell}^* =\sum_{i=\ell}^{i_k}x^*_i,~\forall~\ell\in N_k=\{i_{k-1}+1,\dots, i_k\},~k\in [t].
\end{equation}
Combining \eqref{def:y}, $\sum_{i \in N_k} x^*_i \leq 1$ for  $k \in [t]$, and $x^*_i\geq 0$ for $i\in [n]$, we obtain
\begin{equation}\label{property:y} 
	1\geq y_{i_{k-1}+1}^* \geq y_{i_{k-1}+2}^* \geq \dots \geq y_{i_k}^* \geq 0,~\forall~ k\in [t].
\end{equation}
Let  $\D\in \Delta$ be a partial ascending permutation satisfying
\begin{equation}\label{permY}
	y^*_{\D_1}\geq y^*_{\D_2}\geq \dots \geq y^*_{\D_n}.
\end{equation}
Note that such a permutation exists. 
Indeed, we can first find a permutation $\D'$ satisfying \eqref{permY} by sorting $\{y_i^*\}_{i \in [n]}$. 
If $\D'\in \Delta$, we are done. 
Otherwise, from \cref{Def:validperm}, there must exist $i, j \in [n]$ and some $k \in [t]$ such that $i < j $, $\D'_i, \D'_j \in N_k$, and $\D'_i > \D'_j$. 
By  $i < j$ and \eqref{permY}, it follows  $y^*_{\D'_i} \geq y^*_{\D'_j}$; and by $
\D'_i > \D'_j$, $\D'_i, \D'_j \in N_k$, and \eqref{property:y}, it follows $y^*_{\D'_i} \leq y^*_{\D'_j}$. 
As a result, $y_{\D'_i}^* = y_{\D'_j}^*$ must hold, and thus swapping the values of $y^*_{\D'_i}$ and $y^*_{\D'_j}$ still yields a permutation satisfying \eqref{permY}. 
Therefore,
\begin{Remark}\label{remark4}
	For a permutation $\bar{\D}$ satisfying \eqref{permY}, setting $\D_j := i_{k-1} +  |\bar{\D}(j-1) \cap N_k|+1$  for each $j \in [n]$ with $\bar{\D}_j\in N_k$ will yield a permutation $\D\in \Delta$ satisfying \eqref{permY}. 
	As a result, transforming a permutation $\bar{\D}$ satisfying \eqref{permY} into a permutation $\D\in \Delta$ satisfying \eqref{permY} can be conducted in $\CO(n)$ time.
\end{Remark}

We are now ready to present the main result of this section, i.e., deriving an optimal solution of the separation problem \eqref{prob:equi-separation}.
\begin{Theorem}\label{lem:solveseparation}
	Given a point $(w^*,x^*) \in \R \times [0, 1]^n$ with $\sum_{i \in N_k} x_i^* \leq 1$ for all $k \in [t]$, let $y^*$ be defined as in \eqref{def:y}, $\D\in \Delta$ be a partial ascending permutation such that $y^*_{\D_1}\geq y^*_{\D_2}\geq \dots \geq y^*_{\D_n}$, and vector $\eta^\D = (\eta^\delta_1, \ldots, \eta^\delta_n)$ be defined as in \eqref{coe:valid}. 
	Then $(\pi_0, \pi)=(0, \eta^\D)$ is an optimal solution to problem \eqref{prob:equi-separation}.
\end{Theorem}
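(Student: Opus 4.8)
The plan is to prove that $(\pi_0,\pi)=(0,\eta^\D)$ is optimal for the linear program \eqref{prob:equi-separation} by a weak-duality argument: I will check feasibility of $(0,\eta^\D)$ directly, and then show that \emph{every} feasible solution has objective value at most $\sum_{i=1}^n\eta^\D_i x^*_i$ by aggregating a carefully chosen nonnegative combination of the constraints. First I would verify feasibility of $(0,\eta^\D)$: the constraint indexed by $S=\varnothing$ is $\pi_0\leq f(0)=0$, which holds with equality, and for every nonempty $S\in\X$ the constraint $\sum_{i\in S}\eta^\D_i\leq f(a(S))$ is exactly \cref{lem:validity}. Thus $(0,\eta^\D)$ is feasible with objective value $\sum_{i=1}^n\eta^\D_i x^*_i$.

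For the matching upper bound, the key object is the family of sets $\{U^\D_j\}_{j\in\{0\}\cup[n]}$ (with $U^\D_0=\varnothing$); each $U^\D_j$ lies in $\X$, and since $\D\in\Delta$, \cref{lem:valideta} gives $f(a(U^\D_j))=\sum_{i\in U^\D_j}\eta^\D_i$. Put $\mu_j:=y^*_{\D_j}$ for $j\in[n]$, $\mu_0:=1$, and $\mu_{n+1}:=0$; by the ordering \eqref{permY} together with \eqref{property:y} we have $1=\mu_0\geq\mu_1\geq\cdots\geq\mu_n\geq\mu_{n+1}=0$, so the weights $\mu_j-\mu_{j+1}$ are nonnegative and telescope to $1$. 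Given any feasible $(\pi_0,\pi)$, I would multiply the constraint indexed by $U^\D_j$, namely $\pi_0+\sum_{i\in U^\D_j}\pi_i\leq f(a(U^\D_j))$, by $\mu_j-\mu_{j+1}$ and sum over $j\in\{0\}\cup[n]$. Using $\sum_j(\mu_j-\mu_{j+1})=1$ and $f(a(U^\D_j))=\sum_{i\in U^\D_j}\eta^\D_i$, both sides rearrange into $\sum_{i=1}^n\pi_i\big(\sum_{j:\,i\in U^\D_j}(\mu_j-\mu_{j+1})\big)$ (plus $\pi_0$) on the left and $\sum_{i=1}^n\eta^\D_i\big(\sum_{j:\,i\in U^\D_j}(\mu_j-\mu_{j+1})\big)$ on the right.

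The crux is therefore the identity $\sum_{j:\,i\in U^\D_j}(\mu_j-\mu_{j+1})=x^*_i$ for every $i\in[n]$, which would simultaneously make the left side equal the objective $\pi_0+\sum_{i=1}^n\pi_i x^*_i$ and the right side equal $\sum_{i=1}^n\eta^\D_i x^*_i$. To establish it I would invoke \cref{lem::Uj-Uj-1} together with the fact that, since $\D\in\Delta$, the elements of any block $N_k$ appear in $\D$ in increasing order of index: an element $i\in N_k$ enters the family $\{U^\D_\cdot\}$ precisely at the step where $\D$ introduces $i$ and leaves precisely at the step where $\D$ introduces $i+1$ (or never leaves, when $i=i_k$ is the last index of $N_k$). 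Hence $\{j:i\in U^\D_j\}$ is a contiguous set of indices $\{p,\dots,q-1\}$, so $\sum_{j=p}^{q-1}(\mu_j-\mu_{j+1})=\mu_p-\mu_q$, and by the definition \eqref{def:y} of $y^*$ this equals $y^*_i-y^*_{i+1}=x^*_i$ (with the convention $y^*_{i+1}=0$ when $i=i_k$). Combining everything: the objective of any feasible $(\pi_0,\pi)$ is at most $\sum_{i=1}^n\eta^\D_i x^*_i$, and since $(0,\eta^\D)$ is feasible and attains this value, it is optimal.

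I expect the main obstacle to be the bookkeeping in the third paragraph---precisely establishing the ``entry/exit'' behaviour of indices in the family $\{U^\D_j\}_j$ and the resulting contiguity of $\{j:i\in U^\D_j\}$. This is exactly where the partial-ascending property of $\D$ and the two cases of \cref{lem::Uj-Uj-1} are indispensable; once that contiguity is secured, the remaining computations are routine telescoping sums.
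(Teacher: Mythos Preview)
Your proposal is correct and follows essentially the same route as the paper. The paper explicitly writes the dual LP \eqref{prob:dual-separation} and exhibits the dual feasible vector $\lambda^*_{U^\D_j}=y^*_{\D_j}-y^*_{\D_{j+1}}$, whereas you phrase the same computation as a direct nonnegative aggregation of primal constraints; your multipliers $\mu_j-\mu_{j+1}$ are precisely the paper's $\lambda^*_{U^\D_j}$, your key identity $\sum_{j:\,i\in U^\D_j}(\mu_j-\mu_{j+1})=x^*_i$ is exactly the paper's \eqref{eq:dual-cons}, and your entry/exit argument via \cref{lem::Uj-Uj-1} matches the paper's two cases. One small streamlining in your version: by using $f(a(U^\D_j))=\sum_{i\in U^\D_j}\eta^\D_i$ from \cref{lem:valideta} directly on the right-hand side of the aggregated inequality, you obtain the bound $\sum_i\eta^\D_i x^*_i$ in one step, whereas the paper establishes dual feasibility and then separately verifies the equality of primal and dual objectives in \eqref{eq:obj-equal}.
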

\begin{proof}
	From \cref{lem:validity},  $\sum_{i \in S} \eta^{\D}_i \leq f(a(S))$ holds for all $S \in \X$ (defined in \eqref{Xdef}), and thus $(\pi_0, \pi)=(0, \eta^\D)$ is a feasible solution of problem \eqref{prob:equi-separation}. 
	To establish the optimality, we consider the dual of the linear programming problem \eqref{prob:equi-separation}:
	\begin{equation}
		\label{prob:dual-separation}
		\tag{\text{D-SEP'}}
		\min_{\lambda}\left\{\sum_{S\in \X}\lambda_Sf(a(S))\,:\, \sum_{S\in \X :i \in S}\lambda_S=x^*_i,~\forall~ i \in [n], ~\sum_{S\in \X}\lambda_S=1, ~\lambda_S\geq 0,~\forall~ S\in \X\right\}.
	\end{equation}
	Define the vector $\lambda^*$ as follows:
	\begin{equation}\label{lambda}
		\lambda^*_S=
		\begin{cases}
			y^*_{\D_j}-y^*_{\D_{j+1}}, & ~\text{if}~ S=U^\D_j\text{~holds  for some } j\in 0 \cup [n],\\[5pt]
			0, & ~\text{otherwise},
		\end{cases}
		\quad \forall~S\in \X,
	\end{equation}
	where $U_{j}^\D$ is defined in \eqref{def:U},  $y^*_{\D_0}:=1$, and $y^*_{\D_{n+1}}:=0$.
	By \eqref{permY}, it follows that  $y^*_{\D_0} \geq y^*_{\D_1} \geq \cdots \geq  y^*_{\D_{n + 1}}$ and thus $\lambda^*_S\geq 0$ for all $S\in \X$.
	Moreover, $\sum_{S\in \X}\lambda^*_S=\sum_{j=0}^n(y^*_{\D_j}-y^*_{\D_{j+1}})
	=y^*_{\D_0}-y^*_{\D_{n+1}}=1-0=1$.
	Then, to show that $\lambda^*$ is a feasible solution to \eqref{prob:dual-separation}, it remains to prove that $\sum_{S\in \X :i \in S}\lambda^*_S=x^*_i$ holds for all $i \in [n]$,
	or equivalently, 
	\begin{equation}\label{eq:dual-cons}
		\sum_{j\,:\, i\in U^\D_j} (y^*_{\D_j} - y^*_{\D_{j + 1}})
		=x^*_i,~\forall~ i \in [n].
	\end{equation}
	Given any $r\in [n]$, let $k \in [t]$ be such that $r\in N_k=\{i_{k-1}+1,\dots,i_k\}$.
	We prove that \eqref{eq:dual-cons} holds at $i=r$ by considering the two cases $i_{k-1} + 1\leq r < i_k$ and $r=i_k$, separately.\\[8pt]
	(i) $i_{k-1} + 1 \leq r<i_k$.
	Then $i_{k-1} + 1 \leq r<r+1\leq i_k$, and thus $r,r+1\in N_k$.
	Let $j_1,j_2$ be such that $r=\D_{j_1}$ and $r+1=\D_{j_2}$.
	From $\D\in \Delta$, we have (a) $j_1<j_2$; (b)  $\max_{i\in N_k\cap \D(j)} i < \D_{j_1}=r$ for $j < j_1$; (c) $\max_{i\in N_k\cap \D(j)} i = \D_{j_1}=r$ for $j_1 \leq j < j_2$; and (d) $\max_{i\in N_k\cap \D(j)} i \geq \D_{j_2}> \D_{j_1}=r$ for $j \geq j_2$.
	These, together with the definition of $U_j^\D$ in \eqref{def:U}, imply that $\{j\in [n]\,:\, r\in U^\D_j\}=\{j_1,\dots,j_2-1\}$.
	Then we obtain
	\begin{equation*}
		\sum_{j\,:\, r\in U^\D_j}(y^*_{\D_j}-y^*_{\D_{j+1}})=
		\sum_{j=j_1}^{j_2-1}(y^*_{\D_j}-y^*_{\D_{j+1}})
		=y^*_{\D_{j_1}}-y^*_{\D_{j_2}}=y^*_{r}-y^*_{r+1}
		\stackrel{(a)}{=}\sum_{i=r}^{i_k}x^*_i-\sum_{i=r+1}^{i_k}x^*_i=x^*_{r},
	\end{equation*}
	where (a) follows from \eqref{def:y}.
	\\[8pt]
	(ii) $r=i_k$. Let $j_1\in [n]$ be such that $i_k=\D_{j_1}$.
	Similar to case (i), we can derive $\{j\in [n] \,:\, i_k\in U^\D_j\}=\{j_1,\dots, n\}$.
	Therefore,
	\begin{equation*}
		\begin{aligned}
			\sum_{j \,:\, i_k\in U^\D_j}(y^*_{\D_j}-y^*_{\D_{j+1}})= \sum_{j=j_1}^n(y^*_{\D_j}-y^*_{\D_{j+1}})
			= y^*_{\D_{j_1}}-y^*_{\D_{n+1}}=y^*_{i_k}-0= 
			\sum_{i=i_k}^{i_k}x^*_i=x^*_{i_k}.
		\end{aligned}
	\end{equation*}
	Combining the above two cases, \eqref{eq:dual-cons} must hold, and hence $\lambda^*$ is a feasible solution to the dual problem \eqref{prob:dual-separation}.
	
	To prove the optimality of $(0, \eta^\D)$, by strong duality of an \LP problem, it suffices to prove
	\begin{equation}\label{eq:obj-equal}
		\sum_{S\in \X}\lambda^*_Sf(a(S))=\sum_{i\in [n]}\eta^\D_i x^*_i.
	\end{equation}
	Observe that
	\begin{small}
	\begin{equation}\label{comp1}
		\begin{aligned}
		&\sum_{S\in \X}\lambda^*_Sf(a(S))= \sum_{j=0}^n
		 (y^*_{\D_j}-y^*_{\D_{j+1}}) f(a(U^\D_j))
		 \stackrel{(a)}{=}
		  \sum_{j\in [n]}[f(a(U^\D_j))-f(a(U^\D_{j-1}))]y^*_{\D_j}\\
		 &\qquad \stackrel{(b)}{=}
		 \sum_{j:\D_j\in [n]}[f(a(U^\D_j))-f(a(U^\D_{j-1}))]y^*_{\D_j}
		 =\sum_{k=1}^t \sum_{j:i_{k-1} + 1\leq\D_j\leq i_k}
		 	[f(a(U^\D_j))-f(a(U^\D_{j-1}))]\sum_{i=\D_j}^{i_k} x^*_i\\
		 &\qquad =\sum_{k=1}^t \sum_{i=i_{k-1}+1}^{i_k} x^*_i
		 \sum_{j:i_{k-1}+1\leq\D_j\leq i} \left[f(a(U^\D_j))-f(a(U^\D_{j-1}))\right],
		 \end{aligned}
	\end{equation}
	\end{small}%
	where (a) follows from $f(a(U^\D_0))=f(0)=0$ and $y^*_{\D_{n+1}}=0$; (b) follows from $\{\D_1, \ldots, \D_n\} = [n]$.
    Note that, for any $i\in [n]$ and $k\in [t]$ satisfying $i\in N_k$,
	\begin{equation}\label{comp2}
			\begin{aligned}
				&\sum_{j:i_{k-1}+1\leq\D_j\leq i}\left[f(a(U^\D_j))-f(a(U^\D_{j-1}))\right]
				\stackrel{(a)}{=} \sum_{j:i_{k-1}+1\leq\D_j\leq i}
				\left(\sum_{\ell\in U^\D_j}\eta^\D_{\ell}-
				\sum_{\ell\in U^\D_{j-1}} \eta^\D_{\ell} \right)\\
				&\qquad \stackrel{(b)}{=} \eta^\D_{i_{k-1}+1} + \sum_{j:i_{k-1}+2\leq\D_j\leq i}
				(\eta^\D_{\D_j}-\eta^\D_{\D_j-1})
				=\eta^\D_{i},
			\end{aligned}
	\end{equation}
	where (a) follows from $\D\in \Delta$ and \cref{lem:valideta}; (b) follows from \cref{lem::Uj-Uj-1}.
	Combining \eqref{comp1} and \eqref{comp2}, we obtain
	\begin{equation*}
		\sum_{S\in \X} \lambda^*_S f(a(S))= \sum_{k=1}^t
		\sum_{i=i_{k-1}+1}^{i_k} x^*_i \eta^\D_{i}
		= \sum_{i=1}^n\eta^\D_ix^*_i.
	\end{equation*}
	As a result, \eqref{eq:obj-equal} holds and the statement follows.
\end{proof}

Three remarks on \cref{lem:solveseparation} are in order.
First, \cref{lem:solveseparation} implies that given a point $(w^*,x^*) \in \R \times [0, 1]^n$ with $\sum_{i \in N_k} x_i^* \leq 1$ for all $k \in [t]$, either the \LEPI $w \geq \sum_{i=1}^n \eta_{\D_i}^\D x_{\D_i} $ (where $(0, \eta^\D)$ is optimal to \eqref{prob:equi-separation}) cuts off $(w^*,x^*)$ or none exists. 
This indicates the strength of \LEPIs in terms of characterizing the convex hull of $X_0$.
\begin{Corollary}\label{coro:charaX0}
	All \LEPIs $w\geq \sum_{i=1}^n\eta^{\D}_{\D_i}x_{\D_i}$, $\D \in \Delta$, along with the trivial inequalities $\sum_{i\in N_k}x_i\leq 1$, $k\in [t]$, and $0\leq x_i\leq 1$, $i\in [n]$, provide a complete linear description of  $\conv(X_0)$.
\end{Corollary}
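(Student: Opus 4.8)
The plan is to show that the polyhedron
\[
P := \Bigl\{ (w,x) \in \R \times \R^n \,:\, w \geq \textstyle\sum_{i=1}^n \eta^\D_{\D_i} x_{\D_i}\ \ \forall\, \D \in \Delta,\ \ \textstyle\sum_{i \in N_k} x_i \leq 1\ \ \forall\, k \in [t],\ \ 0 \leq x_i \leq 1\ \ \forall\, i \in [n] \Bigr\}
\]
coincides with $\conv(X_0)$, by proving the two inclusions separately. The inclusion $\conv(X_0) \subseteq P$ is the easy direction: every \LEPI is valid for $X_0$ by \cref{lem:facet} (a facet-defining inequality is in particular valid), and the trivial bound and \GUB inequalities are obviously valid for $X_0$; hence $X_0 \subseteq P$, and since $P$ is convex, $\conv(X_0) \subseteq P$.

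For the reverse inclusion $P \subseteq \conv(X_0)$, I would fix an arbitrary point $(w^*, x^*) \in P$ and deduce that it lies in $\conv(X_0)$. Since $(w^*,x^*)$ satisfies the trivial inequalities, $x^* \in [0,1]^n$ and $\sum_{i \in N_k} x_i^* \leq 1$ for all $k \in [t]$, so \cref{lem:solveseparation} applies: taking $y^*$ as in \eqref{def:y} and a partial ascending permutation $\D \in \Delta$ with $y^*_{\D_1} \geq \cdots \geq y^*_{\D_n}$, the pair $(0, \eta^\D)$ is optimal for the separation problem \eqref{prob:equi-separation}, equivalently \eqref{problem:separation}, whose optimal value is therefore $\sum_{i=1}^n \eta^\D_i x_i^*$. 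Because $(w^*, x^*) \in P$ in particular satisfies the \LEPI attached to this same $\D$, we get $w^* \geq \sum_{i=1}^n \eta^\D_{\D_i} x^*_{\D_i} = \sum_{i=1}^n \eta^\D_i x_i^*$, i.e., $w^*$ is at least the optimal value of \eqref{problem:separation}. Invoking the characterization recorded immediately after \eqref{problem:separation} — namely that $w^*$ being at least this optimal value certifies membership in $\conv(X_0)$ — we conclude $(w^*, x^*) \in \conv(X_0)$. Combining the two inclusions gives $P = \conv(X_0)$.

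There is essentially no hard step remaining once \cref{lem:solveseparation} is in hand, so the corollary is really a direct corollary; the one point that deserves a line of justification is the final implication, which rests on LP duality for the separation problem. Concretely, $\min\{ w : (w,x^*) \in \conv(X_0)\}$ equals the optimal value of \eqref{problem:separation} precisely because $x^*$ lies in the projection of $\conv(X_0)$ onto the $x$-space, which is exactly the integral \GUB polytope $\{x \in [0,1]^n : \sum_{i \in N_k} x_i \leq 1\}$; writing such an $x^*$ as a convex combination of incidence vectors $\chi^S$ with $S \in \X$ shows the separation \LP is bounded (it is feasible by \cref{lem:validity}), so an optimal solution exists and the cited characterization is legitimate. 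This is already argued in the paragraph introducing \eqref{problem:separation}, so no genuinely new obstacle arises here.
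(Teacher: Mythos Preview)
Your proposal is correct and follows essentially the same route as the paper: the corollary is stated immediately after \cref{lem:solveseparation}, and the paper's one-sentence justification is precisely that for any $(w^*,x^*)$ satisfying the trivial inequalities, the optimal solution of \eqref{prob:equi-separation} is the coefficient vector of a \LEPI, so either that \LEPI separates $(w^*,x^*)$ or no valid inequality does. Your write-up is a bit more explicit in spelling out the two inclusions and in justifying, via LP duality and the integrality of the \GUB polytope, why $w^*$ meeting the optimal value of \eqref{problem:separation} certifies membership in $\conv(X_0)$; this extra care is welcome but does not constitute a different argument.
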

Second, \cref{lem:solveseparation} enables us to derive an efficient polynomial-time algorithm for the separation of the \LEPIs (or equivalently, $\conv(X_0)$).
Indeed, 
given a point $(w^*,x^*) \in \R \times [0, 1]^n$ with $\sum_{i \in N_k} x_i^* \leq 1$ for all $k \in [t]$, we can use the following exact procedure to solve the separation problem.
\begin{itemize}
	\item [(i)] Compute $y^*$ in \eqref{def:y} with the time complexity of $\CO(n)$.
	\item [(ii)] Determine the permutation $\D\in \Delta$ satisfying \eqref{permY} by first sorting $\{y_i^*\}_{i \in [n]}$ with the time complexity of $\CO(n \log n)$ and then transforming the obtained permutation $\D'$ into a permutation $\D\in \Delta$ satisfying \eqref{permY} with the time complexity  of  $\CO(n)$; see \cref{remark4}.
	\item [(iii)] Compute the coefficients $\{\eta_{\D_j}^\D\}_{j\in [n]}$ of \eqref{LEPI} corresponding to $\D$ ($\in \Delta$) with the time complexity of  $\CO(n)$; see \cref{thm:coecompute}.
	\item [(iv)] Check whether or not \eqref{LEPI} is violated by $(w^*, x^*)$ with the time complexity of $\CO(n)$.
\end{itemize}
\begin{Corollary}\label{coro:separationX0}
	The separation problem of the \LEPIs~{\rm(}or $\conv(X_0)${\rm)} can be solved in $\CO(n\log n)$ time.
\end{Corollary}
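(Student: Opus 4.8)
The plan is to verify that the four-step procedure displayed immediately before the statement is a correct separation algorithm and that it runs in $\CO(n\log n)$ time; essentially all of the work has already been done in \cref{lem:solveseparation,thm:coecompute}, so the proof is mostly a matter of assembling these pieces and bookkeeping the running time. I would begin by noting that a query point $(w^*,x^*)$ can be checked against the trivial inequalities $0\le x_i\le 1$ ($i\in[n]$) and $\sum_{i\in N_k}x_i\le 1$ ($k\in[t]$) in $\CO(n)$ time; if one of these is violated we output it, and otherwise we may assume $(w^*,x^*)\in\R\times[0,1]^n$ with $\sum_{i\in N_k}x_i^*\le 1$ for all $k\in[t]$, which is precisely the hypothesis under which \cref{lem:solveseparation} applies.

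For correctness, \cref{lem:non-trivialfacet} allows us to restrict attention to non-trivial facet-defining inequalities written as $w\ge\pi_0+\sum_{i=1}^n\pi_ix_i$, so that deciding whether $(w^*,x^*)\in\conv(X_0)$ amounts to solving \eqref{prob:equi-separation} and comparing its optimal value with $w^*$. By \cref{lem:solveseparation}, the pair $(0,\eta^\D)$ --- where $y^*$ is as in \eqref{def:y}, $\D\in\Delta$ is a partial ascending permutation satisfying \eqref{permY}, and $\eta^\D$ is given by \eqref{coe:valid} --- is optimal for \eqref{prob:equi-separation}, and (by the strong duality argument in its proof) the optimal value equals $\sum_{i=1}^n\eta^\D_{\D_i}x^*_{\D_i}$. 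Hence if $w^*\ge\sum_{i=1}^n\eta^\D_{\D_i}x^*_{\D_i}$ then $(w^*,x^*)\in\conv(X_0)$ and no violated inequality exists, while otherwise the \LEPI $w\ge\sum_{i=1}^n\eta^\D_{\D_i}x_{\D_i}$ separates $(w^*,x^*)$; this is exactly what step (iv) reports.

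It then remains to bound the running time of steps (i)--(iv): step (i) computes $y^*$ via \eqref{def:y} in $\CO(n)$; step (ii) sorts $\{y_i^*\}_{i\in[n]}$ in $\CO(n\log n)$ and converts the resulting permutation into one in $\Delta$ still satisfying \eqref{permY} in $\CO(n)$ (\cref{remark4}); step (iii) computes $\{\eta^\D_{\D_j}\}_{j\in[n]}$ in $\CO(n)$ by \cref{thm:coecompute}; and step (iv) evaluates $\sum_{i=1}^n\eta^\D_{\D_i}x^*_{\D_i}$ and compares it with $w^*$ in $\CO(n)$. Summing gives $\CO(n\log n)$. There is no genuine obstacle here; the only point deserving care is that the sorting step dominates --- everything else, including the conversion via \cref{remark4} and the linear-time assembly of the lifting coefficients, is $\CO(n)$ --- so the bound $\CO(n\log n)$ is exactly what the procedure achieves.
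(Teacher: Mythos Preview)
Your proposal is correct and follows exactly the approach of the paper: the paper derives the corollary directly from the four-step procedure displayed just before it, verifying correctness via \cref{lem:solveseparation} and tallying the $\CO(n)$ cost of steps (i), (iii), (iv) (with \cref{remark4} and \cref{thm:coecompute}) against the dominating $\CO(n\log n)$ sort in step (ii). Your write-up is slightly more explicit about the role of \cref{lem:non-trivialfacet} and the reduction to \eqref{prob:equi-separation}, but the argument is identical.
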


\begin{Remark}
	When $|N_k|=1$ holds for all $k\in [t]$, we have $y^*_i=x^*_i$ for all $i\in [n]$ in step (i).
	Then, step (ii) sorts the values $\{x_i^*\}_{i\in [n]}$, and step (iii) computes the coefficients of \eqref{LEPI} that is equivalent to \eqref{EPI}.
	In this case,  our exact separation algorithm reduces to the separation of the \EPIs and is indeed equivalent to the well-known greedy algorithm proposed by \citet*{Edmonds2003}. 
\end{Remark}

Third, due to the polynomial equivalence of optimization and separation \citep{Grotschel2012}, \cref{coro:separationX0} implies that optimizing a linear function over $X_0$ can be conducted in polynomial time.
\begin{Corollary}\label{coro:optimize}
	The optimization problem 
	\begin{equation}\label{opt}\tag{OPT}
	\min_{w, \, x}\left\{d w+c^\top x\,:\, ~w \geq f(a^\top x ),~ \sum_{i\in N_k}x_i \leq 1,~\forall~ k\in [t],~(w,x)\in \R \times \{0,1\}^n\right\}
	\end{equation}
	 can be solved in polynomial time, where $d\in \R$, $c\in \R^n$, $a\in \R^n_+$, and $\{N_k\}_{k\in [t]}$ is a partition of $[n]$.
\end{Corollary}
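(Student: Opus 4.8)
The plan is to recast \eqref{opt} as a linear program over $\conv(X_0)$ and then invoke the polynomial-time equivalence of separation and optimization, using the exact separation oracle from \cref{coro:separationX0}.

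First, since the objective $dw+c^\top x$ is linear and $X_0$ is a mixed-binary set, minimizing it over $X_0$ is the same as minimizing it over $\conv(X_0)$; so it suffices to solve $\min\{dw+c^\top x : (w,x)\in\conv(X_0)\}$. I would dispose of the degenerate cases first: the $x$-variables in $X_0$ lie in the polytope $[0,1]^n$, so the recession cone of $\conv(X_0)$ is exactly the ray $\{(t,\boldsymbol{0}) : t\ge 0\}$ (arising from $w\ge f(a^\top x)$). Consequently the LP is unbounded below precisely when $d<0$; it reduces to $\min\{c^\top x : \sum_{i\in N_k}x_i\le 1,\ k\in[t],\ x\in\{0,1\}^n\}$ (solvable in linear time) when $d=0$; and when $d>0$ we may scale to $d=1$, in which case an optimal solution is attained at a vertex of $\conv(X_0)$, i.e.\ a point $(f(a(S)),\chi^S)$ with $S\in\X$.

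Second, I would supply the linear description together with the oracle. By \cref{coro:charaX0}, $\conv(X_0)$ is exactly the set of $(w,x)$ satisfying all \LEPIs $w\ge\sum_{i=1}^n\eta^{\D}_{\D_i}x_{\D_i}$ with $\D\in\Delta$, together with $\sum_{i\in N_k}x_i\le 1$ for $k\in[t]$ and $0\le x_i\le 1$ for $i\in[n]$; and by \cref{coro:separationX0}, this exponential-size system can be separated exactly in $\CO(n\log n)$ time. The polyhedron is moreover \emph{well-described} in the sense of \citet{Grotschel2012}: the trivial inequalities have $\CO(1)$ encoding length, and every vertex has $x\in\{0,1\}^n$ and $w$ equal to one of the finitely many numbers $f(a(S))$, so under the standing assumption that $f$ can be evaluated in polynomial time at the relevant arguments (which holds in all our applications), both the vertex complexity and the facet complexity of $\conv(X_0)$ are bounded by a polynomial in $n$ and the input size.

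Finally, I would apply the polynomial equivalence of separation and optimization for well-described polyhedra \citep{Grotschel2012} --- that is, run the ellipsoid method driven by the oracle of \cref{coro:separationX0} --- to conclude that $\min\{dw+c^\top x : (w,x)\in\conv(X_0)\}$ can be solved, or declared unbounded, in time polynomial in $n$ and the encoding length of $d$, $c$, $a$; an optimal vertex solution is then an optimal solution of \eqref{opt}. I expect the only delicate points to be structural rather than deep: correctly certifying the $d<0$ unbounded case via the identified recession ray, and pinning down the model of computation for the concave function $f$ (and the induced bound on the encoding length of the coefficients $\eta^{\D}$). Once these are fixed, the statement is an immediate consequence of \cref{coro:separationX0}.
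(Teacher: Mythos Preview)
Your proposal is correct and follows essentially the same route as the paper: the paper derives \cref{coro:optimize} directly from \cref{coro:separationX0} by invoking the polynomial equivalence of separation and optimization \citep{Grotschel2012}, and you do the same, only with more care about the degenerate cases $d\le 0$ and the well-describedness of $\conv(X_0)$. (The paper additionally supplies a strongly polynomial $\mathcal{O}(n^3)$ combinatorial algorithm in Appendix~\ref{appendixB}, but that is a separate, constructive argument rather than the proof of the corollary itself.)
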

\noindent In Appendix \ref{appendixB}, we further provide a strongly polynomial-time algorithm for solving problem \eqref{opt}.

\section{Extensions to the general case conv$(X)$}\label{section6}

In this section, we extend the previous polyhedral results of $\conv(X_0)$ onto the general case $\conv(X)$, where we recall that
\begin{equation*}
	X=\left\{(w,x)\in \R \times\{0,1\}^n\,:\, w \geq f(a^\top x)+b^\top x,
	~\sum_{i \in N_k}x_i \leq 1, ~\forall~ k\in[t]\right\}.
\end{equation*}
To proceed, we note that there exists a one-to-one correspondence between the feasible points of $X_0$ and $X$.
\begin{Lemma}\label{lem:point}
	Let $(w,x)\in\R\times \R^n$.
	Then $(w,x)\in X$ if and only if $(w-b^\top x, x)\in X_0$.
\end{Lemma}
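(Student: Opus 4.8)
The plan is to prove the equivalence by a direct unwinding of the definitions of $X$ and $X_0$, exploiting that the stated bijection is simply an affine shift in the $w$-coordinate whose shift amount, $b^\top x$, is a constant once $x$ is fixed. First I would record the elementary observation that the map $\varphi:(w,x)\mapsto (w-b^\top x,\,x)$ is a bijection of $\R\times\R^n$ onto itself, with inverse $(w',x)\mapsto(w'+b^\top x,\,x)$, and that it leaves the $x$-coordinate unchanged. Consequently, for any $(w,x)$, the point $\varphi(w,x)$ satisfies the integrality requirement $x\in\{0,1\}^n$ and every \GUB constraint $\sum_{i\in N_k}x_i\le 1$ ($k\in[t]$) if and only if $(w,x)$ does, since all of these constraints involve only $x$.

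Second, I would compare the remaining (nonlinear epigraph) constraint on each side. By definition, the epigraph constraint for $X$ reads $w\ge f(a^\top x)+b^\top x$; subtracting $b^\top x$ from both sides, this is equivalent to $w-b^\top x\ge f(a^\top x)$, which is exactly the epigraph constraint defining $X_0$, evaluated at $\varphi(w,x)=(w-b^\top x,x)$ (note that the argument $a^\top x$ of $f$ is untouched, as $\varphi$ fixes $x$). Combining this with the first observation yields $(w,x)\in X$ if and only if $\varphi(w,x)=(w-b^\top x,x)\in X_0$, which is the claim.

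There is essentially no obstacle here; the lemma is a bookkeeping statement whose only subtlety is recognizing that the $w$-shift by $b^\top x$ does not interact with the constraints that depend solely on $x$, so that the correspondence is a genuine bijection and the two feasibility conditions match term by term.
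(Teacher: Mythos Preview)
Your proof is correct and follows the same approach as the paper, which simply notes that the statement follows from the definitions of $X$ and $X_0$. Your version is more detailed, explicitly spelling out that the $x$-only constraints (integrality and \GUB) are unaffected by the $w$-shift and that subtracting $b^\top x$ from both sides of the epigraph inequality in $X$ yields the epigraph inequality in $X_0$; this is exactly the content behind the paper's one-line proof.
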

\begin{proof}
	This statement follows from the definition of $X$ and $X_0$. 
\end{proof}
\begin{Lemma}\label{lem:independent}
	Let $\{(w^j,x^j)\}_{j\in [n+1]}\subseteq \R\times \R^n$.
	Then $\{(w^j,x^j)\}_{j\in [n+1]}$ are affinely independent if and only if 
	$\{(w^j-b^\top x^j, x^j)\}_{j\in [n+1]}$ are affinely independent.
\end{Lemma}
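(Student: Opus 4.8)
The plan is to exhibit an explicit invertible affine map between $\R \times \R^n$ and itself that carries the point set $\{(w^j, x^j)\}_{j \in [n+1]}$ to $\{(w^j - b^\top x^j, x^j)\}_{j \in [n+1]}$, and then to invoke the standard fact that affine independence is preserved by invertible affine transformations. Concretely, define $T : \R \times \R^n \to \R \times \R^n$ by $T(w, x) = (w - b^\top x, x)$. This is an affine (indeed linear) map, and it has the affine inverse $T^{-1}(w, x) = (w + b^\top x, x)$, so $T$ is a bijection. Since $T$ is invertible and affine, a finite collection of points is affinely independent if and only if its image under $T$ is affinely independent; applying this to $\{(w^j, x^j)\}_{j \in [n+1]}$ gives exactly the claimed equivalence.

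First I would recall the definition: points $p^1, \dots, p^{n+1} \in \R^{n+1}$ are affinely independent iff the $n$ vectors $p^2 - p^1, \dots, p^{n+1} - p^1$ are linearly independent, or equivalently iff the only solution of $\sum_{j} \mu_j p^j = 0$ with $\sum_j \mu_j = 0$ is $\mu = 0$. Then I would observe that for the linear map $T$ we have $T(p^j) - T(p^1) = T(p^j - p^1)$ for each $j$, so the difference vectors transform by the fixed invertible linear map $T$; linear independence of a set of vectors is preserved under an invertible linear map, which yields both directions at once. Alternatively, and perhaps most cleanly for insertion into the paper, one can argue via the characterization using affine combinations: if $\sum_j \mu_j (w^j - b^\top x^j, x^j) = 0$ with $\sum_j \mu_j = 0$, then the last $n$ coordinates give $\sum_j \mu_j x^j = 0$, whence $\sum_j \mu_j b^\top x^j = 0$, and substituting back into the first coordinate gives $\sum_j \mu_j w^j = 0$; thus $\sum_j \mu_j (w^j, x^j) = 0$, and the converse substitution is identical. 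Hence the two systems have the same solution set, proving the equivalence.

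There is no real obstacle here: the lemma is essentially the statement that a unimodular shear of the $w$-coordinate preserves affine independence, and the only thing to be careful about is bookkeeping the constraint $\sum_j \mu_j = 0$ alongside the linear relation. I would write the proof in the affine-combination form above, since it is self-contained and avoids appealing to any external linear-algebra fact beyond the definition of affine independence. The whole argument is two or three lines once the map $T$ is named.
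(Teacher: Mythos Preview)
Your proposal is correct, and the affine-combination argument you say you would actually write is essentially identical to the paper's own proof: the paper also reduces each affine-independence statement to the corresponding linear system $\sum_j \alpha_j = 0$, $\sum_j \alpha_j x^j = 0$, $\sum_j \alpha_j w^j = 0$ (respectively $\sum_j \alpha_j (w^j - b^\top x^j) = 0$), and then observes that under the constraint $\sum_j \alpha_j x^j = 0$ the two final equations coincide. Your framing via the invertible shear $T(w,x)=(w-b^\top x,x)$ is a nice conceptual gloss, but the written argument matches the paper's.
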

\begin{proof}
	It suffices to show that the following statements are equivalent:
	\begin{itemize}
		\item [(i)] $\{(w^j,x^j)\}_{j\in [n+1]}$ are affinely independent.
		\item [(ii)] $\alpha=\boldsymbol{0}$ is the unique feasible solution to the following linear system:
		\begin{equation}\label{tmpeqx1}
		\sum_{j=1}^{n+1} \alpha_j=0, ~\sum_{j=1}^{n+1} \alpha_jx^j_i=0, ~\forall~ i\in[n], ~\text{and}~ \sum_{j=1}^{n+1} \alpha_jw^j =0.
	    \end{equation}
		\item [(iii)] $\alpha=\boldsymbol{0}$ is the unique feasible solution to the following linear system:
		\begin{equation}\label{tmpeqx2}
			\sum_{j=1}^{n+1} \alpha_j=0, ~\sum_{j=1}^{n+1} \alpha_jx^j_i=0, ~\forall~ i\in[n], ~\text{and}~ \sum_{j=1}^{n+1} \alpha_j(w^j-b^\top x^j) =0.
		\end{equation}
		\item [(iv)] $\{(w^j-b^\top x^j, x^j)\}_{j\in [n+1]}$ are affinely independent.
	\end{itemize}
	The equivalence of (i) and (ii) (respectively, (iii) and (iv)) is trivially satisfied.
	For (ii) $\Leftrightarrow$ (iii), observe that for solutions satisfying the first two constraints in the linear systems \eqref{tmpeqx1} and \eqref{tmpeqx2}, it follows 
	$\sum_{j=1}^{n+1} \alpha_j(w^j- b^\top x^j)= \sum_{j=1}^{n+1} \alpha_j w^j-
    \sum_{i=1}^n b_i \sum_{j=1}^{n+1} \alpha_j x^j_i= \sum_{j=1}^{n+1}\alpha_j w^j$,
    and thus the third constraints in \eqref{tmpeqx1} and \eqref{tmpeqx2} are equivalent. 
\end{proof}

The following theorem further establishes a one-to-one correspondence between the facet-defining inequalities of $\conv(X_0)$ and $\conv(X)$.
\begin{Theorem}\label{thm:correspondfacet}
	Inequality 
	\begin{equation}\label{equivalent-1}
	\pi_0+\pi^\top x \leq \alpha w
    \end{equation}
    is valid (respectively, facet-defining) for $\conv(X_0)$ if and only if
    \begin{equation}\label{equivalent-2}
    	\pi_0+\pi^\top x\leq \alpha(w-b^\top x)
    \end{equation}  
	is valid (respectively, facet-defining) for $\conv(X)$.
\end{Theorem}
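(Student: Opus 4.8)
The plan is to run everything through the affine bijection $T\colon\R\times\R^n\to\R\times\R^n$ defined by $T(w,x)=(w-b^\top x,\,x)$, whose inverse is $T^{-1}(w,x)=(w+b^\top x,\,x)$. By \cref{lem:point}, $(w,x)\in X$ iff $T(w,x)\in X_0$, i.e. $T(X)=X_0$; since $T$ is affine and invertible, it commutes with taking convex hulls, so $T(\conv(X))=\conv(X_0)$ and, in particular, $(w,x)\in\conv(X)$ iff $T(w,x)\in\conv(X_0)$. The key bookkeeping observation is that the affine constraint attached to \eqref{equivalent-1} pulls back under $T$ to the one attached to \eqref{equivalent-2}: writing $(w',x')=T(w,x)=(w-b^\top x,x)$, we have $\pi_0+\pi^\top x'\le\alpha w'$ (respectively, with equality) if and only if $\pi_0+\pi^\top x\le\alpha(w-b^\top x)$ (respectively, with equality).

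For validity, I would use that a linear inequality is valid for $\conv(X_0)$ iff it is valid for $X_0$, and likewise for $X$. Then \eqref{equivalent-1} is valid for $\conv(X_0)$ iff $\pi_0+\pi^\top x\le\alpha w$ holds at every point of $X_0$; by \cref{lem:point}, every point of $X_0$ is $T(w,x)=(w-b^\top x,x)$ for a unique $(w,x)\in X$, and evaluating \eqref{equivalent-1} there gives exactly $\pi_0+\pi^\top x\le\alpha(w-b^\top x)$. Hence \eqref{equivalent-1} is valid for $\conv(X_0)$ iff \eqref{equivalent-2} holds at every point of $X$, i.e. iff \eqref{equivalent-2} is valid for $\conv(X)$.

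For the facet-defining part I would first record that $\conv(X_0)$ is full-dimensional in $\R^{n+1}$: the $n+2$ points $(0,\boldsymbol 0)$, $(1,\boldsymbol 0)$, and $\{(f(a_i),\boldsymbol e_i)\}_{i\in[n]}$ all lie in $X_0$ (using $f(0)=0$ and $a\in\R^n_+$) and are affinely independent; consequently $\conv(X)=T^{-1}(\conv(X_0))$ is full-dimensional too, and a valid inequality is facet-defining for either polyhedron iff it is satisfied with equality by $n+1$ affinely independent points of that polyhedron. So, assuming \eqref{equivalent-1} is facet-defining for $\conv(X_0)$, pick affinely independent $\{(\hat w^j,\hat x^j)\}_{j\in[n+1]}\subseteq\conv(X_0)$ with $\pi_0+\pi^\top\hat x^j=\alpha\hat w^j$; set $(w^j,x^j):=T^{-1}(\hat w^j,\hat x^j)=(\hat w^j+b^\top\hat x^j,\hat x^j)\in\conv(X)$. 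Since $w^j-b^\top x^j=\hat w^j$, \cref{lem:independent} keeps these points affinely independent, and $\pi_0+\pi^\top x^j=\pi_0+\pi^\top\hat x^j=\alpha\hat w^j=\alpha(w^j-b^\top x^j)$, so \eqref{equivalent-2} is tight at $n+1$ affinely independent points of $\conv(X)$; combined with the validity already proven, \eqref{equivalent-2} is facet-defining for $\conv(X)$. The reverse implication is the same argument with $X_0$ and $X$ (and $T$, $T^{-1}$) interchanged.

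I do not expect a genuine obstacle here: the substantive content is entirely in \cref{lem:point} and \cref{lem:independent}, and everything else is the routine transfer of ``valid'' and ``facet-defining'' under an affine bijection. The only things needing care are (i) lifting the point-level statement $T(X)=X_0$ to $T(\conv(X))=\conv(X_0)$, which is immediate from affineness, and (ii) the full-dimensionality remark, which is what licenses the ``$n+1$ affinely independent tight points'' characterization of facets for both $\conv(X_0)$ and $\conv(X)$.
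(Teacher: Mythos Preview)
Your proposal is correct and follows essentially the same route as the paper: both arguments transport validity and tightness through the bijection $(w,x)\leftrightarrow(w-b^\top x,x)$ using \cref{lem:point} for membership and \cref{lem:independent} for affine independence. Your version is slightly more careful in that you make the full-dimensionality of $\conv(X_0)$ (and hence of $\conv(X)$) explicit, which is what justifies the ``$n{+}1$ affinely independent tight points'' characterization of facets; the paper's proof uses this implicitly.
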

\begin{proof}
    We shall prove the ``if'' part and the proof of the ``only if'' part is similar.
    Assume that inequality \eqref{equivalent-2} is valid for $\conv(X)$.
    Let $(w^*,x^*)$ be an arbitrary point in $X_0$.
    Then, by \cref{lem:point}, it follows $(w^*+b^\top x^*, x^*)\in X$.
    Substituting $(w^*+b^\top x^*, x^*)$ into \eqref{equivalent-2}, we obtain  $\pi_0+\pi^\top x^*\leq \alpha w^*$.
    Since $(w^*,x^*)$ is chosen arbitrarily from $X_0$, it follows that \eqref{equivalent-1} is a valid inequality for $\conv(X_0)$.
    If \eqref{equivalent-2} defines a facet of $\conv(X)$,
    then there exist $n+1$ affinely independent points $\{(w^j,x^j)\}_{j\in [n+1]}\in X$ such that 
    $\pi_0+\pi^\top x^j= \alpha (w^j- b^\top x^j)$ holds for all $j \in [n+1]$.
    From \cref{lem:point,lem:independent}, $\{(w^j-b^\top x^j,x^j)\}_{j \in [n+1]} \in X_0$ are $n+1$ affinely independent points satisfying \eqref{equivalent-1} at equality,
    showing that inequality \eqref{equivalent-1} is facet-defining for $\conv(X_0)$. 
\end{proof}

Using  \cref{thm:correspondfacet}, we can extend the \EPIs and  \LEPIs of $\conv(X_0)$ onto the general case $\conv(X)$. 
Specifically, for any permutation $\D$ of $[n]$, by \cref{thm:correspondfacet} and the validity of \eqref{EPI} and \eqref{LEPI}, we obtain the \EPI-like and \LEPI-like (valid) inequalities for $\conv(X)$:  
\begin{align}
	&w\geq \sum_{j=1}^n(f(a(\D(j)))-f(a(\D(j-1)))+b_{\D_j})x_{\D_j},\label{EPI-like}\tag{EPI'}\\
	& w\geq \sum_{j=1}^n(\eta^\D_{\D_j} +b_{\D_j})x_{\D_j}.\label{LEPI-like}\tag{LEPI'}
\end{align}
Moreover, combining \cref{lem:facet}, \cref{thm:LEPI-EPI}, \cref{coro:charaX0}, and \cref{thm:correspondfacet}, we obtain that
\begin{Corollary}\label{tmpcoro}
	(i) For any permutation $\D$ of $[n]$,
 	inequality \eqref{LEPI-like} is facet-defining for $\conv(X)$ and stronger than inequality \eqref{EPI-like};
    (ii) All \LEPI's together with the trivial inequalities $0\leq x_i\leq 1$ for $i\in [n]$ and $\sum_{i\in N_k}x_i\leq 1$ for $k\in [t]$ provide a complete linear description of $\conv(X)$.
\end{Corollary}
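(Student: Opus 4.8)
The plan is to obtain both parts as immediate consequences of \cref{thm:correspondfacet}, combined with the properties of \eqref{LEPI} already established for $\conv(X_0)$.

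For part~(i), recall from \cref{lem:facet} that \eqref{LEPI}, i.e.\ $w\geq\sum_{i=1}^n\eta^\D_{\D_i}x_{\D_i}$, is facet-defining for $\conv(X_0)$; in the notation of \cref{thm:correspondfacet} this is inequality \eqref{equivalent-1} with $\alpha=1$, $\pi_0=0$, and $\pi_i=\eta^\D_i$. The ``if'' direction of \cref{thm:correspondfacet} then shows that \eqref{equivalent-2}, which here reads $\sum_{i=1}^n\eta^\D_ix_i\leq w-b^\top x$, is facet-defining for $\conv(X)$; rearranging this inequality yields exactly \eqref{LEPI-like}. For the comparison with \eqref{EPI-like}, I would invoke \cref{thm:LEPI-EPI}: since $\eta^\D_{\D_j}\geq\rho_{\D_j}=f(a(\D(j)))-f(a(\D(j-1)))$ for every $j\in[n]$, we get $\eta^\D_{\D_j}+b_{\D_j}\geq\rho_{\D_j}+b_{\D_j}$, so over $\{x\geq\boldsymbol{0}\}$ the right-hand side of \eqref{LEPI-like} dominates that of \eqref{EPI-like}; hence \eqref{LEPI-like} is at least as strong as \eqref{EPI-like}.

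For part~(ii), I would work with the affine bijection $\phi\colon(w,x)\mapsto(w-b^\top x,\,x)$. By \cref{lem:point}, $\phi$ maps $X$ onto $X_0$; being affine and invertible it therefore maps $\conv(X)$ onto $\conv(X_0)$, and by \cref{lem:independent} it preserves affine independence, so it carries the facial structure (and hence the halfspace representation) over intact. By \cref{coro:charaX0}, $\conv(X_0)$ is precisely the set of $(w',x)$ satisfying all \LEPIs $w'\geq\sum_{i=1}^n\eta^\D_{\D_i}x_{\D_i}$ for $\D\in\Delta$, together with $\sum_{i\in N_k}x_i\leq1$ for $k\in[t]$ and $0\leq x_i\leq1$ for $i\in[n]$. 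Substituting $w'=w-b^\top x$ turns each \LEPI into the corresponding inequality \eqref{LEPI-like}, while the \GUB and bound constraints are unaffected since they do not involve $w$. Therefore $(w,x)\in\conv(X)$ if and only if $(w,x)$ satisfies all these inequalities \eqref{LEPI-like} together with the trivial inequalities, which is the asserted description. (One may also argue facet by facet: by \cref{lem:non-trivialfacet} and \cref{coro:charaX0}, every non-trivial facet of $\conv(X_0)$ is, up to scaling, an \LEPI with $\alpha=1$, which \cref{thm:correspondfacet} maps to an inequality of the form \eqref{LEPI-like}, whereas the trivial facets have $\alpha=0$ and are left unchanged by the transformation of \cref{thm:correspondfacet}.)

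I do not anticipate a genuine obstacle here: the result is essentially a bookkeeping combination of \cref{lem:facet}, \cref{thm:LEPI-EPI}, \cref{coro:charaX0}, and \cref{thm:correspondfacet}. The only point deserving a little care is that \emph{completeness} of the linear description must transfer, not merely validity of the individual inequalities; this is precisely why in part~(ii) I would route the argument through the invertible affine map $\phi$, so that the whole polyhedron (not just each inequality separately) is mapped, rather than relying solely on the inequality-by-inequality correspondence.
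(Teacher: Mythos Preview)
Your proposal is correct and follows essentially the same approach as the paper, which simply states that the corollary is obtained by combining \cref{lem:facet}, \cref{thm:LEPI-EPI}, \cref{coro:charaX0}, and \cref{thm:correspondfacet}. Your added care in part~(ii)---routing the completeness argument through the invertible affine map $\phi$ rather than inequality-by-inequality---is a welcome explicit justification of a step the paper leaves implicit.
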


Let $(w^*,x^*) \in \R \times [0, 1]^n$ be such that $\sum_{i \in N_k} x_i^* \leq 1$ holds for all $k \in [t]$. 
Using \cref{thm:correspondfacet}, we can derive that  $(w^*,x^*)\in \conv(X)$ holds if and only if $(w^*-b^\top x^*, x^*)\in \conv(X_0)$ holds. 
As a result, separating $(w^*,x^*)$ from $\conv(X)$ (respectively, determining whether there exists an inequality \eqref{LEPI-like} cutting off $(w^*,x^*)$) is equivalent to separating  $(w^*-b^\top x^*, x^*)$ from $\conv(X_0)$  (respectively, determining whether there exists an inequality \eqref{LEPI} cutting off $(w^*-b^\top x^*, x^*)$).
Therefore, it follows from \cref{coro:separationX0} that
\begin{Corollary}
	The separation problem of the \LEPI's (or $\conv(X)$) can be solved in $\CO(n\log n)$ time.
\end{Corollary}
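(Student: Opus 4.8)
The plan is to reduce separation over $\conv(X)$ to separation over $\conv(X_0)$ by means of the affine bijection $(w,x)\mapsto(w-b^\top x,x)$ of \cref{lem:point}, and then to invoke the $\CO(n\log n)$-time separation algorithm for $\conv(X_0)$ established in \cref{coro:separationX0}. The correspondence of valid and facet-defining inequalities under this bijection (\cref{thm:correspondfacet}) guarantees that a cut produced on the $\conv(X_0)$ side translates back into a \LEPI' cut on the $\conv(X)$ side, and \cref{tmpcoro}(i) certifies that the translated inequality is indeed valid for $\conv(X)$.

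First I would handle the trivial inequalities: given a candidate point $(w^*,x^*)$, check in $\CO(n)$ time whether $0\le x_i^*\le 1$ for all $i\in[n]$ and $\sum_{i\in N_k}x_i^*\le 1$ for all $k\in[t]$, returning any violated one. Otherwise we are in the regime $(w^*,x^*)\in\R\times[0,1]^n$ with $\sum_{i\in N_k}x_i^*\le 1$ for all $k\in[t]$, which is exactly the setting required by \cref{coro:separationX0}. Next I compute $b^\top x^*$ in $\CO(n)$ time and form the point $(\hat w,x^*):=(w^*-b^\top x^*,x^*)$. By \cref{thm:correspondfacet}, $(w^*,x^*)\in\conv(X)$ if and only if $(\hat w,x^*)\in\conv(X_0)$, so I run the four-step procedure preceding \cref{coro:separationX0} on $(\hat w,x^*)$ in $\CO(n\log n)$ time. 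If it certifies $(\hat w,x^*)\in\conv(X_0)$, then $(w^*,x^*)\in\conv(X)$ and we are done. If instead it returns a permutation $\D\in\Delta$ whose \LEPI $w\ge\sum_{j=1}^n\eta^\D_{\D_j}x_{\D_j}$ is violated by $(\hat w,x^*)$, i.e.\ $\hat w<\sum_{j=1}^n\eta^\D_{\D_j}x^*_{\D_j}$, then substituting $\hat w=w^*-b^\top x^*$ and rearranging yields $w^*<\sum_{j=1}^n(\eta^\D_{\D_j}+b_{\D_j})x^*_{\D_j}$; that is, the \LEPI' \eqref{LEPI-like} determined by $\D$, which is valid for $\conv(X)$ by \cref{tmpcoro}(i), is violated by $(w^*,x^*)$.

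Summing the costs --- $\CO(n)$ for the trivial checks, $\CO(n)$ to form $(\hat w,x^*)$, and $\CO(n\log n)$ for the $\conv(X_0)$ separation --- gives the claimed $\CO(n\log n)$ bound. There is essentially no obstacle here: the only step requiring a line of care is confirming that the change of variables $w\mapsto w-b^\top x$ preserves the violation status of an inequality, which is exactly the one-line algebraic rearrangement above, and everything else follows immediately from \cref{thm:correspondfacet} and \cref{coro:separationX0}.
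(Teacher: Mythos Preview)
Your proposal is correct and follows essentially the same approach as the paper: reduce separation over $\conv(X)$ to separation over $\conv(X_0)$ via the affine shift $(w,x)\mapsto(w-b^\top x,x)$, invoke \cref{coro:separationX0}, and translate any returned \LEPI into the corresponding \LEPI'. The paper's argument is slightly terser but substantively identical.
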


\section{Computational Results}\label{section7}
In this section, we present computational results to demonstrate the effectiveness of the proposed \LEPIs or \LEPI's in a branch-and-cut framework 
for solving problems \eqref{cGC-MCLP} and \eqref{problemSOCP}.
All computational experiments were performed on a cluster of Intel(R) Xeon(R) Gold 6230R CPU @ 2.10 GHz computers using CPLEX 20.1.0 as the solver.
We implemented \LEPIs or \LEPI's as cutting planes using the callback function of CPLEX.
CPLEX was set to run in a single-threaded mode, with a time limit of 7200 seconds and a relative gap tolerance of 0\%.
Unless otherwise specified, all other CPLEX parameters were set to their default values.

\subsection{Minimum probabilistic covering location problem}\label{section7.1}
We first consider problem \eqref{cGC-MCLP}, where we recall that the substructures $\{\bar{X}^i_0\}_{i \in I}$ in \eqref{substructurebarX0i} can be represented by the classic \EPIs of \citet{Edmonds2003} or the newly proposed \LEPIs (with $\sum_{s\in S}x_{js}\leq 1$ for $j\in J$).
We use a similar procedure as in \citet{Karatas2021} to construct the \cGCMCLP instances. 
Specifically,
\begin{itemize}
	\item The locations of customers and facilities are randomly generated on a two-dimensional 100×100 grid from a multivariate uniform distribution.
	\item The weight $v_i$ is an integer uniformly chosen from $[1,100]$.
	\item The probability $p_{ijs}$ is calculated using a Fermi-type coverage function:
	\begin{equation*}
		p_{ijs}=
		\begin{cases}
			1, & \text{if}~d_{ij}\leq {d^{\text{min}}_s},\\
			\frac{1}{1+10^{[2(d_{ij}-{d^{\text{min}}_s})/({{d^{\text{max}}_s}}-{d^{\text{min}}_s})-1]/\alpha}}, & \text{if}~{d^{\text{min}}_s}<d_{ij}\leq d^{\text{max}}_s,\\
			0, & \text{if}~ d_{ij}>d^{\text{max}}_s,
		\end{cases}
	\end{equation*}
	where $d_{ij}$ is the distance between the locations of customer node $i$ and facility $j$,
	${d^{\text{min}}_s}$ and ${{d^{\text{max}}_s}}$ represent the minimum and 
	the maximum coverage radii for a facility of type $s$, respectively,
	and $\alpha$ is a sensitivity parameter (set to 0.5) describing the tailing character of the coverage function.
	\item  The number of types $|S|$ of facilities is taken from $\{3,4,5,6\}$ and the corresponding service capacity requirement is taken from $\{100, 200, 300, 400\}$.
	For each $|S|\in \{3,4,5,6\}$, the related facility parameters $(c_s, d_s^\text{min}, d_{s}^\text{max})$ are taken from the first $|S|$ elements of 
	$\{ (10, 5, 10),  (20, 6, 14), (30, 7, 18), (40, 8, 22), (50, 9, 26),(60, 10, 30)\}$.
	\item The number of customer nodes and candidate facility locations, represented as $(|I|,|J|)$, is taken from $\{(100, 20), (200, 40), (300, 60), (400, 80), (500, 100)\}$.
\end{itemize}
For each combination of $|S|$ and pair $(|I|,|J|)$, we generate 10 random  instances, yielding a testbed of 200 instances.
\begin{table}
	\caption{Performance comparison of the state-of-the-art approach of \citet{Karatas2021}, the branch-and-cut algorithms based on the classic \EPIs of \citet{Edmonds2003} and the proposed \LEPIs on \cGCMCLP instances.}
	{\resizebox{\textwidth}{!}
				{\begin{tabular}{lllrrrrrrrrrrrrrrr}
					\toprule
					\multicolumn{1}{l}{\multirow{2}{*}{$|S|$}} 
					& \multicolumn{1}{l}{\multirow{2}{*}{$|I|$}} 
					& \multicolumn{1}{l}{\multirow{2}{*}{$|J|$}} 
					& \multicolumn{5}{c}{\EF} & \multicolumn{5}{c}{\EPIF} & \multicolumn{5}{c}{\LEPIF} \\
					\cmidrule(r){4-8}\cmidrule(r){9-13}\cmidrule(r){14-18}
					&&& \multicolumn{1}{c}{Solved} & \multicolumn{1}{c}{Time} & \multicolumn{1}{c}{Nodes} & \multicolumn{1}{c}{EGap(\%)} & \multicolumn{1}{c}{RGap(\%)} & \multicolumn{1}{c}{Solved} & \multicolumn{1}{c}{Time} & \multicolumn{1}{c}{Nodes} & \multicolumn{1}{c}{EGap(\%)} & \multicolumn{1}{c}{RGap(\%)} & \multicolumn{1}{c}{Solved} & \multicolumn{1}{c}{Time} & \multicolumn{1}{c}{Nodes} & \multicolumn{1}{c}{EGap(\%)} & \multicolumn{1}{c}{RGap(\%)} \\ \midrule
					3 & 100 & 20& 10 & 1 & 12 & 0.00 & 2.87 & 10 & 1 & 628 & 0.00 & 38.09 & 10 & 1 & 59 & 0.00 & 10.90 \\
					& 200 & 40& 10 & 23 & 20 & 0.00 & 3.62 & 10 & 3 & 1316 & 0.00 & 38.04 & 10 & 1 & 49 & 0.00 & 7.78 \\
					& 300 & 60& 10 & 226 & 26 & 0.00 & 4.42 & 10 & 11 & 2914 & 0.00 & 39.73 & 10 & 2 & 109 & 0.00 & 9.04 \\
					& 400 & 80& 7 & 3033 & 216 & 0.97 & 17.14 & 10 & 48 & 6822 & 0.00 & 48.05 & 10 & 7 & 698 & 0.00 & 18.95 \\
					& 500 & 100& 1 & 5971 & 28 & 20.71 & 18.53 & 10 & 161 & 12181 & 0.00 & 50.64 & 10 & 19 & 1067 & 0.00 & 23.22 \\
					4 & 100 & 20& 10 & 2 & 8 & 0.00 & 2.66 & 10 & 2 & 881 & 0.00 & 32.89 & 10 & 1 & 15 & 0.00 & 3.68 \\
					& 200 & 40& 10 & 80 & 59 & 0.00 & 9.92 & 10 & 14 & 4107 & 0.00 & 43.10 & 10 & 1 & 63 & 0.00 & 10.05 \\
					& 300 & 60& 10 & 960 & 114 & 0.00 & 13.41 & 10 & 54 & 9412 & 0.00 & 48.43 & 10 & 3 & 229 & 0.00 & 12.41 \\
					& 400 & 80& 4 & 4844 & 108 & 7.27 & 17.08 & 10 & 258 & 18649 & 0.00 & 53.44 & 10 & 15 & 487 & 0.00 & 16.15 \\
					& 500 & 100& 0 & 7200 & 21 & 32.84 & 29.96 & 10 & 699 & 34121 & 0.00 & 60.97 & 10 & 54 & 2108 & 0.00 & 29.32 \\
					5 & 100 & 20& 10 & 4 & 10 & 0.00 & 2.62 & 10 & 5 & 2715 & 0.00 & 43.13 & 10 & 1 & 12 & 0.00 & 2.09 \\
					& 200 & 40& 10 & 252 & 126 & 0.00 & 10.66 & 10 & 121 & 21554 & 0.00 & 54.97 & 10 & 3 & 123 & 0.00 & 7.67 \\
					& 300 & 60& 9 & 2135 & 148 & 0.37 & 11.41 & 10 & 362 & 26671 & 0.00 & 57.89 & 10 & 7 & 237 & 0.00 & 10.70 \\
					& 400 & 80& 2 & 6613 & 79 & 15.75 & 22.01 & 7 & 2625 & 115420 & 1.25 & 71.26 & 10 & 55 & 1752 & 0.00 & 20.29 \\
					& 500 & 100& 0 & 7200 & 0 & 69.97 & - & 4 & 5921 & 144971 & 5.50 & 78.17 & 10 & 442 & 16665 & 0.00 & 37.30 \\
					6 & 100 & 20& 10 & 17 & 132 & 0.00 & 4.27 & 10 & 25 & 15657 & 0.00 & 56.56 & 10 & 1 & 288 & 0.00 & 5.51 \\
					& 200 & 40& 10 & 1617 & 1122 & 0.00 & 16.48 & 8 & 2136 & 282846 & 0.52 & 66.43 & 10 & 9 & 1753 & 0.00 & 15.61 \\
					& 300 & 60& 1 & 6780 & 205 & 14.88 & 19.71 & 4 & 4009 & 168670 & 4.12 & 75.64 & 10 & 38 & 3473 & 0.00 & 19.90 \\
					& 400 & 80& 0 & 7200 & 2 & 49.43 & 28.26 & 1 & 6609 & 144185 & 27.29 & 81.51 & 10 & 452 & 16845 & 0.00 & 34.14 \\
					& 500 & 100& 0 & 7200 & 0 & 515.53 & - & 0 & 7200 & 90213 & 58.00 & 84.20 & 9 & 2590 & 67981 & 0.29 & 45.14 \\
					\multicolumn{3}{l}{\textbf{Average}}&& \textbf{3068} & \textbf{122} & \textbf{36.39} & \textbf{11.75} && \textbf{1513} & \textbf{55197} & \textbf{4.83} & \textbf{56.16} && \textbf{185} & \textbf{5701} & \textbf{0.01} & \textbf{16.99} \\
					\multicolumn{3}{l}{\textbf{Allsolved}} & \textbf{124} &&&&& \textbf{164} &&&&& \textbf{199} &&&&\\
					\bottomrule
			\end{tabular}}}
	\label{tablecgcmclp}
\end{table}

To illustrate the advantages of the proposed \LEPIs over the classic \EPIs of \citet{Edmonds2003} (i.e., inequalities \eqref{LEPI} over \eqref{EPI}), we compare the performance of the following two settings:
\begin{itemize}
	\item \EPIF: solving \eqref{cGC-MCLP} by the branch-and-cut algorithm based on the classic \EPIs of \citet{Edmonds2003};
	\item \LEPIF: solving \eqref{cGC-MCLP} by the branch-and-cut algorithm based on the proposed \LEPIs.
\end{itemize}
For benchmarking purposes, we also report the results of solving the state-of-the-art  \MILP formulation of \citet{Karatas2021} for \eqref{cGC-MCLP},  denoted as setting \EF.
Note that this formulation involves $\CO(|I||J||S|)$ variables and constraints.

\cref{tablecgcmclp} summarizes the performance results.
For each setting, we report the number of instances solved to optimality (Solved), the average CPU time in seconds (Time), the average number of explored nodes (Nodes), and the average end gap (EGap).
The end gap is computed as $100\% \times (\text{UB}-\text{LB})/\text{UB}$ where LB and UB are the lower and upper bounds returned by CPLEX (when the time limit was hit).
For settings \EPIF and \LEPIF, we also report the average LP relaxation gap at the root node (RGap).
The root gap is defined by $100\%\times (z_{\text{opt}}-z_{\text{root}})/z_{\text{opt}}$, where $z_{\text{opt}}$ is the objective value of optimal solution or best incumbent and $z_{\text{root}}$ is the objective value at the root node.

From \cref{tablecgcmclp}, we observe that for instances with a small problem size (i.e., small value of $|S|$, $|I|$, and $|J|$), solving the state-of-the-art \MILP formulation of  \citet{Karatas2021}  by CPLEX can already find an optimal solution for \eqref{cGC-MCLP}.
However, for instances with a large problem size, due to the huge numbers of variables and constraints, solving problem  \eqref{cGC-MCLP} using this approach usually fails to find an optimal solution.
In sharp contrast, the branch-and-cut algorithms based on classic \EPIs and the newly proposed \LEPIs work on a ``slim'' formulation with $\mathcal{O}(|I|+|J||S|)$  variables, which is one order of magnitude smaller, and add \EPIs and \LEPIs in a dynamic fashion, thereby achieving an overall better performance.
Overall,  \EPIF and \LEPIF can solve $40$ and $75$ more \cGCMCLP instances to optimality than \EF, respectively, with significantly less computational time.

Next, we compare the performance of the branch-and-cut algorithms based on the classic \EPIs and our newly proposed \LEPIs.
We note that the average LP relaxation gap at the root node under setting \EPIF is 56.16\% while that under setting \LEPIF
is only 16.99\%, which shows that compared with \EPIs, the proposed \LEPIs are much more effective in terms of providing a stronger \LP relaxation bound.
The improvement can be attributed to two favorable theoretical properties of \LEPIs: (i) they are facet-defining for $\conv(X_0)$ and stronger than \EPIs (as shown in \cref{lem:facet} and \cref{thm:LEPI-EPI}); and (ii) unlike \EPIs, which only describe $X_0$, \LEPIs provide a linear description of the convex hull of $X_0$.
This tighter LP relaxation directly translates into superior computational performance of  \LEPIF compared to \EPIF. 
In particular, equipped with the proposed  \LEPIs, 
\LEPIF can solve 35 more instances to optimality, and 
the average CPU time and number of nodes are reduced by factors of   8.2 (1513/185) and  9.7 (55197/5701), respectively.

\subsection{The multiple probabilistic knapsack problem with \GUB constraints}\label{section7.2}
\begin{table}
	\caption{Performance comparison of the branch-and-cut algorithms based on the default setting of CPLEX, inequalities \eqref{EPI-like} and \eqref{LEPI-like} on \MPKPG instances.}
	{\resizebox{\textwidth}{!}
	{\begin{tabular}{lllrrrrrrrrrrrrrrr}
		\toprule
		\multicolumn{1}{l}{\multirow{2}{*}{$\beta$}} 
		& \multicolumn{1}{l}{\multirow{2}{*}{$n$}} 
		& \multicolumn{1}{l}{\multirow{2}{*}{$m$}} 
		& \multicolumn{5}{c}{\CPX} & \multicolumn{5}{c}{\CPXEPI} & \multicolumn{5}{c}{\CPXLEPI}\\
		\cmidrule(r){4-8}\cmidrule(r){9-13}\cmidrule(r){14-18}
		&&& \multicolumn{1}{c}{Solved} & \multicolumn{1}{c}{Time} & \multicolumn{1}{c}{Nodes} & \multicolumn{1}{c}{EGap(\%)} & \multicolumn{1}{c}{RGap(\%)} & \multicolumn{1}{c}{Solved} & \multicolumn{1}{c}{Time} & \multicolumn{1}{c}{Nodes} & \multicolumn{1}{c}{EGap(\%)} & \multicolumn{1}{c}{RGap(\%)} & \multicolumn{1}{c}{Solved} & \multicolumn{1}{c}{Time} & \multicolumn{1}{c}{Nodes} & \multicolumn{1}{c}{EGap(\%)} & \multicolumn{1}{c}{RGap(\%)} \\ \midrule
		0.3& 80& 20& 10 & 49 & 14424 & 0.00 & 43.85 & 10 & 21 & 9149 & 0.00 & 36.86 & 10 & 16 & 7469 & 0.00 & 28.92 \\
		&& 30& 10 & 35 & 18181 & 0.00 & 59.13 & 10 & 25 & 10674 & 0.00 & 51.73 & 10 & 21 & 8734 & 0.00 & 40.67 \\
		&& 40& 10 & 38 & 15589 & 0.00 & 71.02 & 10 & 32 & 9990 & 0.00 & 64.25 & 10 & 27 & 9053 & 0.00 & 49.91 \\
		& 120& 20& 10 & 170 & 72743 & 0.00 & 40.60 & 10 & 62 & 29367 & 0.00 & 33.55 & 10 & 38 & 17818 & 0.00 & 24.24 \\
		&& 30& 10 & 246 & 101932 & 0.00 & 54.28 & 10 & 121 & 47614 & 0.00 & 46.68 & 10 & 73 & 29054 & 0.00 & 33.84 \\
		&& 40& 10 & 237 & 75023 & 0.00 & 63.64 & 10 & 146 & 41114 & 0.00 & 55.43 & 10 & 97 & 28548 & 0.00 & 40.21 \\
		& 160& 20& 10 & 1338 & 528805 & 0.00 & 39.06 & 10 & 284 & 109281 & 0.00 & 31.47 & 10 & 134 & 58988 & 0.00 & 21.82 \\
		&& 30& 10 & 2008 & 552463 & 0.00 & 51.47 & 10 & 623 & 165292 & 0.00 & 43.83 & 10 & 347 & 109483 & 0.00 & 30.50 \\
		&& 40& 10 & 2240 & 529906 & 0.00 & 62.83 & 10 & 926 & 212765 & 0.00 & 54.04 & 10 & 554 & 143879 & 0.00 & 38.46 \\
		0.5& 80& 20& 10 & 91 & 53603 & 0.00 & 15.03 & 10 & 17 & 5345 & 0.00 & 10.26 & 10 & 11 & 2744 & 0.00 & 7.58 \\
		&& 30& 10 & 251 & 110355 & 0.00 & 19.42 & 10 & 42 & 13539 & 0.00 & 15.20 & 10 & 22 & 6649 & 0.00 & 10.46 \\
		&& 40& 10 & 576 & 172976 & 0.00 & 22.05 & 10 & 76 & 20098 & 0.00 & 17.74 & 10 & 38 & 9610 & 0.00 & 11.56 \\
		& 120& 20& 10 & 798 & 312215 & 0.00 & 14.56 & 10 & 144 & 62141 & 0.00 & 11.55 & 10 & 41 & 16780 & 0.00 & 8.27 \\
		&& 30& 7 & 3321 & 745795 & 8.36 & 18.84 & 10 & 751 & 187878 & 0.00 & 15.77 & 10 & 159 & 47949 & 0.00 & 10.27 \\
		&& 40& 4 & 5762 & 979783 & 3.29 & 20.89 & 10 & 1248 & 233827 & 0.00 & 17.59 & 10 & 249 & 62641 & 0.00 & 11.66 \\
		& 160& 20& 6 & 2417 & 648606 & 4.55 & 9.63 & 9 & 972 & 281801 & 0.75 & 7.73 & 10 & 108 & 38050 & 0.00 & 5.99 \\
		&& 30& 0 & 7200 & 1187223 & 4.81 & 16.68 & 4 & 5187 & 872784 & 3.20 & 14.62 & 10 & 727 & 180941 & 0.00 & 10.81 \\
		&& 40& 0 & 7200 & 812611 & 9.84 & 19.48 & 1 & 7013 & 892702 & 3.90 & 16.62 & 10 & 1643 & 321699 & 0.00 & 11.16 \\
		\multicolumn{3}{l}{\textbf{Average}}&& \textbf{1888} & \textbf{385124} & \textbf{1.71} & \textbf{35.69}&& \textbf{983} & \textbf{178076} & \textbf{0.44} & \textbf{30.27}&& \textbf{239} & \textbf{61116} & \textbf{0.00} & \textbf{22.02}\\
		\multicolumn{3}{l}{\textbf{Allsolved}} & \textbf{147} &&&&& \textbf{164} &&&&& \textbf{180} &&&&\\
		\bottomrule
	\end{tabular}}}
    \label{tablesocp}
\end{table}
Next, we demonstrate the strength of the proposed \LEPI's for solving problem \eqref{problemSOCP}.
We generate \MPKPG instances using a procedure similar to that of \citet{Atamturk2013}.
Specifically, for each instance,
\begin{itemize}
	\item The numbers of variables $|N|$ and  probabilistic knapsack constraints $|M|$ are taken from $\{80, 120, 160\}$ and $\{20,30,40\}$, respectively.
	\item The sets $\{Q_k\}_{k\in K}$ are set to be disjoint with $\bigcup_{k\in K}Q_k=N$, where each $|Q_k|$ is randomly chosen from $[0.05|N|, 0.10|N|]$;
	\item The reliability $\rho$ is set to $0.95$; the parameters $c_i$, $a_{im}$, and $\sigma_{im}$ are set to be integers uniformly chosen from $[1, 1000]$, $[1, 100]$, and $[1, 2a_{im}]$, respectively; and
	$b_m=\beta\cdot(\sum_{k\in K} \max_{i\in Q_k}\{a_{im}\}
	+\Phi^{-1}(\rho)\sqrt{\sum_{k\in K} (\max_{i\in Q_k}\{\sigma_{im}\})^2})$, where $\beta$ is a parameter taken from $\beta\in\{0.3, 0.5\}$.
\end{itemize}  
For each combination of $|N|$, $|M|$, and $\beta$, we generate 10 random instances, yielding a testbed of $180$ instances.

Note that problem \eqref{problemSOCP} is a mixed integer second order conic programming problem, and hence can be directly solved by CPLEX.
Also note that based on the substructure $X^m$ in \eqref{substructureXm}, we can construct the proposed \LEPI's and the \EPI's, and use them to solve formulation \eqref{problemSOCP}.
To illustrate the advantages of the proposed \LEPI's, we compare the performance of the following three settings:

\begin{itemize}
	\item \CPX: using CPLEX to solve the \SOCP formulation \eqref{problemSOCP},
	\item \CPXEPI: \CPX with \EPI's,
	\item \CPXLEPI:  \CPX with \LEPI's.
\end{itemize}

\cref{tablesocp} summarizes the performance results.
From \cref{tablesocp}, we first observe that the average continuous relaxation gap at the root node under setting \CPX is $35.69\%$ while that under setting \CPXEPI is $30.27\%$, which shows that \EPI's can indeed strengthen the continuous relaxation of problem \eqref{problemSOCP}.
Thus, \CPXEPI achieves an overall better performance than \CPX.
Note that the average continuous relaxation gap at the root node under the setting \CPXLEPI is only $22.02\%$,
which shows that the \LEPI's provide significantly tighter continuous relaxation of problem \eqref{problemSOCP} than \EPI's.
This, again, confirms the strength of the proposed \LEPI's over \EPI's; see also \cref{tmpcoro}. 
Due to this advantage,  \CPXLEPI significantly outperforms  \CPXEPI. 
\CPXLEPI can solve all instances to optimality with the average CPU time being $239$ seconds and the average number of nodes being $61116$, while \CPXEPI can only solve $164$ instances to optimality with the average CPU time being  $983$ seconds and the average number of nodes being $178076$.
Notably, for the case with $\beta=0.5$, $n=160$, and $m=40$, \CPXLEPI can solve all $10$ instances to optimality while \CPXEPI can only solve $1$ instance to optimality within the time limit of $2$ hours.

\section{Concluding Remarks}\label{section8}
In this paper, we have studied the polyhedral structure of the submodular sets with \GUB constraints: $X_0=\{ (w,x) \in \R \times\{0,1\}^{n}\,:\, 
w \geq f(a^\top x),~ \sum_{i \in N_k} x_i \leq 1,~ \forall~ k\in [t]\}$ and its generalization $X=\{ (w,x) \in \R \times\{0,1\}^{n}\,:\, 
w \geq f(a^\top x)+b^\top x,~ \sum_{i \in N_k} x_i \leq 1,~ \forall~ k\in [t]\}$, which arise as important substructures in many applications.
We have developed a class of strong valid inequalities for the two sets, called  \LEPIs, using sequential lifting techniques.
Three key features, which make them particularly suitable to be embedded in a branch-and-cut framework to solve related \MINLP problems, are as follows.
First, they are facet-defining for the convex hulls of the two sets and are stronger than the well-known \EPIs of \citet{Edmonds2003}. 
Second, together with the bound and \GUB constraints, they are able to provide complete linear descriptions of the convex hulls of the two sets.
Third, they can be separated using an efficient $\CO (n\log n)$-time  algorithm.
By extensive computational experiments on \cGCMCLP{s} and \MPKPG{s}, we have demonstrated that compared with the classic \EPIs, the proposed \LEPIs are much more effective in strengthening the continuous relaxations and  improving the overall computational performance of the branch-and-cut algorithms.

A promising direction for future research is to study a more general submodular set with \GUB constraints, namely, $X^g=\{ (w,x) \in \R \times\{0,1\}^{n}\,:\, 
w \geq g(x),~ \sum_{i \in N_k} x_i \leq 1,~ \forall~ k\in [t]\}$, where $g$ is an arbitrary submodular function.
Studying the polyhedral structure of $\conv(X^g)$, however, poses more challenges compared to those of $\conv(X_0)$ and $\conv(X)$.
In particular, as shown\label{key} in Appendix \ref{appendixC}, the submodular minimization with {disjoint} GUB constraints, namely $\min_{x\in \{0,1\}^n}\left\{g(x): \sum_{i\in N_k}x_i\leq 1, ~\forall~ k\in [t]\right\}$, is NP-hard, and therefore one cannot  expect to  obtain a tractable linear description for $\conv(X^g)$.
Nevertheless, it would be still interesting to investigate the sequential lifting procedure for $X^g$ that takes the \GUB constraints into consideration towards  developing strong valid inequalities to provide a partial characterization of  $\conv(X^g)$. 

\newpage
\begin{appendices}
\section{A proof of \cref{lem:D1-D2}}\label{appendixA}
\begin{proof}
By \cref{prop:W-equals-U}(iv), it suffices to show $\{U^{\D^1}_j\}_{j\in [n]}\neq \{U^{\D^2}_j\}_{j\in [n]}$.
Let $\ell$ be the smallest index such that $\D^1_{\ell}\neq \D^2_{\ell}$.
Then $\D^1(j)=\D^2(j)$ for all $j < \ell$. 
Without loss of generality, we assume that $\D^1_{\ell}>\D^2_{\ell}$.
Let $k_1, k_2\in [t]$ be such that $\D^1_\ell \in N_{k_1}$ and $\D^2_\ell\in N_{k_2}$.
If $k_1 = k_2$, then by $\D^2_{\ell}\notin \D^2(\ell-1)=\D^1(\ell-1)$, $\D^2_{\ell}\neq \D^1_{\ell}$, and $\D^1\in \Delta$, $\D^2_{\ell}>\D^1_{\ell}$ must hold, a contradiction with $\D^1_{\ell}>\D^2_{\ell}$.
Therefore, $k_1\neq k_2$ must hold. 

For	any $j\leq \ell-1$, we have   
\begin{equation*}
	\max_{i\in N_{k_1}\cap \D^2(j)}i 
	= \max_{i\in N_{k_1}\cap \D^1(j)}i
	< 
	\max_{i\in N_{k_1}\cap \D^1(\ell)}i=\D^1_{\ell}
\end{equation*} 
and therefore, it follows from the definitions of $U^{\D^1}_{\ell}, U^{\D^2}_j$ in \eqref{def:U} that $U^{\D^1}_{\ell}\neq U^{\D^2}_j$. 
For any $j\geq \ell$, we have  
\begin{equation*}
	\max_{i\in N_{k_2}\cap \D^2(j)}i\geq 	\max_{i\in N_{k_2}\cap \D^2(\ell)}i= \D^2_{\ell}>\max_{i\in N_{k_2}\cap \D^2(\ell-1)}i=\max_{i\in N_{k_2}\cap \D^1(\ell-1)}i
	\stackrel{(a)}{=}\max_{i\in N_{k_2}\cap \D^1(\ell)}i
\end{equation*}
where (a) follows from $\D^1_{\ell}\notin N_{k_2}$ (as $\D^1_{\ell}\in N_{k_1}$  and $k_1\neq k_2$). 
Similarly, it follows from the definition of $U^{\D^1}_{\ell}, U^{\D^2}_j$ in \eqref{def:U} that $U^{\D^1}_{\ell}\neq U^{\D^2}_j$. 
Consequently, $U_\ell^{\D^1} \notin \{U^{\D^2}_j\}_{j\in [n]}$ and  hence $\{U^{\D^1}_j\}_{j\in [n]}\neq \{U^{\D^2}_j\}_{j\in [n]}$.
\end{proof}

\section{An $\mathcal{O}(n^3)$ algorithm for solving problem \eqref{opt}}\label{appendixB}
	Here, we present an $\mathcal{O}(n^3)$ algorithm for solving problem \eqref{opt}; see \citet{Hassin1989} or \citet{Atamturk2009} for similar algorithms for a class of submodular minimization problems or with a cardinality constraint, respectively. 
	
	We first consider the two simple cases $d<0$ and $d=0$ for which solving \eqref{opt} is easy.
	For $d<0$, as $(w,x)=(1,\boldsymbol{0})$ is a ray of $\conv\{(w,x)\in \R\times \{0,1\}^n\,:\, 
	w\geq f(a^\top x), ~\sum_{i\in N_k}x_i\leq 1, ~\forall~ k\in [t]\}$, problem \eqref{opt} must be unbounded.
	For $d=0$, it follows that
	$$\min_{x\in \{0,1\}^n}\left\{c^\top x\,:\, \sum_{i\in N_k}x_i\leq 1, ~\forall~ k\in [t]\right\}
	= \sum_{k = 1}^{t} \min\left\{\min_{i\in N_k} c_i, 0\right\},$$
    and thus problem \eqref{opt} can be solved in $\CO(n)$ time.
	
	Next we consider the case $d>0$.
	Since $w=f(a^\top x)$ must hold for each optimal solution $(w,x)$ of problem \eqref{opt}, we can project variable $w$ out from problem \eqref{opt} and rewrite \eqref{opt} as a set optimization problem.
	
	\begin{equation}\label{equiopt1}
		\min_{S\subseteq [n]}\{d f(a(S))+c(S)\,:\,|S\cap N_k|\leq 1, ~\forall~ k\in [t]\}.
	\end{equation}
	Letting $Z=\{(a(S), c(S))\,:\, ~S\subseteq [n], ~|S\cap N_k|\leq 1, ~\forall~ k\in [t]\}$, then problem \eqref{equiopt1} is equivalent to 
	$	\min_{(\alpha, \beta) \in Z} d f(\alpha) + \beta$.
	Consider a relaxation problem of \eqref{equiopt1}:
	\begin{equation}\label{equiopt2}
		\min_{(\alpha, \beta) \in \conv(Z)} d f(\alpha) + \beta.
	\end{equation}
	Since $f$ is concave and $d >0$, $d f(\alpha) + \beta$ is concave on $(\alpha, \beta)$.
	As a result, problem \eqref{equiopt2} must have an optimal solution that is an extreme point of $\conv(Z)$, and therefore, 
	problem \eqref{equiopt2} is equivalent to \eqref{equiopt1}.
	
	To solve problem \eqref{equiopt2}, it suffices to enumerate all extreme points of $\conv(Z)$, which can be done by finding an optimal solution to
	\begin{equation}\label{optX}
		\min_{(\alpha, \beta) \in Z} \lambda_1 \alpha + \lambda_2 \beta
	\end{equation}
	for each $(\lambda_1, \lambda_2)\in \R^2\backslash \{(0,0)\}$. 
	Observe that 
	$\min_{(\alpha, \beta) \in  Z} \lambda_1 \alpha + \lambda_2 \beta 
	 = \min_{S\subseteq [n]}\{\lambda_1a(S)+\lambda_2 c(S)\,:\, |S\cap N_k|\leq 1, ~\forall~ k\in [t]\}
	=\sum_{k=1}^t \min\left\{\min_{i\in N_k} \left\{ \lambda_1a_i+\lambda_2 c_i \right\},0\right\}$.
	Therefore, for a fixed $(\lambda_1, \lambda_2)\in \R^2\backslash \{(0,0)\}$, we can construct an optimal solution 
	\begin{equation}\label{candicate}
		\begin{aligned}
			& (\alpha^*,\beta^*)=(a(S^*), c(S^*)), ~\text{where}~S^* = \{ \ell_k \, : \, \lambda_1a_{\ell_k}+\lambda_2c_{\ell_k} < 0, ~k \in [t]  \},\\
			 & \qquad\qquad\quad
			 \ell_k= \min\left\{i_0\in N_k \,:\, \lambda_1 a_{i_0} +\lambda_2 c_{i_0}  = \min\limits_{i\in N_k} \left\{ \lambda_1a_i+\lambda_2c_i \right\}\right\}, ~\forall~k \in [t].
		\end{aligned}
	\end{equation}
	in $\CO(n)$ time.
	In the following, we will show that to solve problems \eqref{optX} with different $(\lambda_1, \lambda_2)\in \R^2\backslash \{(0,0)\}$, it suffices to consider at most $\mathcal{O}(n^2)$ candidate solutions. 
	This implies that
	there are at most $\mathcal{O}(n^2)$ candidate extreme points of $\conv(Z)$, which, together with the fact that computing a solution $(a(S^*), c(S^*))$ can be performed in $\mathcal{O}(n)$ time, implies that problem \eqref{equiopt1} can be solved in $\mathcal{O}(n^3)$ time.
	
	We consider the following four cases of problem \eqref{optX}. 
	\begin{itemize}
		\item [(i)] $\lambda_1=0$ and $\lambda_2 > 0$. Then, one optimal solution of problem \eqref{optX} is given by $(\alpha^*,\beta^*)=(a(S^*), c(S^*))$ where $S^* = \{ \ell_k \, : c_{\ell_k} < 0, ~k \in [t]  \}$ and $ \ell_k= \min\{i_0\in N_k \,:\,  c_{i_0}  = \min\limits_{i\in N_k} \left\{c_i \right\}\}$  for $k \in [t]$.
		\item [(ii)] $\lambda_1=0$ and $\lambda_2 < 0$. Similarly, 
		one optimal solution of problem \eqref{optX} is given by $(\alpha^*,\beta^*)=(a(S^*), c(S^*))$ where $S^* = \{ \ell_k \, : -c_{\ell_k} < 0, ~k \in [t]  \}$ and $ \ell_k= \min\{i_0\in N_k \,:\, - c_{i_0}  = \min\limits_{i\in N_k} \left\{ -c_i \right\}\}$  for $k \in [t]$.
		\item [(iii)] $\lambda_1>0$. Letting $\theta=\frac{\lambda_2}{\lambda_1} \in \R$, then problem \eqref{optX} is equivalent to $\min_{(\alpha, \beta) \in Z}  \alpha + \theta \beta$ and $\ell_k$ in \eqref{candicate} reduces to
		$\ell_k = \min\{i_0\in N_k \,:\,  a_{i_0} +\theta c_{i_0}  = \min\limits_{i\in N_k} \left\{ a_i+\theta c_i \right\}\}$.
		Let 
		$$
			\Theta = \bigcup_{k \in [t]} \left( \left\{ \frac{a_i-a_j}{c_j - c_i}\, : \, c_j \neq c_i,~\forall~i,j \in N_k  \right\}\cup \left\{ -\frac{a_i}{c_i} \, : \, c_i \neq 0,~\forall~i \in N_k \right\}\right),
		$$
		and $\theta_1, \theta_2, \ldots, \theta_\tau $ be such that $ \Theta = \{ \theta_1 ,\theta_2, \ldots \theta_\tau \}$ and $\theta_1 < \theta_2 < \cdots < \theta_\tau$. 
		Denote $\theta'_{0} = \theta'_{1}-1$, $\theta'_{i}:=\frac{\theta_i+ \theta_{i+1}}{2}$ for $i \in [\tau-1]$, and $\theta'_{\tau}:=\theta_{\tau}+1$.
		Observe that 
		\begin{itemize}
		\item [(1)] for $\theta  \in (\theta_i, \theta_{i+1})$ with $i\in [\tau-1]$, problem $\min_{(\alpha, \beta) \in Z}  \alpha + \theta \beta$ has an optimal solution that is identical to an optimal solution of  $\min_{(\alpha, \beta) \in Z}  \alpha + \theta'_{i} \beta$ (i.e.,  $(a(S^*), c(S^*))$ in \eqref{candicate} with $\frac{\lambda_2}{\lambda_1}=\theta'_i$); 
		\item [(2)] for $\theta \in (-\infty, \theta_{1})$, problem $\min_{(\alpha, \beta) \in Z}  \alpha + \theta \beta$ has an optimal solution that is identical to an optimal solution of  $\min_{(\alpha, \beta) \in Z}  \alpha + \theta'_0 \beta$ (i.e.,  $(a(S^*), c(S^*))$ in \eqref{candicate} with $\frac{\lambda_2}{\lambda_1}=\theta'_0$);
		\item [(3)] for $\theta \in (\theta_{\tau}, +\infty)$, problem $\min_{(\alpha, \beta) \in Z}  \alpha + \theta \beta$ has an optimal solution that is identical to an optimal solution of  $\min_{(\alpha, \beta) \in Z}  \alpha + \theta'_{\tau} \beta$ where (i.e.,  $(a(S^*), c(S^*))$ in \eqref{candicate} with $\frac{\lambda_2}{\lambda_1}=\theta'_{\tau}$);
		\item [(4)] for $\theta=\theta_i$ with $i\in [\tau]$, problem $\min_{(\alpha, \beta) \in Z}  \alpha + \theta \beta$ has an optimal solution that is identical to an optimal solution of  $\min_{(\alpha, \beta) \in Z}  \alpha + \theta' \beta$ where $\theta'=\theta'_{i-1}$ or $\theta'_i$ (i.e.,  $(a(S^*), c(S^*))$ in \eqref{candicate} with $\frac{\lambda_2}{\lambda_1}=\theta'_{i-1}$ or $\theta'_i$).
		\end{itemize}
		Note that there are at most $1+\sum_{k=1}^t \binom{|N_k|+1}{2}=1+\sum_{k=1}^t(|N_k|+|N_k|^2)/2=1+(n+ \sum_{k=1}^t|N_k|^2)/2\leq (n+n^2)/2+1$ points in $\{\theta'_i\}_{i=0}^{\tau}$.
		Therefore, to solve problem  $\min_{(\alpha, \beta) \in Z}  \alpha + \theta \beta$ with arbitrary $\theta \in \mathbb{R}$, or equivalently, to solve problems \eqref{optX} with arbitrary $(\lambda_1, \lambda_2) \in \mathbb{R}^2$ satisfying  $\lambda_1 > 0$, we only need to consider the at most $(n+n^2)/2+1$ candidate solutions $(a(S^*), c(S^*))$ in \eqref{candicate} with $\frac{\lambda_2}{\lambda_1}=\theta'_i$ for some $i \in [\tau]\cup 0$.
		\item [(iv)] $\lambda_1<0$. Similar to case (iii), we can  show that to solve problems \eqref{optX} with arbitrary $(\lambda_1, \lambda_2) \in \mathbb{R}^2$ satisfying  $\lambda_1 < 0$, we only need to consider the at most $(n+n^2)/2+1$ candidate solutions.
	\end{itemize}

\section{Hardness of the submodular minimization with {disjoint} GUB constraints}\label{appendixC}
For notational convenience in the proof, we restate  the submodular minimization with {disjoint} GUB constraints, namely, $\min_{x\in \{0,1\}^n}\left\{g(x): \sum_{i\in N_k}x_i\leq 1, ~\forall~ k\in [t]\right\}$ as a set optimization problem:
\begin{equation}\label{prob:minsubmodular}
	\min_{S\subseteq [n]}\left\{g(S): |S\cap N_k|\leq 1, ~\forall~ k\in [t]\right\}.
\end{equation}
Here we refer to $g(\chi^S)$ as a set function $g(S)$ by abusing notation.
\begin{Theorem}\label{thm:nphard}
	Problem \eqref{prob:minsubmodular}
	is NP-hard, where $g:2^{[n]}\rightarrow \R$ is a submodular function and $\{N_k\}_{k\in[t]}$ is a partition of $[n]$.
\end{Theorem}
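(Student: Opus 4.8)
The plan is to prove NP-hardness by a polynomial-time reduction from the \textsc{Clique} problem: given a graph $H=(V_H,E_H)$ with $|V_H|=m$ and an integer $k\ge 2$, decide whether $H$ contains a clique of size $k$. The underlying idea is to let the partition $\{N_k\}_{k\in[t]}$ model the choice of $k$ vertices of $H$ (one per block), and to design the submodular objective so that the minimum value is attained exactly at the transversals corresponding to $k$-cliques.

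Concretely, I would take $t=k$ blocks, with each block $N_\ell$ a private copy $N_\ell=\{(v,\ell):v\in V_H\}$ of the vertex set, so that $n=km$. I then build an auxiliary graph $G'$ on the ground set $[n]$ by joining $(u,\ell_1)$ and $(v,\ell_2)$ precisely when $\ell_1\neq\ell_2$, $u\neq v$, and $uv\in E_H$. Writing $e_{G'}(S)$ for the number of edges of $G'$ with both endpoints in $S$, I fix $M:=k^2+1$ and define $g(S):=-M|S|-e_{G'}(S)$. Since $S\mapsto -M|S|$ is modular and $S\mapsto -e_{G'}(S)$ is submodular (the internal-edge count $e_{G'}$ is supermodular, as its marginal $e_{G'}(S\cup i)-e_{G'}(S)=|N_{G'}(i)\cap S|$ is nondecreasing in $S$), the function $g$ is submodular; it is clearly computable in polynomial time, and the whole instance of \eqref{prob:minsubmodular} is constructed in polynomial time.

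For correctness I would first argue that every optimal feasible $S$ must be a full transversal, i.e.\ $|S\cap N_\ell|=1$ for all $\ell$: if $|S|=j<k$ then $g(S)\ge -Mj-\binom{j}{2}$, whereas any full transversal $S'$ satisfies $g(S')\le -Mk-\binom{k}{2}$, and the inequality $-Mj-\binom{j}{2}>-Mk-\binom{k}{2}$ holds because $M(k-j)\ge M=k^2+1>\binom{k}{2}-\binom{j}{2}$. Next, for a full transversal $S=\{(v_1,1),\dots,(v_k,k)\}$ one has $g(S)=-Mk-e_{G'}(S)$ with $e_{G'}(S)\le\binom{k}{2}$, and equality $e_{G'}(S)=\binom{k}{2}$ holds iff $S$ is a clique of $G'$, which by construction of $G'$ happens iff $v_1,\dots,v_k$ are pairwise distinct and pairwise adjacent in $H$, i.e.\ iff $\{v_1,\dots,v_k\}$ is a $k$-clique of $H$. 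Hence the optimal value of \eqref{prob:minsubmodular} equals $-Mk-\binom{k}{2}$ if $H$ has a $k$-clique and is at least $-Mk-\binom{k}{2}+1$ otherwise, which completes the reduction and proves the theorem.

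The main point requiring care in the write-up is the submodularity bookkeeping together with the role of the ``$u\neq v$, $\ell_1\neq\ell_2$'' conditions defining $G'$: it is precisely the requirement $u\neq v$ that prevents a transversal from ``cheating'' by reusing a single vertex of $H$ across several blocks while still collecting the corresponding edges, so that $e_{G'}(S)=\binom{k}{2}$ genuinely certifies $k$ distinct mutually adjacent vertices. Everything else (the choice of $M$, the two-case comparison of objective values, and the observation that $g$ is submodular as a modular term plus the negation of a supermodular term) is routine.
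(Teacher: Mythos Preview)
Your reduction from \textsc{Clique} is correct in substance and is a genuinely different route from the paper's proof. One small slip: you write ``any full transversal $S'$ satisfies $g(S')\le -Mk-\binom{k}{2}$'', but the inequality goes the other way (since $e_{G'}(S')\le\binom{k}{2}$, one has $g(S')\ge -Mk-\binom{k}{2}$). What you actually need, and what is true, is that \emph{some} full transversal satisfies $g(S')\le -Mk$ (take any one, as $e_{G'}(S')\ge 0$); then $-Mj-\binom{j}{2}>-Mk$ for $j<k$ follows from $M(k-j)\ge M=k^2+1>\binom{k-1}{2}\ge\binom{j}{2}$, which forces every optimum to be a full transversal. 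With this fix the argument is complete.

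By contrast, the paper reduces from \textsc{Independent Set} with a rather different gadget: it takes $n=2m$, blocks $N_k=\{k,k+m\}$ of size exactly two, and uses the submodular function $h(S)=\sum_{i=1}^m(1-\mathbf{1}_{D_i\subseteq S})$ where $D_i=\{j+m:(i,j)\in E\}\cup\{i\}$; a feasible $S$ with $h(S)\le m-\ell$ exists iff $G$ has an independent set of size $\ell$. Your construction is arguably more transparent (the objective is modular minus an edge count, whose submodularity is immediate), but the paper's reduction establishes something slightly sharper: hardness persists even when every GUB block has cardinality two, whereas your blocks have size $|V_H|$. Both proofs rely on submodular functions that are explicitly described and evaluable in polynomial time, so the notion of NP-hardness is the same in each.
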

To prove \cref{thm:nphard}, we first define the following set function:
\begin{equation}\label{hdef}
	h(S):=\sum_{i\in O}(1-\mathbf{1}_{D_i\subseteq S}).
\end{equation}
where $\{D_i\}_{i\in O}$ is an arbitrary collection of subsets of $[n]$, $S\subseteq [n]$, and $\mathbf{1}_{D_i \subseteq S}=1$ is the indicator function that equals 1 if $D_i \subseteq S$
and $0$ otherwise.

\begin{Lemma}\label{lem:submodularh}
	The set function $h$ is submodular.
\end{Lemma}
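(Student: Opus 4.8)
The plan is to reduce the claim to a single elementary fact about indicator functions by exploiting closure properties of submodularity. Observe first that $h(S)=|O|-\sum_{i\in O}\mathbf{1}_{D_i\subseteq S}$. The constant function $S\mapsto|O|$ is submodular (indeed modular), and a sum of finitely many submodular functions is submodular, since the diminishing-returns inequality in the definition is linear in the function. Hence it suffices to prove that, for each fixed $D\subseteq[n]$, the function $S\mapsto-\mathbf{1}_{D\subseteq S}$ is submodular, or equivalently that $\phi_D(S):=\mathbf{1}_{D\subseteq S}$ is \emph{supermodular}.

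To verify supermodularity of $\phi_D$, I would work with the set-theoretic form of the definition: it suffices to check that for all $S\subseteq T\subseteq[n]$ and every $j\in[n]\setminus T$ one has $\phi_D(S\cup j)-\phi_D(S)\le\phi_D(T\cup j)-\phi_D(T)$. Since $\phi_D$ is monotone nondecreasing, both marginal increments lie in $\{0,1\}$, so the only configuration that could violate the inequality is that the left-hand increment equals $1$ while the right-hand increment equals $0$. Suppose $\phi_D(S\cup j)-\phi_D(S)=1$; then $D\subseteq S\cup j$ but $D\not\subseteq S$, which forces $j\in D$ and $D\setminus j\subseteq S\subseteq T$. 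Combined with $j\notin T$, this gives $D\not\subseteq T$ together with $D\subseteq T\cup j$, so the right-hand increment is also $1$ --- contradiction. Therefore $\phi_D$ is supermodular, each term $1-\mathbf{1}_{D_i\subseteq S}$ is submodular, and $h$ is submodular. Equivalently, one can run exactly the same case analysis directly on the paper's definition: take $x\le y$ with $x_j=y_j=0$, let $S,T$ be the supports of $x,y$, and compare the increments term by term over $i\in O$; the argument is identical.

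I do not expect a genuine obstacle here, since the statement is a structural/combinatorial fact rather than a quantitative one. The only mild subtlety --- and the step I would be most careful to state cleanly --- is the reduction itself: peeling off the additive constant $|O|$ and the sign flip so that what remains is the clean assertion that $\mathbf{1}_{D\subseteq S}$ is supermodular, after which the one-line case analysis on when adding $j$ first completes a ``covered'' set $D$ finishes the proof.
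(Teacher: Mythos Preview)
Your proof is correct and is essentially identical to the paper's: both reduce to showing that each term $1-\mathbf{1}_{D_i\subseteq S}$ is submodular (equivalently, that $\mathbf{1}_{D_i\subseteq S}$ is supermodular) via the same case analysis on the marginal increments, and then conclude by closure of submodularity under nonnegative sums. The only cosmetic difference is that you peel off the additive constant $|O|$ explicitly, whereas the paper works directly with $\sigma_i(S)=1-\mathbf{1}_{D_i\subseteq S}$.
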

\begin{proof}
	Let $\sigma_i(S)= 1-\mathbf{1}_{D_i\subseteq S}$ for $i \in O$. 
	We first show that $\sigma_i(S)$ is submodular, that is,
	\begin{equation}\label{supermodularindicator}
		\mathbf{1}_{D_i\subseteq (S\cup j)}- \mathbf{1}_{D_i\subseteq S}
		\leq \mathbf{1}_{D_i \subseteq (V\cup j)}- \mathbf{1}_{D_i\subseteq V}.
	\end{equation}
	holds for all $S\subseteq V \subseteq [n]$ and $j\in [n]\backslash V $.
	Observe that $\mathbf{1}_{D_i\subseteq (S\cup j)} \geq \mathbf{1}_{D_i\subseteq S}$ and $\mathbf{1}_{D_i\subseteq (V\cup j)} \geq \mathbf{1}_{D_i\subseteq V}$. 
	Thus, \eqref{supermodularindicator} holds if  $\mathbf{1}_{D_i\subseteq (S\cup j)} = \mathbf{1}_{D_i\subseteq S}$.
	Otherwise, $\mathbf{1}_{D_i\subseteq (S\cup j)}=1$ and $\mathbf{1}_{D_i\subseteq S}=0$, which implies $D_i\subseteq S\cup j \subseteq V\cup j$ and $j \in D_i$.
	This, together with $j \notin V$, implies $\mathbf{1}_{D_i \subseteq (V\cup j)}- \mathbf{1}_{D_i\subseteq V}=1-0=1$.
	This shows the submodularity of $\sigma_i(S)$ for $i \in O$.
	Finally, the submodularity of $h(S)$ follows from the submodularity of $\{\sigma_i(S)\}_{i \in O}$ and  the fact that any non-negative linear combination of submodular functions is submodular. 
\end{proof}
We now come to prove \cref{thm:nphard}.
\begin{proof}{(Proof of \cref{thm:nphard})}
	 We show that the decision version of problem \eqref{prob:minsubmodular}
	 \begin{quote}
	 	Given a rational value $\tau\in \mathbb{Q}$, a partition $\{N_k\}_{k \in [t]}$ of $[n]$,
 		and a submodular function $g:2^{[n]}\rightarrow \R$, does there exist a subset $S \subseteq [n]$ such that $g(S) \leq \tau $ and $ |S\cap N_k|\leq 1$ for all $ k\in [t]$?
	 \end{quote}
	 is NP-complete by reduction from the NP-complete  \emph{independent set problem} (\ISP) \citep{Garey1979}.
	Given a graph $G=(V,E)$ and a positive integer $\ell \leq |V|$, the \ISP asks to decide whether there exists an independent set $I$ such that $|I|\geq \ell$ (an independent set $I$ is a subset of $V$ such that for any $i_1, i_2\in I$, it follows $(i_1, i_2) \notin E$).
	Without loss of generality, we assume that $V=[m]$.
	Given an instance of the \ISP, we construct an instance of decision version of problem \eqref{prob:minsubmodular} by setting $n=2m$, $t=m$, $N_k=\{k, k+m\}$ for $k\in [m]$, $O=[m]$, $D_i=\{j+m\,:\, (i,j)\in E\}\cup \{i\}$ for $i\in [m]$, 
 	$g(S):=h(S)=\sum_{i=1}^m(1-\mathbf{1}_{D_i\subseteq S})$, and $\tau = m-\ell$.
 	By  \cref{lem:submodularh}, $h$ is a submodular function.
	In the following, we shall show that the answer to the decision version of problem \eqref{prob:minsubmodular} is yes  if and only if the answer to the \ISP is yes.
	
	Suppose that the answer to the \ISP is yes, i.e.,
	there exists an independent set $I$ such that $|I|\geq \ell$.
	Let $S'=I\cup \{j+m\,:\, j\in [m]\backslash I\}$.
	It follows from $N_k = \{k, k +m\}$ that $|S'\cap N_k|\leq 1$ for all $k\in [m]$.
	To show that the answer to the decision version of problem \eqref{prob:minsubmodular} is yes, it suffices to prove $h(S')\leq m-\ell$.
	Letting $i\in I$, then it follows from the definition of the independent set that for all $j$ with $(i,j)\in E$, $j\in [m]\backslash I$ must hold.
	Thus, $D_i=\{j+m\,:\, (i,j)\in E\}\cup \{i\} \subseteq \{j+m\,:\, j\in [m]\backslash I\}\cup \{i\} \subseteq \{j+m\,:\, j\in [m]\backslash I \}\cup I=S'$.
	This means that $\mathbf{1}_{D_i\subseteq S'}=1$ for all $i\in I$, and thus
	\begin{equation*}
		h(S')=\sum_{i=1}^m(1-\mathbf{1}_{D_i\subseteq S'})=m-\sum_{i=1}^m \mathbf{1}_{D_i\subseteq S'}
		\leq m-\sum_{i\in I}\mathbf{1}_{D_i\subseteq S'}=m-|I|\leq m-\ell.
	\end{equation*} 
	
	Suppose that the answer to the decision version of problem \eqref{prob:minsubmodular} is yes, i.e., there exists a subset $S'\subseteq [2m]$ such that $|S'\cap N_k|\leq 1$ for $k\in [m]$ and  $h(S')\leq m-\ell$.
	Denote $I=\{i\in [m]\,:\, D_i\subseteq S'\}$.
	Then $|I|=\sum_{i=1}^m \mathbf{1}_{D_i\subseteq S'}=m-h(S')\geq m-(m-\ell)=\ell$.
	Suppose that $(i_1, i_2)\in E$ holds for some $i_1, i_2\in I$.
	Then, by the definitions of $D_{i_1}$, $D_{i_2}$ and $I$, we have $i_1\in D_{i_1}\subseteq S'$ and $i_1 +m \in \{j+m\,:\, (i_2,j)\in E\}\subseteq D_{i_2}\subseteq S'$.
	Thus, $i_1,i_1+m\in S'$ and $|S'\cap N_{i_1}| = |\{i_1, i_1+m\}|=2$, a contradiction with $|S'\cap N_{i_1}|\leq 1$.
	Therefore, for any $i_1, i_2\in I$, it follows $(i_1, i_2) \notin E$, and thus, $I$ is an independent set with $|I|\geq \ell$, yielding a yes answer to the \ISP.	
\end{proof}
\end{appendices}

\bibliography{shorttitles,gub_submodular}{}
\bibliographystyle{apalike}

\end{document}